\documentclass[lettersize,journal]{IEEEtran}

\usepackage{amsmath,amsfonts}
\usepackage{array}
\usepackage[caption=false,font=normalsize,labelfont=sf,textfont=sf]{subfig}

\usepackage{textcomp}
\usepackage{stfloats}
\usepackage{url}
\usepackage{verbatim}
\usepackage{graphicx}
\usepackage{balance}

\usepackage{amsmath,amssymb,amsfonts}
\usepackage{algorithm}
\usepackage{algorithmicx}
\usepackage{algpseudocode}
\usepackage{graphicx}
\usepackage{textcomp}
\usepackage{url}
\usepackage{color}
\usepackage{bm}

\usepackage{lipsum}
\usepackage{epstopdf}
\usepackage{booktabs}
\usepackage{amsopn}
\DeclareMathOperator{\diag}{diag}
\DeclareMathOperator{\prox}{prox}

\usepackage{amsthm}

\usepackage{enumerate}
\usepackage{cancel}
\usepackage{mathrsfs}
\usepackage{mathdots}
\usepackage{euscript}
\usepackage{amscd}
\usepackage{placeins}

\usepackage{multirow}
\usepackage{tikz}
\usetikzlibrary{arrows}
\usepackage{verbatim}

\newtheorem{theorem}{Theorem}

\newtheorem{lemma}{Lemma}
\newtheorem{proposition}{Proposition}
\newtheorem{assumption}{Assumption}

\theoremstyle{definition}
\newtheorem{definition}{Definition}

\theoremstyle{remark}
\newtheorem{remark}{Remark}

\newcommand{\nn}{\nonumber}

\newcommand{\bmat}{\left[ \begin{matrix}}
	\newcommand{\emat}{\end{matrix} \right]}
\newcommand{\innerprod}[2]{\langle{#1},\,{#2}\rangle}

\DeclareMathOperator{\trace}{tr}

\DeclareMathOperator{\argmax}{argmax}
\DeclareMathOperator{\argmin}{argmin}

\newcommand{\Rbb}{\mathbb R}

\newcommand{\Nbb}{\mathbb N}
\newcommand{\xb}{\mathbf  x}
\newcommand{\yb}{\mathbf  y}

\newcommand{\wb}{\mathbf  w}

\newcommand{\ub}{\mathbf  u}

\newcommand{\tb}{\mathbf t}

\newcommand{\zerob}{\mathbf 0}

\newcommand{\Ab}{\mathbf A}
\newcommand{\Bb}{\mathbf B}

\newcommand{\Db}{\mathbf D}

\newcommand{\Ib}{\mathbf I}
\newcommand{\Jb}{\mathbf J}

\newcommand{\Lb}{\mathbf L}

\newcommand{\Pb}{\mathbf P}
\newcommand{\Qb}{\mathbf Q}

\newcommand{\Sb}{\mathbf S}
\newcommand{\Tb}{\mathbf T}
\newcommand{\Ub}{\mathbf U}
\newcommand{\Vb}{\mathbf V}
\newcommand{\Yb}{\mathbf Y}
\newcommand{\Zb}{\mathbf Z}
\newcommand{\Xb}{\mathbf  X}
\newcommand{\Wb}{\mathbf W}


\newcommand{\mub}{\boldsymbol{\mu}}

\newcommand{\Gammab}{\boldsymbol{\Gamma}}
\newcommand{\Sigmab}{\boldsymbol{\Sigma}}

\newcommand{\Lambdab}{\boldsymbol{\Lambda}}
\newcommand{\Thetab}{\boldsymbol{\Theta}}
\newcommand{\Phib}{\boldsymbol{\Phi}}

\DeclareMathOperator{\sign}{sign}

\newcommand{\Dscr}{\mathscr{D}}

\newcommand{\Scal}{\mathcal{S}}
\newcommand{\Lcal}{\mathcal{L}}

\newcommand{\Dcal}{\mathcal{D}}

\newcommand{\Ncal}{\mathcal{N}}

\newcommand{\SNR}{\mathrm{SNR}}
\newcommand{\KL}{\mathrm{KL}}
\newcommand{\F}{\mathrm{F}}

\newcommand{\const}{\text{const.}}

\newcommand{\lyn}[1]{{\color{black} #1}}
\newcommand{\vrm}{\mathrm{v}}

\begin{document}
\title{$\ell_0$ Factor Analysis: A P-Stationary Point Theory}
\author{Linyang Wang, Bin Zhu, and Wanquan Liu
\thanks{The authors are with the School of Intelligent Systems Engineering, Sun Yat-sen University, Gongchang Road 66, 518107 Shenzhen, China (emails: \texttt{wangly227@mail2.sysu.edu.cn, \{zhub26, liuwq63\}@mail.sysu.edu.cn}).}
\thanks{This work was supported in part by Shenzhen Science and Technology Program (Grant No.~202206193000001-20220817184157001), the Fundamental Research Funds for the Central Universities, and the ‘‘Hundred-Talent Program’’ of Sun Yat-sen University.}}

\maketitle

\begin{abstract}
Factor Analysis is a widely used modeling technique for stationary time series which achieves dimensionality reduction by revealing a hidden low-rank plus sparse structure of the covariance matrix.	
Such an idea of parsimonious modeling has also been important in the field of systems and control.
In this article, a nonconvex nonsmooth optimization problem involving the $\ell_{0}$ norm is constructed in order to achieve the low-rank and sparse additive decomposition of the sample covariance matrix.
We establish the existence of an optimal solution and characterize these solutions via the concept of proximal stationary points. 
Furthermore, an ADMM algorithm is designed to solve the $\ell_{0}$ optimization problem, and a subsequence convergence result is proved under reasonable assumptions. 
Finally, numerical experiments demonstrate the effectiveness of our method in comparison with some alternatives in the literature.
\end{abstract}

\begin{IEEEkeywords}
Factor Analysis, low-rank plus sparse matrix decomposition, $\ell_0$ norm, nonconvex nonsmooth optimization, ADMM.
\end{IEEEkeywords}

\section{Introduction}

Factor Analysis (FA) is a classic topic which originates in psychology \cite{spearman1904general,ledermann1937rank,shapiro1982rank} and is studied in econometrics \cite{watson1983alternative,picci1989parametrization,forni2001generalized,bai2002determining} with modern developments in diverse fields such as system identification, signal processing, statistics, and machine learning, cf.~\cite{erson1993identification,heij1997system,deistler2007modelling,lam2012factor,bottegal2014modeling,bertsimas2017certifiably,zorzi2017sparse,ciccone2018factor,falconi2024robust} and the references therein. A standard model for FA can be found e.g., in \cite[Sec.~7.7]{hastie2015statistical} and reads as
\begin{equation}\label{generate_model}
	\yb_i = \mub + \Gammab \ub_i + \wb_i,\quad i=1,2,\dots,N,
\end{equation}
where 
\begin{itemize}
	\item $\yb_i\in\Rbb^p$ is an observed vector, 
	\item $\mub\in\Rbb^p$ is a mean vector, 
	\item $\Gammab\in\Rbb^{p\times r}$ is a deterministic ``factor loading'' matrix with linearly independently columns, 
	\item the random vector $\ub_i\sim \Ncal(0,\Ib_r)$ stands for the hidden factors, 
	\item and $\wb_i\sim \Ncal(0, \hat \Sb)$ is the additive noise with an \emph{unknown} covariance matrix $\hat \Sb$ independent of $\ub_i$.
\end{itemize}
The model is widely used for linear dimensionality reduction due to the typical situation that $r\ll p$. Suppose that we have obtained $N$ i.i.d.~samples $\yb_i$ from the model. 
\lyn{The problem is about estimating the symmetric matrix $\hat\Lb := \Gammab\Gammab^\top\in\Rbb^{p\times p}$ of rank $r$  and the noise covariance matrix $\hat\Sb$.} From the viewpoint of system identification, this is a \emph{static} identification problem in which the output $\yb_i$ depends solely on the current input $\ub_i$.

Assume for simplicity that the mean vector $\mub=\zerob$. According to the independence assumption, the covariance matrix of $\yb_i$ admits an additive decomposition as
\begin{equation}\label{Sigma_decomp}
	\Sigmab = \Gammab\Gammab^\top+\hat\Sb = \hat\Lb + \hat\Sb.
\end{equation}
In the special case where $\hat\Sb=\sigma^2 \Ib_p$, the problem reduces to the standard \emph{Principal Component Analysis} (PCA). A slightly relaxed assumption requires $\hat\Sb$ to be diagonal, as is typically done in FA. Then \eqref{Sigma_decomp} can be interpreted as a kind of ``low-rank plus sparse'' matrix decomposition. In practice, the true covariance matrix $\Sigmab$ is very often not available and instead must be estimated from the observed samples. For example, a usual averaging scheme gives
\begin{equation}\label{sample_cov}
	\check{\Sigmab} = \frac{1}{N} \sum_{i=1}^{N} \yb_i \yb_i^\top = \hat\Lb + \hat\Sb + \Wb,
\end{equation}
where $\Wb$ is a residual matrix. The FA problem now becomes estimating the low-rank $\hat\Lb$ and the sparse $\hat\Sb$ from the noisy estimate $\check{\Sigmab}$ of the covariance matrix.

Following this type of thinking, there has been extensive research on the so-called \emph{robust PCA}, see e.g., \cite{candes2011robust,chandrasekaran2011rank,hsu2011robust,agarwal2012noisy,wen2019robust,chen2021bridging}, where convex optimization methods are employed to separate the low-rank matrix from the sparse one, and techniques from compressed sensing are used to establish strong recovery guarantees.
Applications include foreground/background extraction in video surveillance, and face recognition, etc.
However, we must point out that these works mostly utilize the $\ell_1$ norm on the component $\Sb$ in order to enforce sparsity, while the most natural choice is the \emph{$\ell_0$ norm}\footnote{Notice that the $\ell_0$ norm is \emph{not} a \emph{bona fide} norm as it obviously violates the absolute homogeneity. It is still termed ``norm'' in this paper for simplicity.} which counts the number of nonzero entries in a matrix. Moreover, they use the squared Frobenius norm to quantify the mismatch between candidate $\Lb+\Sb$ and the estimate $\check{\Sigmab}$. In this way, no special attention is paid to covariance matrices, and even rectangular matrices are allowed.

In the recent paper \cite{ciccone2018factor}, the FA problem is formulated via constrained convex optimization involving the \emph{Kullback--Leibler (KL) divergence} between two positive definite matrices.
The KL divergence here is in fact a special case of the object with the same name, and can be interpreted as a pseudodistance between two zero-mean multivariate normal distributions where the two positive definite matrices are the covariance matrices, see e.g., \cite{LP15}.
It has been extensively used in information theory, statistics \cite{cover2005elements}, and system identification, notably in spectral estimation \cite{Georgiou-L-03,PavonF-06,FPR-08,FRT-11} and graphical model learning \cite{songsiri2010topology,avventi2013arma,zorzi2015ar,zorzi2017sparse,ciccone2020learning,alpago2022scalable,you2022generalized,alpago2023data,you2024sparse}. Motivated by the work in \cite{ciccone2018factor}, in the current paper we shall remove the usual constraint for $\Sb$ to be diagonal, and instead require $\Sb$ to be sparse as measured by the $\ell_0$ norm. The resulting optimization problem is, however, nonconvex and nonsmooth due to the discrete nature of the $\ell_0$ norm. Drawing inspirations from recent developments in the $\ell_0$ sparse optimization, cf.~e.g., \cite{nikolova2013description,marjanovic2015l0,pan2015solutions,bertsimas2017certifiably,zhang2021optimality,zhou2021newton,zhou2021global,zhou2021quadratic,wang2021support,shi2023admm,bertsimas2023sparse}, we approach the problem via the theory of \emph{proximal stationary points} which can be viewed as a nonsmooth version of the first-order Karush-Kuhn-Tucker (KKT) conditions. Then the optimality conditions are integrated into an Alternating Direction Method of Multipliers (ADMM) for the numerical solution of the optimization problem, and a subsequence convergence result is provided under certain assumptions.
Simulations on synthetic and real data show that our method is able to find the number of hidden factors, i.e., the rank of $\hat \Lb$, in a robust way.

\subsection{Organization}

The remainder of our paper is organized as follows.
Using the $\ell_{0}$ norm in conjunction with the KL divergence, Section \ref{sec:prob} constructs the nonconvex optimization model for low-rank plus sparse matrix decomposition and establishes the existence of a solution. Section \ref{sec:optimal} provides the optimality theory that connects an optimal solution to a P-stationary point. Section~\ref{sec:alg} presents an ADMM algorithm to find a solution and its local convergence analysis. Section \ref{sec:numerical} conducts numerical experiments to assess the effectiveness of the proposed algorithm through its application to both synthetic and real datasets. Finally, we conclude this work in Section \ref{sec:conc}.

\subsection{Notation}
Throughout this article, we use bold uppercase letters and bold lowercase letters to represent matrices and vectors, respectively. For two square matrices $\Ab=[a_{ij}]$ and $\Bb=[b_{ij}]$ of the same size, we write the inner product as $\innerprod{\Ab}{\Bb} := \trace(\Ab\Bb^\top)$. The symbols $(\cdot)^\top$ and $\|\cdot\|_\F$ denote transpose and the Frobenius norm, respectively. The $\ell_0$ norm $\|\cdot\|_0$ counts the number of nonzero entries in a matrix or a vector. 
If $\Ab$ is positive semidefinite, we write $\Ab\succeq 0$, and $\Ab\succ 0$ means that $\Ab$ is positive definite.

\section{Problem Formulation and Existence of a Solution}\label{sec:prob}

In this paper, we mainly consider the following optimization problem:
\begin{subequations}\label{opt_formulation}
	\begin{align}
		& \underset{\Sigmab, \mathbf{L}, \mathbf{S}}{\operatorname{min}} & & \operatorname{tr} (\mathbf{L})+C\|\mathbf{S}\|_0+\mu \mathcal{D}_{\mathrm{KL}}(\mathbf{\Sigma}\| \lyn{\check{\mathbf{\Sigma}}}) \label{obj_func} \\
		& \ \ \text{s.t.} & &\mathbf{L}\succeq 0, \label{constraint_positive_L} \\
		& & & \mathbf{S} \succeq 0, \label{constraint_positive_S} \\
		& & & \mathbf{\Sigma}=\mathbf{L}+\mathbf{S}\succ 0  \label{constraint_additive},
	\end{align}
\end{subequations}
where $\Lb$, $\Sb$, $\Sigmab$ and the sample covariance matrix $\check{\Sigmab}$ are all square of size $p$,  
$$\Dcal_{\KL}(\Sigmab||\check{\Sigmab}) := \log\det(\Sigmab^{-1} \check{\Sigmab}) + \trace(\Sigmab\check{\Sigmab}^{-1}) -p$$
is the KL divergence between two positive definite matrices, and $C$ and $\mu$ are tuning parameters. The first term $\trace(L)$ in the objective function is equal to the \emph{nuclear norm} $\|L\|_\star$ which is a convex surrogate of the rank function \cite{fazel2002matrix}. 
Throughout this article, we assume that $\check{\Sigmab}\succ 0$ so that we can take its inverse in the definition of the KL divergence.
To get a more concise optimization model, we substitute the variable $\Sigmab$ in \eqref{opt_formulation} with $\Lb+\Sb$.
Then we get the following equivalent form
\begin{subequations}\label{optim_model_2}
	\begin{align}
		& \underset{\mathbf{L},\, \mathbf{S}}{\min} & & F(\mathbf{L},\mathbf{S}) := f(\mathbf{L},\mathbf{S})+ C\Vert \mathbf{S}\Vert _0 \\
		& \ \text{s.t.} & & (\Lb, \Sb)\in\Dscr, \label{feasi_constraint}
	\end{align}
\end{subequations}
where the feasible set
\begin{equation}\label{feasible_set}
	\Dscr:=\{(\Lb, \Sb) : \Lb\succeq 0,\ \Sb\succeq 0,\ \text{and}\ \Lb+\Sb\succ 0 \},
\end{equation}
and
\begin{subequations}\label{f_specific}
	\begin{align}
		f(\mathbf{L},\mathbf{S}) & := \trace(\mathbf{L}) +
		\mu \left[\mathcal{D}_{\mathrm{KL}}(\Lb+\Sb\| \check{\mathbf{\Sigma}}) \underbrace{-\log\det\check{\Sigmab}+p}_{\const} \right] \label{f_struc_01} \\
		& =\trace(\mathbf{L})+\mu\left\{ \operatorname{tr}\left[(\mathbf{L}+\mathbf{S}) \check{\mathbf{\Sigma}}^{-1}\right] - \log \operatorname{det}(\mathbf{L}+\mathbf{S})\right\}. \label{f_struc_02}
	\end{align}
\end{subequations}

\begin{remark}
	The paper \cite{ciccone2018factor} considered the following optimization problem for FA:
	\begin{equation}\label{opt_CFZ}
	\begin{aligned}
	& \underset{\Sigmab,\, \Lb,\, \Sb}{\text{min}}
	& & \trace(\Lb) \\
	& \ \ \text{s.t.}
	& & \Lb, \Sb \succeq 0 \\
	& & & \Sb \text{ is diagonal} \\
	&&& \Sigmab = \Lb + \Sb \succ 0 \\
	&&& \Dcal_{\KL}(\Sigmab||\check{\Sigmab})\leq \delta/2.
	\end{aligned}
	\end{equation}
    The idea is to utilize the KL divergence to constrain the candidate covariance matrix $\Sigmab$ in a ``neighborhood'' of $\check{\Sigmab}$ controlled by the tuning parameter $\delta$.
	The interest in this constrained version instead of the regularized version similar to \eqref{opt_formulation} lies in the fact that a probabilistic strategy can be adopted to select the parameter $\delta$ via analysis with the \emph{Gaussian Orthogonal Ensemble}, so that ad-hoc procedures for parameter selection such as the cross validation is not needed. We assume that the computational power does not pose serious limitations for us, and hence do not insist on this constrained version in the current work.
\end{remark}

	It is worth noting that the nonconvexity of the objective function $F(\mathbf{L},\mathbf{S})$ is solely attributed to the term $\|\Sb\|_0$.

More precisely, we have the next result.
	
	\begin{proposition}\label{prop_strict_convex}
		The function $f$ in \eqref{f_specific} is smooth and jointly strictly convex in $(\Lb,\Sb)$.
	\end{proposition}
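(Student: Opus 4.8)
The plan is to split $f$ into a linear part plus a spectral part and to handle smoothness and (strict) convexity on the open domain $\Omega := \{(\Lb,\Sb): \Lb+\Sb \succ 0\} \supseteq \Dscr$. Writing $\Sigmab = \Lb+\Sb$, I would first record that $\trace(\Lb) + \mu\,\trace[(\Lb+\Sb)\check{\Sigmab}^{-1}]$ is linear in $(\Lb,\Sb)$, hence $C^\infty$, while the remaining term is the composition of the linear map $A:(\Lb,\Sb)\mapsto \Lb+\Sb$ with $\Sigmab \mapsto -\mu\log\det\Sigmab$. Since $A$ sends $\Omega$ into the open positive definite cone, on which $\log\det$ is $C^\infty$, the composition is smooth; smoothness of $f$ on $\Omega$ follows at once.

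For convexity I would pass to the second differential, exploiting that the linear part has vanishing Hessian. Using the standard identity $D^2(-\log\det)(\Sigmab)[\Hb,\Hb] = \trace(\Sigmab^{-1}\Hb\Sigmab^{-1}\Hb) = \|\Sigmab^{-1/2}\Hb\Sigmab^{-1/2}\|_\F^2$ together with the chain rule for $A$, the Hessian quadratic form of $f$ at a feasible point in a direction $(\Hb_L,\Hb_S)$ becomes
\[
 Q(\Hb_L,\Hb_S) \;=\; \mu\,\big\|\Sigmab^{-1/2}(\Hb_L+\Hb_S)\Sigmab^{-1/2}\big\|_\F^2, \qquad \Sigmab = \Lb+\Sb \succ 0.
\]
This is manifestly nonnegative, which already yields joint convexity, and the goal is to promote it to strict positivity for every nonzero direction $(\Hb_L,\Hb_S)$.

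The substantive step — and the one I expect to be the main obstacle — is this strict positivity of $Q$. Because $\Sigmab^{-1/2}$ is invertible, $Q(\Hb_L,\Hb_S)=0$ precisely when $\Hb_L+\Hb_S = \zerob$, i.e.\ exactly along the ``sum-preserving'' directions on which the $\log\det$ curvature is blind. Along such directions $f$ collapses to the linear term $\trace(\Lb)$, whose contribution to the second-order form is zero, so the strict inequality cannot be read off from the Hessian alone. The crux is therefore to argue strict convexity directly from the definition along these degenerate directions, combining the strict curvature of $-\log\det$ in the effective variable $\Sigmab$ with the behaviour of $\trace(\Lb)$ on directions that leave $\Lb+\Sb$ fixed; this interaction between $\Sigmab$ and the linear term is the delicate point on which the claimed joint strictness ultimately rests, and is where I would concentrate the care of the argument.
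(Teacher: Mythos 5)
Your smoothness argument and Hessian computation are both correct, and your identification of the kernel directions $\Hb_L+\Hb_S=\zerob$ is exactly the right place to worry. But the step you defer --- promoting convexity to \emph{joint strict} convexity along those directions --- is not merely delicate; it is impossible, because along them $f$ is exactly affine. For any nonzero symmetric $\Hb$,
\begin{equation*}
f(\Lb+t\Hb,\ \Sb-t\Hb)\;=\;f(\Lb,\Sb)+t\,\trace(\Hb),
\end{equation*}
since $\Lb+\Sb$ (hence the KL term) is unchanged while $\trace(\cdot)$ is linear. Taking, say, $\Lb=\Sb=\Ib$ and $|t|$ small keeps both endpoints of the segment inside $\Dscr$, and the midpoint value of $f$ equals the average of the endpoint values exactly, violating strict convexity. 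So no argument ``from the definition'' can close your gap: what is actually true --- and what your formula for $Q$ already proves --- is that $f$ is smooth on $\{\Lb+\Sb\succ 0\}$, jointly convex, and strictly convex precisely in the effective variable $\Sigmab=\Lb+\Sb$, i.e.\ along every direction with $\Hb_L+\Hb_S\neq\zerob$.

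For comparison, the paper's own proof is a one-liner: it writes $f(\Lb,\Sb)=h(\Lb)+\mu g(\Lb+\Sb)+\const$ with $g=\Dcal_{\KL}(\cdot\,\|\check{\Sigmab})$ strictly convex and asserts the claim ``follows directly,'' with strict convexity reduced to ``definition checking.'' That assertion silently assumes strictness survives precomposition with the linear map $(\Lb,\Sb)\mapsto\Lb+\Sb$, which fails exactly because this map has the nontrivial kernel you found; definition checking gives equality, not strict inequality, whenever $\Lb^1+\Sb^1=\Lb^2+\Sb^2$. In other words, your blind attempt is more careful than the paper's proof and in effect refutes Proposition~\ref{prop_strict_convex} as literally stated. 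The damage downstream is limited: Theorem~\ref{thm_exist} uses only lower semicontinuity and coercivity, and the local-optimality argument in Theorem~\ref{thm_optimality} invokes only the first-order inequality \eqref{local_inequal_01} of plain convexity, so weakening the statement to ``jointly convex and strictly convex in $\Lb+\Sb$'' would leave the rest of the paper intact. The right fix to your write-up is therefore not to ``concentrate care'' on the degenerate directions but to state and prove that corrected, weaker claim.
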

\begin{proof}
	The additive structure of $f$ in \eqref{f_specific} can be simplified as $f(\Lb,\Sb) = h(\Lb) + \mu g(\Lb+\Sb)+\const$, where $h(\Lb)=\trace(\Lb)$ is linear and $g(\cdot) = \Dcal_{\KL}(\cdot\,\| \check{\Sigmab})$ is smooth and strictly convex \cite{LP15}. The claim of the proposition follows directly from the fact that the term $g(\Lb+\Sb)$ is smooth and strictly convex. The smoothness is trivial as long as $\Lb+\Sb\succ0$. The strict convexity reduces simply to definition checking.
\end{proof}

\begin{remark}
	\lyn{The problem formulation in \cite{ciccone2018factor} imposes the diagonal constraint on the sparse structure of $\Sb$ which can be understood as prior information about the noise. In the absence of such prior information, our formulation \eqref{opt_formulation}, which only requires $\Sb$ to be sparse, is expected to be more flexible and realistic.}
\end{remark}

The next result concerns the existence of a solution to \eqref{optim_model_2} whose proof needs some additional lemmas.

\begin{theorem}\label{thm_exist}
	The optimization problem \eqref{optim_model_2} admits a solution in $\Dscr$ and the set of all minimizers is bounded.	
\end{theorem}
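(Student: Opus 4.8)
The plan is to use the \emph{direct method} of the calculus of variations, adapting it to two nonstandard features: the discontinuity of the $\ell_0$ term, and the fact that $\Dscr$ is not closed because of the open constraint $\Lb+\Sb\succ 0$. First I would verify the problem is well posed. The pair $(\Lb,\Sb)=(\check{\Sigmab},\zerob)$ lies in $\Dscr$, so the feasible set is nonempty; and rewriting $F$ via \eqref{f_struc_01} as $F=\trace(\Lb)+\mu\,\Dcal_{\KL}(\Lb+\Sb\|\check{\Sigmab})+\mu(p-\log\det\check{\Sigmab})+C\|\Sb\|_0$, the nonnegativity of $\trace(\Lb)$, of $\Dcal_{\KL}$, and of $\|\Sb\|_0$ shows that $F$ is bounded below by the finite constant $\mu(p-\log\det\check{\Sigmab})$. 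Hence $F^\ast:=\inf_{\Dscr}F$ is finite, and I may fix a minimizing sequence $(\Lb_k,\Sb_k)\in\Dscr$ with $F(\Lb_k,\Sb_k)\to F^\ast$.

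The analytic heart is a coercivity estimate drawn from the explicit form \eqref{f_struc_02}. Setting $\lambda:=\lambda_{\min}(\check{\Sigmab}^{-1})>0$ and $t:=\trace(\Lb+\Sb)$, the bound $\trace(A\check{\Sigmab}^{-1})\ge\lambda\,\trace(A)$ for $A\succeq 0$ together with the AM--GM inequality $\log\det(\Lb+\Sb)\le p\log(t/p)$ gives, after discarding $\trace(\Lb)\ge 0$,
\[
F(\Lb,\Sb)\ \ge\ \mu\big[\lambda\,t-p\log(t/p)\big].
\]
The right-hand side diverges as $t\to\infty$ because the linear term dominates the logarithm; and since $\Lb,\Sb\succeq 0$ yield $\|\Lb\|_\F\le\trace(\Lb)\le t$ and $\|\Sb\|_\F\le\trace(\Sb)\le t$, this proves $F$ is coercive on $\Dscr$. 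In particular the minimizing sequence is bounded.

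The delicate step is controlling the open constraint $\Lb+\Sb\succ 0$, and here the $-\log\det$ barrier does the work. As the sequence is bounded, the eigenvalues of $\Lb_k+\Sb_k$ are bounded above by some $M$. If $\det(\Lb_k+\Sb_k)$ were not bounded away from $0$, then along a subsequence $-\log\det(\Lb_k+\Sb_k)\to+\infty$; since $\trace(\Lb_k)\ge 0$ and $\trace[(\Lb_k+\Sb_k)\check{\Sigmab}^{-1}]\ge 0$, this would force $F(\Lb_k,\Sb_k)\to+\infty$ through \eqref{f_struc_02}, contradicting $F(\Lb_k,\Sb_k)\to F^\ast$. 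Hence $\det(\Lb_k+\Sb_k)\ge\eta>0$ uniformly, which with the upper bound $M$ produces a uniform floor $\Lb_k+\Sb_k\succeq\delta\Ib$, $\delta=\eta/M^{p-1}>0$. Extracting a convergent subsequence $(\Lb_k,\Sb_k)\to(\Lb^\ast,\Sb^\ast)$ by Bolzano--Weierstrass, closedness of the PSD cone gives $\Lb^\ast,\Sb^\ast\succeq 0$, while the uniform floor passes to the limit as $\Lb^\ast+\Sb^\ast\succeq\delta\Ib\succ 0$; thus $(\Lb^\ast,\Sb^\ast)\in\Dscr$.

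Finally I would pass to the limit in the objective. Because the iterates stay in the region $\{\,\cdot\,\succeq\delta\Ib\}$ where $f$ is smooth (Proposition \ref{prop_strict_convex}), continuity gives $f(\Lb_k,\Sb_k)\to f(\Lb^\ast,\Sb^\ast)$, while the lower semicontinuity of the $\ell_0$ norm gives $\|\Sb^\ast\|_0\le\liminf_k\|\Sb_k\|_0$ (any nonzero limiting entry is nonzero along the tail). Combining,
\[
F(\Lb^\ast,\Sb^\ast)=f(\Lb^\ast,\Sb^\ast)+C\|\Sb^\ast\|_0\ \le\ \liminf_k F(\Lb_k,\Sb_k)=F^\ast,
\]
so $(\Lb^\ast,\Sb^\ast)$ attains the infimum and is a solution. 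For the boundedness claim, every minimizer lies in the sublevel set $\{(\Lb,\Sb)\in\Dscr:F\le F^\ast\}$, which is bounded by the coercivity estimate of the second paragraph. I expect the only genuine obstacle to be the joint treatment of the open constraint $\Lb+\Sb\succ 0$ and the $\ell_0$ discontinuity; the barrier term and lower semicontinuity dispose of these respectively.
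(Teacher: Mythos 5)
Your proof is correct, and strategically it is the same direct-method argument as the paper's: both rest on the same three pillars --- $F$ blows up along unbounded feasible sequences, $F$ blows up as $\Lb+\Sb$ approaches singularity, and $\|\cdot\|_0$ is lower semicontinuous --- and both finish by extracting a convergent (sub)sequence. The genuine differences are in execution. The paper packages the three pillars as compactness of a sublevel set (Lemmas~\ref{lem_diverg} and \ref{lem_boundary} together with Lemma~\ref{lem_lower_semiconti}) and then invokes the Weierstrass extreme value theorem, whereas you run a minimizing sequence directly. More substantively, the paper proves its blow-up lemmas by a case split: when $\|\Lb^k\|_\F\to\infty$ it uses $\trace(\Lb^k)\to\infty$ plus nonnegativity of the KL term, and when $\|\Sb^k\|_\F\to\infty$ (or at the singular boundary) it spectrally decomposes $\Sigmab^k$ against $\check{\Sigmab}$ and analyzes the summands $c_i\lambda_{k,i}-\log\lambda_{k,i}$. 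You replace all of this with two scalar inequalities --- $\trace(\Ab\check{\Sigmab}^{-1})\ge\lambda_{\min}(\check{\Sigmab}^{-1})\,\trace(\Ab)$ for $\Ab\succeq 0$, and the AM--GM bound $\log\det(\Lb+\Sb)\le p\log(t/p)$ --- yielding the single coercivity estimate $F\ge\mu\left[\lambda t-p\log(t/p)\right]$ in the variable $t=\trace(\Lb+\Sb)$ with $\lambda=\lambda_{\min}(\check{\Sigmab}^{-1})$, which covers both unboundedness cases at once; likewise your barrier step produces the quantitative eigenvalue floor $\Lb_k+\Sb_k\succeq(\eta/M^{p-1})\Ib$ rather than a qualitative contradiction. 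What your route buys is explicitness and a unified estimate with concrete constants. What the paper's route buys is reuse: the compact sublevel set constructed in its proof is cited again in Proposition~\ref{assumption_satisifies} to obtain the Lipschitz-gradient property of Assumption~\ref{assump_grad_Lipschitz}, so the sublevel-set formulation does double duty downstream.
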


\begin{lemma}\label{lem_lower_semiconti}
	The objective function $F(\Lb,\Sb)$ in \eqref{optim_model_2} is lower semicontinuous.
\end{lemma}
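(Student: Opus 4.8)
The plan is to exploit the additive decomposition $F = f + C\|\Sb\|_0$ and treat the two summands separately. By Proposition~\ref{prop_strict_convex}, the function $f$ is smooth, hence continuous, on the feasible set $\Dscr$, where the condition $\Lb+\Sb\succ 0$ guarantees that $-\log\det(\Lb+\Sb)$ stays finite. Continuity is in particular lower semicontinuity, so the entire burden falls on the discontinuous term $C\|\Sb\|_0$. Since $C\ge 0$ is fixed and a nonnegative multiple of a lower semicontinuous function is again lower semicontinuous, it suffices to prove that the $\ell_0$ norm $\Sb\mapsto\|\Sb\|_0$ is lower semicontinuous, after which the result follows from the fact that a sum of two lower semicontinuous functions is lower semicontinuous.

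The core step, and the only place genuinely requiring an argument, is the lower semicontinuity of $\|\cdot\|_0$. I would establish this entrywise. Writing $\|\Sb\|_0 = \sum_{i,j}\mathbf{1}\{[\Sb]_{ij}\neq 0\}$, a finite sum over the $p\times p$ entries, I would note that each scalar indicator $t\mapsto\mathbf{1}\{t\neq 0\}$ is lower semicontinuous: it equals $0$ at $t=0$ and $1$ elsewhere, so at $t=0$ one has $\liminf_{s\to 0}\mathbf{1}\{s\neq 0\}=0$, matching the value there, while at any $t\neq 0$ the indicator is locally constant and equal to $1$. A finite sum of lower semicontinuous functions is lower semicontinuous, which yields the claim for $\|\cdot\|_0$.

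Equivalently and more directly in sequential form, for any $(\Lb_k,\Sb_k)\to(\Lb,\Sb)$ I would argue that every index $(i,j)$ with $[\Sb]_{ij}\neq 0$ satisfies $[\Sb_k]_{ij}\neq 0$ for all sufficiently large $k$, by convergence of the individual entries; hence the support of $\Sb$ is contained in that of $\Sb_k$ for large $k$, giving $\|\Sb_k\|_0\ge\|\Sb\|_0$ eventually and therefore $\liminf_k\|\Sb_k\|_0\ge\|\Sb\|_0$. Combined with the continuity of $f$, this yields $\liminf_k F(\Lb_k,\Sb_k)\ge F(\Lb,\Sb)$, which is lower semicontinuity of $F$.

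I would flag one point of care rather than a true difficulty: lower semicontinuity must be asserted relative to the correct domain. Since $f$ is finite only where $\Lb+\Sb\succ 0$, the natural reading is that $F$ is lower semicontinuous on $\Dscr$ (or, equivalently, on the whole space of symmetric matrix pairs under the convention $F\equiv+\infty$ off $\Dscr$, which is consistent with the blow-up $-\log\det(\Lb+\Sb)\to+\infty$ as $\Lb+\Sb$ approaches singularity, and is exactly the form convenient for the existence argument in Theorem~\ref{thm_exist}). No real obstacle arises: the $\ell_0$ term carries all the subtlety, its lower semicontinuity is elementary, and the conclusion then drops out of the sum rule.
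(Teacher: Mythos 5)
Your proposal is correct and follows essentially the same route as the paper: reduce to the lower semicontinuity of $\|\cdot\|_0$ via the continuity of $f$, then observe that every nonzero entry of $\Sb$ remains nonzero in any sufficiently close matrix, so that $\|\Sb_k\|_0 \geq \|\Sb\|_0$ eventually. Your entrywise indicator decomposition and the remark about stating lower semicontinuity relative to $\Dscr$ are slight elaborations of the paper's argument, not a different approach.
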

\begin{proof}
	In view of Proposition~\ref{prop_strict_convex}, we only need to show that the $\ell_{0}$ norm $\|\Sb\|_0$ is lower semicontinuous. Let $\Sb\succeq 0$ be arbitrary. By the definition of lower semicontinuity, we have to establish the fact that for any $\varepsilon>0$, there exists $\delta>0$ such that
	\begin{equation}\label{inequal_semi_conti}
	\|\Tb-\Sb\|_\F<\delta \implies \|\Tb\|_0>\|\Sb\|_0-\varepsilon.
	\end{equation}
	Indeed, when $\delta$ is small enough, all the nonzero entries of $\Sb$ must remain nonzero in $\Tb$, and we have
	\begin{equation}\label{inequal_S_zero_norm}
	\|\Tb\|_0\geq \|\Sb\|_0.
	\end{equation} 
	\emph{A fortiori}, \eqref{inequal_semi_conti} must hold and the proof is complete.	
\end{proof}

\begin{lemma}\label{lem_diverg}
	Suppose that there is a sequence $\{(\Lb^k,\Sb^k)\}_{k\geq 1}\subset \Dscr$ such that $\|\Lb^k\|_\F$ or $\|\Sb^k\|_\F$ tends to infinity as $k\to\infty$. Then the sequence of function values $\{F(\Lb^{k},\Sb^{k})\}_{k\geq 1}$ also goes unbounded.	
\end{lemma}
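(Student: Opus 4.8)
The plan is to establish a coercivity property: I will show that $F(\Lb^k,\Sb^k)\to+\infty$, which in particular forces the sequence of values to be unbounded. The starting point is the explicit form \eqref{f_struc_02}, which lets me write
\[
F(\Lb^k,\Sb^k)=\trace(\Lb^k)+C\|\Sb^k\|_0+\mu\bigl[\trace(\Sigmab^k\check{\Sigmab}^{-1})-\log\det\Sigmab^k\bigr]+\const,
\]
where $\Sigmab^k:=\Lb^k+\Sb^k\succ0$. Since $\Lb^k\succeq0$ and the $\ell_0$ norm is nonnegative, the first two terms are nonnegative and may be dropped in a lower bound (assuming as usual $C,\mu>0$). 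Thus the whole problem reduces to controlling the bracketed KL term from below in terms of the ``size'' of $\Sigmab^k$.

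First I would translate the hypothesis into a statement about traces. For a positive semidefinite matrix the eigenvalues are nonnegative, so $\|\cdot\|_\F\le\trace(\cdot)$; hence if $\|\Lb^k\|_\F$ or $\|\Sb^k\|_\F$ diverges, then $\trace(\Lb^k)$ or $\trace(\Sb^k)$ diverges. Because both traces are nonnegative and $\trace(\Sigmab^k)=\trace(\Lb^k)+\trace(\Sb^k)$, this yields $\trace(\Sigmab^k)\to\infty$ in either case. Next, using the standard inequality $\trace(\Sigmab^k\check{\Sigmab}^{-1})\ge\lambda_{\min}(\check{\Sigmab}^{-1})\,\trace(\Sigmab^k)$, valid since $\Sigmab^k\succeq0$, and writing $c:=\lambda_{\min}(\check{\Sigmab}^{-1})=1/\lambda_{\max}(\check{\Sigmab})>0$ together with the spectral decomposition $\sigma_1^k\ge\cdots\ge\sigma_p^k>0$ of $\Sigmab^k$, I bound the bracket below by
\[
\sum_{i=1}^p\bigl(c\,\sigma_i^k-\log\sigma_i^k\bigr)=\sum_{i=1}^p\phi(\sigma_i^k),\qquad \phi(\sigma):=c\,\sigma-\log\sigma .
\]

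The remaining step is the scalar analysis of $\phi$ on $(0,\infty)$. The function $\phi$ attains a finite global minimum $\phi_{\min}=1+\log c$ at $\sigma=1/c$, so every summand satisfies $\phi(\sigma_i^k)\ge\phi_{\min}$, and moreover $\phi(\sigma)\to+\infty$ as $\sigma\to\infty$. Since $\trace(\Sigmab^k)\to\infty$ forces the largest eigenvalue $\sigma_1^k\ge\trace(\Sigmab^k)/p$ to diverge, I estimate $\sum_i\phi(\sigma_i^k)\ge\phi(\sigma_1^k)+(p-1)\phi_{\min}\to+\infty$. Combined with the discarded nonnegative terms this gives $F(\Lb^k,\Sb^k)\ge\mu\sum_i\phi(\sigma_i^k)+\const\to+\infty$, as claimed. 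The one genuinely delicate point is that the concave penalty $-\log\det\Sigmab^k$ could a priori run off to $-\infty$ and cancel the growth coming from the trace; the crux of the argument is precisely that the \emph{linear} term $\trace(\Sigmab^k\check{\Sigmab}^{-1})$ dominates this logarithm, and packaging the comparison eigenvalue-by-eigenvalue through $\phi$—with the uniform floor $\phi_{\min}$ absorbing any eigenvalues that drift toward $0$—is what makes the domination rigorous.
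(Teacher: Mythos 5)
Your proof is correct, and its core is the same as the paper's: dominate $-\log\det\Sigmab^k$ by the linear term $\trace(\Sigmab^k\check{\Sigmab}^{-1})$ eigenvalue-by-eigenvalue through the scalar function $\phi(\sigma)=c\sigma-\log\sigma$, whose uniform floor $1+\log c$ absorbs eigenvalues drifting toward zero while divergence of the top eigenvalue drives the sum to $+\infty$. Where you differ is in the packaging, and your version is tighter in two respects. First, the paper splits into two cases: when $\|\Lb^k\|_\F\to\infty$ it argues simply via $\trace(\Lb^k)\to\infty$ plus nonnegativity of the KL divergence, and only when $\|\Sb^k\|_\F\to\infty$ does it perform the eigenvalue analysis; you handle both hypotheses at once by showing $\trace(\Sigmab^k)\to\infty$ (using $\|\cdot\|_\F\le\trace(\cdot)$ on the positive semidefinite cone) and then running a single coercivity estimate. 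Second, the paper's analysis passes through the matrix $\Xb=\hat\Lambdab^{-1/2}\hat{\Ub}^\top\Ub^k$ and the $k$-dependent constants $c_i$ (squared column norms of $\Xb$), asserting without detail that these are bounded away from zero ``because of the structure of $\Xb$''; your inequality $\trace(\Sigmab^k\check{\Sigmab}^{-1})\ge\lambda_{\min}(\check{\Sigmab}^{-1})\,\trace(\Sigmab^k)$ replaces those per-column constants by the single uniform constant $c=1/\lambda_{\max}(\check{\Sigmab})$, which makes the lower bound explicit and tidies up the one loosely justified step in the paper's argument (indeed, each $c_i\ge\lambda_{\min}(\hat\Lambdab^{-1})=c$, so your bound is exactly what underlies the paper's claim). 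In exchange, the paper's first case is marginally more elementary, but your unified estimate delivers the stronger conclusion $F(\Lb^k,\Sb^k)\to+\infty$ cleanly in all cases, which is also what the subsequent compactness argument in Theorem~\ref{thm_exist} actually uses.
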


\begin{proof}
	Since the term $\|\Sb\|_0$ always remains bounded, we only need to show the divergence of the sequence $\{f(\Lb^k, \Sb^k)\}_{k\geq 1}$. Suppose first that $\|\Lb^k\|_\F\to\infty$ as $k\to\infty$. Then the largest eigenvalue of $\Lb^k$ must also tends to infinity due to norm equivalence. Consequently, we have $\trace (\Lb^k)\to \infty$. For other terms in \eqref{f_struc_01}, we know $\Dcal_{\KL}(\Lb^k+\Sb^k\|\check{\Sigmab})\geq 0$ (bounded from below), see e.g., \cite{LP15}, and the rest are constants. Hence $f(\Lb^k,\Sb^k)$ necessarily diverges.
	
	Next suppose that $\|\Sb^k\|_\F\to\infty$ as $k\to\infty$. Let us write $\Sigmab^k=\Lb^k+\Sb^k$ for simplicity. Obviously, in this case the largest eigenvalue of $\Sigmab^k$ must tend to infinity. According to \eqref{f_struc_02}, we only need to compare the two terms in the brace, namely $\trace(\Sigmab^k\check{\Sigmab}^{-1}) - \log\det\Sigmab^k$. We shall use the spectral decomposition $\Sigmab^k=\Ub^k\Lambdab^k(\Ub^k)^\top$ where $\Ub^k$ is orthogonal and $\Lambdab^k=\diag\{\lambda_{k,1},\dots, \lambda_{k,p}\}$ is the diagonal matrix of eigenvalues.
	Likewise, we have $\check{\Sigmab}=\hat\Ub\hat\Lambdab\hat\Ub^\top$. Then it is not difficult to derive that
    \begin{subequations}\label{trace_minus_logdet}
	\begin{align}
		\trace(\Sigmab^k\check{\Sigmab}^{-1}) - \log\det\Sigmab^k & = \trace(\Xb\Lambdab^k\Xb^\top) -\log\det\Sigmab^k \nn \\
	 & = \sum_{i = 1}^p (c_i\lambda_{k,i}-\log\lambda_{k,i}), \label{sum_compare}
	\end{align}
	\end{subequations}
	where $\Xb=\hat\Lambdab^{-1/2}\hat{\Ub}^\top\Ub^k$, and $c_i$ is the norm squared of the $i$-th column of $\Xb$. Clearly, each $c_i$ is bounded away from zero because of the structure of $\Xb$. Consider the function $cx-\log x$ defined for $x>0$ with a parameter $c>0$. It is strictly convex on the positive semiaxis and has a minimum value of $1+\log c$ at $x=1/c$. Consequently, each summand in \eqref{sum_compare} is bounded from below and $c_1\lambda_{k,1}-\log\lambda_{k,1}\to\infty$ because $\lambda_{k,1}\to \infty$ as argued above.
	Therefore, $f(\Lb^k,\Sb^k)$ must also diverge in this case.
\end{proof}

\begin{lemma}\label{lem_boundary}
	Suppose that we have a convergent sequence $\{(\Lb^k, \Sb^k)\}_{k\geq 1}\subset \Dscr$ which tends to some $(\tilde\Lb, \tilde\Sb)$ as $k\to \infty$ such that $\tilde{\Lb}+\tilde{\Sb}\succeq 0$ but is singular. Then the sequence of function values $\{F(\Lb^{k},\Sb^{k})\}_{k\geq 1}$ tends to infinity.	
\end{lemma}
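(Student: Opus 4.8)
The plan is to reduce everything to the behavior of the log-determinant term, which is the only piece of $F$ that can blow up as the limiting matrix becomes singular. Since the $\ell_0$ contribution $C\|\Sb^k\|_0$ is bounded (it lies between $0$ and $Cp^2$), it suffices to show that the smooth part $f(\Lb^k,\Sb^k)$ diverges to $+\infty$. To this end I would work with the form \eqref{f_struc_02}, writing $\Sigmab^k := \Lb^k + \Sb^k$ and $\tilde\Sigmab := \tilde\Lb + \tilde\Sb$.

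First I would dispose of the terms that stay bounded. By hypothesis $(\Lb^k,\Sb^k) \to (\tilde\Lb,\tilde\Sb)$, so by continuity of the trace, $\trace(\Lb^k) \to \trace(\tilde\Lb)$ and $\trace[\Sigmab^k \check{\Sigmab}^{-1}] \to \trace[\tilde\Sigmab \check{\Sigmab}^{-1}]$, both finite. Moreover, since $\Sigmab^k \succ 0$ and $\check{\Sigmab}^{-1} \succ 0$, the latter trace is in fact nonnegative along the whole sequence. Thus the quantity $\trace(\Lb^k) + \mu\,\trace[\Sigmab^k \check{\Sigmab}^{-1}]$ converges to a finite nonnegative limit.

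Next I would treat the log-determinant. The determinant is a polynomial in the matrix entries and hence continuous, so $\det\Sigmab^k \to \det\tilde\Sigmab$. Since $\tilde\Sigmab$ is positive semidefinite but singular, $\det\tilde\Sigmab = 0$; and since each $\Sigmab^k \succ 0$ we have $\det\Sigmab^k > 0$, so in fact $\det\Sigmab^k \to 0^+$. Consequently $\log\det\Sigmab^k \to -\infty$, i.e., $-\mu\log\det\Sigmab^k \to +\infty$. Combining this with the previous paragraph, the bounded terms cannot offset the divergence, and therefore $f(\Lb^k,\Sb^k) \to +\infty$. Adding back the bounded $\ell_0$ term yields $F(\Lb^k,\Sb^k) \to +\infty$, as claimed.

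I do not anticipate a serious obstacle here: the argument rests on continuity of the determinant together with the fact that $\log$ sends $0^+$ to $-\infty$. The only point demanding a little care is verifying that the remaining terms remain bounded below, so that they genuinely cannot cancel the $+\infty$ coming from the log-determinant; this follows at once from the positivity of $\Lb^k$ and of $\Sigmab^k\check{\Sigmab}^{-1}$.
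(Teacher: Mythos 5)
Your proof is correct, but it takes a genuinely more elementary route than the paper's. The paper recycles the spectral-decomposition identity \eqref{trace_minus_logdet} from the proof of Lemma~\ref{lem_diverg}: it notes that each summand $c_i\lambda_{k,i}-\log\lambda_{k,i}$ in \eqref{sum_compare} is bounded from below, that convergence keeps all eigenvalues bounded, and that at least one eigenvalue must tend to zero (since the limit is singular), forcing the corresponding summand---and hence the sum---to diverge. You instead bypass eigenvalues entirely: because the sequence \emph{converges}, the trace terms in \eqref{f_struc_02} converge by continuity, and the divergence is isolated in $-\mu\log\det\Sigmab^k$ via continuity of the determinant, $\det\Sigmab^k\to\det\tilde\Sigmab=0^+$. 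This is cleaner here precisely because convergence of $(\Lb^k,\Sb^k)$ makes continuity arguments available---something the paper could not exploit in Lemma~\ref{lem_diverg}, where the sequence is unbounded and the eigenvalue-wise lower bounds are genuinely needed. What the paper's choice buys is uniformity: one formula serves both lemmas. What yours buys is brevity and independence from the earlier machinery; your closing remark about lower-boundedness of the remaining terms is even slightly redundant, since convergence to a finite limit already rules out any cancellation of the $+\infty$.
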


\begin{proof}
	It is clear that formula \eqref{trace_minus_logdet} is still valid for $\Sigmab^k=\Lb^k+\Sb^k$. Now due to convergence, each eigenvalue $\lambda_{k,i}$ must remain bounded. It is easy to observe that $c_i\lambda_{k,i}-\log\lambda_{k,i}\to\infty$ when $\lambda_{k,i}\to 0$. This implies that when $\tilde{\Sigmab}=\tilde{\Lb}+\tilde{\Sb}$ is singular, the sum in $\eqref{sum_compare}$ also diverges following the argument in the proof of Lemma~\ref{lem_diverg}.
	As a result, $f(\Lb^k, \Sb^k)$, and hence $F(\Lb^{k},\Sb^{k})$, must go to infinity as $k\to \infty$.
\end{proof}

We are now ready to prove Theorem~\ref{thm_exist}.

\begin{proof}[Proof of Theorem~\ref{thm_exist}]
	
	Take an arbitrary feasible point $(\Lb^0, \Sb^0)$ of \eqref{optim_model_2}, and let $\beta:=F(\Lb^0, \Sb^0)$. Consider the sublevel set
	\begin{equation}\label{sublevel_set}
	F^{-1}(-\infty, \beta] := \{(\Lb, \Sb)\in\Dscr : F(\Lb, \Sb)\leq \beta\}
	\end{equation}
	which is obviously nonempty because it contains $(\Lb^0, \Sb^0)$. Suppose that we have a convergent sequence $\{(\Lb^k, \Sb^k)\}_{k\geq 1}\subset F^{-1}(-\infty, \beta]$ which tends to some $(\tilde\Lb, \tilde\Sb)$. We first exclude the possibility that $\tilde{\Lb}+\tilde{\Sb}\succeq 0$ but is \emph{singular}. Indeed, if that were true, then by Lemma~\ref{lem_boundary}, $F(\Lb^k, \Sb^k)$ would eventually surpass $\beta$ which means that $(\Lb^k, \Sb^k)$ would step out of the sublevel set, a contradiction. Therefore, the limit point must be such that $\tilde{\Lb}+\tilde{\Sb}\succ 0$ and still feasible, that is, $(\tilde{\Lb}, \tilde{\Sb})\in\Dscr$. Furthermore, Lemma~\ref{lem_lower_semiconti} implies that the sublevel set is \emph{closed}. 
	
	In addition, one can argue that the sublevel set is also \emph{bounded}. To see this, suppose the contrary, i.e., the sublevel set were unbounded. Then according to Lemma~\ref{lem_diverg}, there would exist a sequence $\{(\Lb^k, \Sb^k)\}_{k\geq 1}\subset F^{-1}(-\infty, \beta]$ such that the function value $F(\Lb^k, \Sb^k)$ diverges, leading to a contradiction similar to the previous one. 
	
	Due to finite dimensionality, the sublevel set \eqref{sublevel_set} is seen to be \emph{compact}. Finally, appealing to the extreme value theorem of Weierstrass, we conclude that a minimizer of $F(\Lb, \Sb)$ exists in $F^{-1}(-\infty, \beta]$, and the boundedness claim comes from the compactness of the sublevel set.
\end{proof}

\section{Optimality Theory}\label{sec:optimal}

In this section, we shall characterize the optimal solutions to \eqref{optim_model_2}.
To begin with, we shall review some well known results concerning the $\ell_0$ proximal operator, see e.g., \cite{blumensath2008iterative}, \cite{blumensath2009iterative} where it is called \emph{hard thresholding operator}.

\subsection{$\ell_0$ proximal operator}

We first discuss the scalar case. Given a parameter $\gamma>0$, the proximal operator of $C|\cdot|_0$ is defined as
\begin{equation}\label{prox_scalar_def}
\prox_{\gamma C |\cdot|_0} (s) := \underset{v\in\Rbb}{\argmin}\ C|v|_0 + \frac{1}{2\gamma} (v-s)^2.
\end{equation}
The proof of the next lemma is a simple exercise and is therefore omitted.
\begin{proposition}
	The solution to the proximal operator in \eqref{prox_scalar_def} is expressed as
	\begin{equation}\label{prox_scalar_sol}
	\prox_{\gamma C |\cdot|_0} (s) = \begin{cases}
	0, & \text{if}\ |s| < \sqrt{2\gamma C} \\
	0\ \text{or}\ s, & \text{if}\ |s| = \sqrt{2\gamma C} \\
	s, & \text{if}\ |s| > \sqrt{2\gamma C}.
	\end{cases}
	\end{equation}
\end{proposition}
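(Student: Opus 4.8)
The plan is to exploit the fact that in the scalar case the $\ell_0$ ``norm'' takes only two values, namely $|v|_0 = 0$ when $v = 0$ and $|v|_0 = 1$ otherwise. This dichotomy splits the minimization in \eqref{prox_scalar_def} into two mutually exclusive branches, each of which reduces to minimizing a univariate quadratic, after which the conclusion follows by comparing the two resulting minimum values.

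First I would evaluate the objective on the branch $v = 0$. Here $C|v|_0 = 0$, so the objective collapses to the fixed number $\frac{1}{2\gamma} s^2$. Next I would minimize the objective over the complementary branch $v \neq 0$, on which $C|v|_0 = C$ is constant and the objective becomes $C + \frac{1}{2\gamma}(v-s)^2$. The smooth quadratic $(v-s)^2$ is minimized at $v = s$ with value zero; provided $s \neq 0$, this minimizer lies in the admissible set $\Rbb \setminus \{0\}$, so the branch minimum is $C$, attained at $v = s$. The only degenerate situation is $s = 0$, but there $\frac{1}{2\gamma} s^2 = 0 < C$ already forces $v = 0$ as the unique minimizer, in agreement with the first case of \eqref{prox_scalar_sol}, so it needs no separate treatment.

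With the two candidate values $\frac{1}{2\gamma} s^2$ (branch $v = 0$) and $C$ (branch $v = s \neq 0$) in hand, the result is a direct comparison governed by the sign of $\frac{1}{2\gamma} s^2 - C$, equivalently of $|s| - \sqrt{2\gamma C}$. If $|s| < \sqrt{2\gamma C}$, then $\frac{1}{2\gamma} s^2 < C$ and the minimizer is $v = 0$; if $|s| > \sqrt{2\gamma C}$, then $C < \frac{1}{2\gamma} s^2$ and the minimizer is $v = s$; and if $|s| = \sqrt{2\gamma C}$, the two branches tie, so both $v = 0$ and $v = s$ are minimizers. This reproduces exactly the three cases of \eqref{prox_scalar_sol}.

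Since every step amounts to evaluating or minimizing a single univariate quadratic, I do not expect any genuine obstacle. The only points deserving minor care are the endpoint $s = 0$ (handled above) and the tie at $|s| = \sqrt{2\gamma C}$, where the \emph{argmin} is genuinely set-valued and the proximal map is therefore multivalued rather than a single-valued function.
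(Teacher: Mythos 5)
Your proof is correct and complete: the case split on $v=0$ versus $v\neq 0$, the resulting comparison of $\tfrac{1}{2\gamma}s^2$ against $C$, and the careful handling of $s=0$ and the tie at $|s|=\sqrt{2\gamma C}$ together give exactly \eqref{prox_scalar_sol}. The paper omits this proof as a ``simple exercise,'' and your argument is precisely the standard one the authors intend, so there is nothing to reconcile.
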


In order to avoid ambiguity, we choose to absorb the middle case of \eqref{prox_scalar_sol} into the first case. In terms of the formula, we use 
\begin{equation}\label{prox_scalar_sol1}
\prox_{\gamma C |\cdot|_0} (s) = \begin{cases}
0, & \text{if}\ |s| \leq \sqrt{2\gamma C} \\
s, & \text{if}\ |s| > \sqrt{2\gamma C}.
\end{cases}
\end{equation}
in the remaining part of this paper. The graph of such a function is drawn in Fig.~\ref{fig:prox_operat}.

\begin{figure}
	\centering
	\begin{tikzpicture}
	\draw[line width=0.5pt][->](-2,0)--(2,0)node[left,below,font=\tiny]{$s$};
	\draw[line width=0.5pt][->](0,-2)--(0,2);
	\node[below,font=\tiny] at (0.09,0.1){0};
	\draw[color=red,thick,smooth][-](-0.8,0)--(0.8,0);
	\draw[color=red,fill=red,smooth](0.8,0)circle(0.04);
	\draw[color=red,fill=red,smooth](-0.8,0)circle(0.04);
	\draw[color=red,fill=white,smooth](0.8,0.8)circle(0.04);
	\draw[color=red,fill=white,smooth](-0.8,-0.8)circle(0.04);
	\node[left,below,font=\tiny]at(0.8,0){$_{\sqrt{2\gamma C}}$};
	\draw[color=red][dashed] (0.8,0)--(0.8,0.8); 
	\draw[color=red,thick,smooth][-](0.8,0.8)--(1.8,1.8);
	\node[left,above,font=\tiny]at(-0.8,0){$_{-\sqrt{2\gamma C}}$};
	\draw[color=red][dashed] (-0.8,0)--(-0.8,-0.8); 
	\draw[color=red,thick,smooth][-](-0.8,-0.8)--(-1.8,-1.8);
	\node[above,font=\tiny] at(1.2,1.7) {$\quad\mathrm{prox}_{\gamma C|\cdot|_0}(s)$};
	\end{tikzpicture}
	\caption{The $\ell_{0}$ proximal operator on the real line.}
	\label{fig:prox_operat}
\end{figure}

The matricial version of \eqref{prox_scalar_def} is then not difficult to derive. Indeed, for $\Sb\in\Rbb^{m\times n}$ we have
\begin{equation}\label{prox_matrix}
\begin{aligned}
\prox_{\gamma C \|\cdot\|_0} (\Sb) & := \underset{\Vb\in\Rbb^{m\times n}}{\argmin}\ C\|\Vb\|_0 + \frac{1}{2\gamma} \|\Vb-\Sb\|_\F^2 \\
 & = \underset{\Vb\in\Rbb^{m\times n}}{\argmin}\ \sum_{i,j} \left[ C|v_{ij}|_0 + \frac{1}{2\gamma} (v_{ij}-s_{ij})^2 \right].
\end{aligned}
\end{equation}
Since the objective function above can be decoupled for each element $s_{ij}$, the solution to the matricial $\ell_0$ proximal operator can be written as
\begin{equation}\label{prox_elementsise}
\left[ \prox_{\gamma C \|\cdot\|_0} (\Sb) \right]_{ij} = \prox_{\gamma C |\cdot|_0} (s_{ij}).
\end{equation}
In plain words, the outcome of the proximal operator is obtained by elementwise applications of the scalar solution \eqref{prox_scalar_sol1}.

\subsection{General optimization model}

In this subsection and the next one, we work on the abstract optimization model \eqref{optim_model_2} and temporarily forget about the specific functional form of $f$ in \eqref{f_specific}. More precisely, we make the following assumptions.

\begin{assumption}\label{assump_convex}
	The function $f$ in \eqref{optim_model_2} is jointly strictly convex in $(\Lb,\Sb)$.
\end{assumption}

\begin{assumption}\label{assump_grad_Lipschitz}
	The gradient of $f$ is Lipschitz continuous with a Lipschitz constant $K>0$ on the sublevel set \eqref{sublevel_set}, that is, for any $(\Lb^1, \Sb^1)$ and $(\Lb^2, \Sb^2)$ in $F^{-1}(-\infty, \beta]$, it holds that
	\begin{equation}
	\left\| \bmat \nabla_\Lb f(\Lb^1, \Sb^1) \\ \nabla_\Sb f(\Lb^1, \Sb^1) \emat - \bmat \nabla_\Lb f(\Lb^2, \Sb^2) \\ \nabla_\Sb f(\Lb^2, \Sb^2) \emat \right\|_\F 
	\leq K \left\|\bmat \Lb^1\\ \Sb^1\emat - \bmat\Lb^2\\ \Sb^2\emat\right\|_\F
	\end{equation}
	where the gradient operators $\nabla_\Lb f(\Lb, \Sb)$ and $\nabla_\Sb f(\Lb, \Sb)$ are identified with suitable symmetric matrices. Here the superscripts are simply labels for the variables instead of usual exponents.
\end{assumption}

The next result is straightforward, and it also shows that the specific problem \eqref{optim_model_2} with $f$ in \eqref{f_specific} is just a special case for the optimality theory provided in this section.

\begin{proposition}\label{assumption_satisifies}
	The specific function $f$ in \eqref{f_specific} satisfies both Assumptions~\ref{assump_convex} and \ref{assump_grad_Lipschitz}.
\end{proposition}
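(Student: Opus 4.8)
The plan is to handle the two assumptions separately, with the convexity essentially free and the Lipschitz estimate being the real work. Assumption~\ref{assump_convex} is literally the conclusion of Proposition~\ref{prop_strict_convex}, which was established for exactly the $f$ in \eqref{f_specific}; so for that half there is nothing left to prove and I would simply cite it.

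For Assumption~\ref{assump_grad_Lipschitz}, I would begin from the closed form \eqref{f_struc_02}. Using $\Sigmab = \Lb+\Sb$ together with the standard identities $\nabla_\Xb \trace(\Xb\check{\Sigmab}^{-1}) = \check{\Sigmab}^{-1}$ and $\nabla_\Xb \log\det\Xb = \Xb^{-1}$, one computes $\nabla_\Lb f(\Lb,\Sb) = \Ib + \mu\check{\Sigmab}^{-1} - \mu\Sigmab^{-1}$ and $\nabla_\Sb f(\Lb,\Sb) = \mu\check{\Sigmab}^{-1} - \mu\Sigmab^{-1}$. The key structural observation is that the only point-dependent piece in \emph{either} gradient block is $-\mu\Sigmab^{-1}$. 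Hence, writing $\Sigmab^i = \Lb^i + \Sb^i$, both blocks of the stacked gradient difference equal $-\mu[(\Sigmab^1)^{-1} - (\Sigmab^2)^{-1}]$, so the left-hand side in Assumption~\ref{assump_grad_Lipschitz} is exactly $\sqrt{2}\,\mu\,\|(\Sigmab^1)^{-1} - (\Sigmab^2)^{-1}\|_\F$.

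To control this, I would invoke the resolvent identity $(\Sigmab^1)^{-1} - (\Sigmab^2)^{-1} = (\Sigmab^1)^{-1}(\Sigmab^2 - \Sigmab^1)(\Sigmab^2)^{-1}$ and submultiplicativity of the spectral and Frobenius norms to get $\|(\Sigmab^1)^{-1} - (\Sigmab^2)^{-1}\|_\F \leq \|(\Sigmab^1)^{-1}\|_2\,\|(\Sigmab^2)^{-1}\|_2\,\|\Sigmab^1 - \Sigmab^2\|_\F$. Since $\|\Sigmab^1 - \Sigmab^2\|_\F \leq \|\Lb^1-\Lb^2\|_\F + \|\Sb^1-\Sb^2\|_\F \leq \sqrt{2}\,\|(\Lb^1,\Sb^1)-(\Lb^2,\Sb^2)\|_\F$, everything reduces to a \emph{uniform} bound on $\|\Sigmab^{-1}\|_2$ over the sublevel set \eqref{sublevel_set}. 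This is where I would use Theorem~\ref{thm_exist}: its proof shows that \eqref{sublevel_set} is compact and, via Lemma~\ref{lem_boundary}, contains no point where $\Sigmab=\Lb+\Sb$ is singular. Therefore $(\Lb,\Sb)\mapsto\lambda_{\min}(\Lb+\Sb)$ is continuous and strictly positive on a compact set, hence attains a minimum $\varepsilon>0$, giving $\|\Sigmab^{-1}\|_2 = 1/\lambda_{\min}(\Sigmab) \leq 1/\varepsilon$ throughout. Chaining the three estimates yields the explicit constant $K = 2\mu/\varepsilon^2$.

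I expect the only genuine obstacle to be the uniform positivity of $\lambda_{\min}(\Sigmab)$ on the sublevel set: near the positive-semidefinite boundary $\Sigmab^{-1}$ can blow up, which would destroy any \emph{global} Lipschitz property, so restricting to the sublevel set and importing its compactness (and the exclusion of singular limits from Lemma~\ref{lem_boundary}) is essential. Everything else is routine matrix calculus and norm bookkeeping.
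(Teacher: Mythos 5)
Your proof is correct, and for the Lipschitz half it takes a genuinely more explicit route than the paper. The paper's own proof handles Assumption~\ref{assump_convex} exactly as you do (citing Proposition~\ref{prop_strict_convex}) and then disposes of Assumption~\ref{assump_grad_Lipschitz} by a purely soft argument: the sublevel set \eqref{sublevel_set} is compact (from the proof of Theorem~\ref{thm_exist}), and a smooth map is Lipschitz on any compact convex set. You instead compute the gradient blocks $\nabla_\Lb f = \Ib + \mu\check{\Sigmab}^{-1} - \mu(\Lb+\Sb)^{-1}$ and $\nabla_\Sb f = \mu\check{\Sigmab}^{-1} - \mu(\Lb+\Sb)^{-1}$ (both correct), note that only $-\mu(\Lb+\Sb)^{-1}$ varies with the point, and control its variation via the resolvent identity together with a uniform lower bound $\varepsilon>0$ on $\lambda_{\min}(\Lb+\Sb)$ over the compact sublevel set, which Lemma~\ref{lem_boundary} guarantees is bounded away from the singular boundary. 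Both arguments hinge on the same compactness ingredient, but yours buys two things. First, an explicit constant $K = 2\mu/\varepsilon^2$, which is practically relevant because $K$ determines the admissible step-size interval $(0,1/(2K))$ in Theorem~\ref{thm_optimality}. Second, your pointwise resolvent estimate requires no convexity of the domain, whereas the paper's appeal to ``Lipschitz on a compact \emph{convex} set'' glosses over two small imprecisions: the sublevel set of $F$ is generally nonconvex (because of the $\|\Sb\|_0$ term), so one must pass to its convex hull and verify the hull stays in the positive-definite region (true, by concavity of $\lambda_{\min}$); and what must be Lipschitz is the gradient map $\nabla f$, not $f$ itself, so the relevant fact is that the $C^1$ map $\nabla f$ is Lipschitz on compacta. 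Your norm bookkeeping (the factor $\sqrt{2}$ from the two identical stacked blocks, the bound $\|\Ab\Bb\Cb\|_\F \le \|\Ab\|_2\,\|\Bb\|_\F\,\|\Cb\|_2$, and the triangle inequality giving $\|\Sigmab^1-\Sigmab^2\|_\F \le \sqrt{2}\,\|(\Lb^1,\Sb^1)-(\Lb^2,\Sb^2)\|_\F$) is all sound, so your argument is a valid, self-contained, and quantitatively sharper alternative to the paper's.
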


\begin{proof}
	The strict convexity of $f$ has been shown in Proposition~\ref{prop_strict_convex}.	
    To show that $f$ in \eqref{f_specific} has a Lipschitz continuous gradient, we use a previous result claiming that $f$ has a compact sublevel set, see the proof of Theorem~\ref{thm_exist}. More precisely, it is well known that a smooth function ($f$ here in view of Proposition~\ref{prop_strict_convex} again) is Lipschitz continuous on any compact convex set, which yields the assertion.
\end{proof}

\subsection{P-stationarity conditions}

The Lagrangian of the optimization problem  \eqref{optim_model_2}  can be expressed as 
\begin{equation}
	\Lcal(\Lb,\Sb;\Lambdab,\Thetab) = \ f(\Lb,\Sb) +C\|\Sb\|_0 - \innerprod{\Lambdab}{\Lb} - \innerprod{\Thetab}{\Sb}
\end{equation}
where $\Lambdab \succeq 0$ and $\Thetab \succeq 0$  are the Lagrange multipliers for the constraints $\Lb\succeq 0$ and $\Sb\succeq 0$. For the moment, we ignore the strict inequality constraint $\Lb+\Sb\succ 0$. Now we introduce a generalized definition of a KKT point in nonlinear programming.

\begin{definition}[P-stationary point of \eqref{optim_model_2}]\label{def_P-stat_pt_v2}
	The pair $(\Lb^*, \Sb^*)$ is called a proximal stationary (abbreviated as P-stationary) point of \eqref{optim_model_2} if there exist $p$ by $p$ symmetric matrices $\Lambdab^*$, $\Thetab^*$ and a real number $\gamma>0$ such that the following set of conditions hold:
	\begin{subequations}\label{P-stationary-cond_v2}
	\begin{align}
		\Lb^* \succeq 0,\ \Sb^* & \succeq 0 \label{P-stat-primal-1_v2} \\
		\Lb^*+\Sb^* & \succ 0 \label{P-stat-primal-2_v2} \\
		\Lambdab^* \succeq 0,\ \Thetab^* & \succeq 0 \label{P-stat-dual_v2} \\
		\trace(\Lambdab^* \Lb^*) = 0,\ \trace(\Thetab^* \Sb^*) & = 0 \label{P-stat-complem_slack_v2} \\
		\nabla_\Lb f(\Lb^*,\Sb^*) -\Lambdab^* & = O \label{P-stat-Lagrangian-grad-1_v2} \\
		\prox_{\gamma C \|\cdot\|_0}\left[\Sb^*-\gamma (\nabla_\Sb f(\Lb^*, \Sb^*)-\Thetab^*)\right] & = \Sb^*. \label{P-stat-Lagrangian-prox_v2}
	\end{align}
\end{subequations}
\end{definition}
Formulas in \eqref{P-stationary-cond_v2} describe the usual first-order KKT conditions \cite{boyd2004convex} except for \eqref{P-stat-Lagrangian-prox_v2}. 
 Indeed \eqref{P-stat-primal-1_v2} and \eqref{P-stat-primal-2_v2} are the primal constraints, \eqref{P-stat-dual_v2} is the dual constraint, \eqref{P-stat-complem_slack_v2} is the complementary slackness condition, and \eqref{P-stat-Lagrangian-grad-1_v2} and \eqref{P-stat-Lagrangian-prox_v2} are the stationarity condition of the Lagrangian with respect to the primal variables. The unique distinction is that the nondifferentiable term $\|\Sb\|_0$ results in the proximal operator \eqref{prox_matrix}. 

\begin{theorem}\label{thm_optimality}
	Suppose that Assumptions \ref{assump_convex} and \ref{assump_grad_Lipschitz} hold. Then we have the following claims.
	\begin{enumerate}
		\lyn{
		\item If $(\Lb^*, \Sb^*)$ is a global minimizer of \eqref{optim_model_2} and the sparse component $\Sb^*$ is positive definite, then $(\Lb^*, \Sb^*)$ is a P-stationary point where the parameter $\gamma$ can take an arbitrary value in the interval $(0, {1}/{(2K)})$.
		\item  
		If $(\Lb^*, \Sb^*)\in\Dscr$ is a P-stationary point, then $(\Lb^*, \Sb^*)$ is a local minimizer of \eqref{optim_model_2}.
	}  
	\end{enumerate}
	
\end{theorem}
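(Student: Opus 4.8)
The plan is to prove the two implications separately, writing $\Gb^*:=\nabla_\Sb f(\Lb^*,\Sb^*)$ throughout and using that, by Proposition~\ref{assumption_satisifies}, $f$ is convex with $K$-Lipschitz gradient and that the matricial prox \eqref{prox_matrix} decouples entrywise via \eqref{prox_elementsise}. For the first claim I would construct the multipliers explicitly and verify \eqref{P-stationary-cond_v2}. Conditions \eqref{P-stat-primal-1_v2}--\eqref{P-stat-primal-2_v2} are just $(\Lb^*,\Sb^*)\in\Dscr$. Since $\Sb^*\succ0$ the constraint $\Sb\succeq0$ is inactive, so I set $\Thetab^*=O$, which immediately gives \eqref{P-stat-dual_v2} and the second identity in \eqref{P-stat-complem_slack_v2}. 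Freezing $\Sb=\Sb^*$, the point $\Lb^*$ minimizes the smooth convex map $\Lb\mapsto f(\Lb,\Sb^*)$ over the PSD cone, and the normal-cone (variational-inequality) characterization of this convex program lets me take $\Lambdab^*:=\nabla_\Lb f(\Lb^*,\Sb^*)$, yielding $\Lambdab^*\succeq0$, $\trace(\Lambdab^*\Lb^*)=0$, and \eqref{P-stat-Lagrangian-grad-1_v2} simultaneously.

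The heart of Part~1 is the proximal identity \eqref{P-stat-Lagrangian-prox_v2}, which with $\Thetab^*=O$ reads $\prox_{\gamma C\|\cdot\|_0}(\Sb^*-\gamma\Gb^*)=\Sb^*$. Writing $\Sb^+$ for the left-hand side, I would evaluate the minimizing property of the prox at the competitor $\Sb^*$ to obtain $C\|\Sb^+\|_0+\innerprod{\Gb^*}{\Sb^+-\Sb^*}+\tfrac{1}{2\gamma}\|\Sb^+-\Sb^*\|_\F^2\le C\|\Sb^*\|_0$, then use the descent lemma (Assumption~\ref{assump_grad_Lipschitz}) to replace $\innerprod{\Gb^*}{\Sb^+-\Sb^*}$ by $f(\Lb^*,\Sb^+)-f(\Lb^*,\Sb^*)-\tfrac{K}{2}\|\Sb^+-\Sb^*\|_\F^2$. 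Since $\gamma<1/(2K)$ makes the coefficient $\tfrac{1}{2\gamma}-\tfrac{K}{2}$ strictly positive, this rearranges to $F(\Lb^*,\Sb^+)-F(\Lb^*,\Sb^*)+(\tfrac{1}{2\gamma}-\tfrac{K}{2})\|\Sb^+-\Sb^*\|_\F^2\le0$, and global optimality of $(\Lb^*,\Sb^*)$ forces $\Sb^+=\Sb^*$. Read entrywise through \eqref{prox_elementsise}, this is precisely the statement that $\Gb^*$ vanishes on the support $T=\{(i,j):s^*_{ij}\ne0\}$ while every surviving entry satisfies $|s^*_{ij}|>\sqrt{2\gamma C}$, and that $|\Gb^*_{ij}|\le\sqrt{2C/\gamma}$ wherever $s^*_{ij}=0$; the gradient bound off $T$ follows cleanly from the co-coercivity inequality for the convex $K$-smooth slice $t\mapsto f(\Lb^*,\Sb^*+tE_{ij})$ (with $E_{ij}$ the symmetric single-entry perturbation), whose minimizer is interior because the $-\log\det$ barrier makes the slice coercive, combined with the fact that switching a single entry on costs at least $C$ by optimality.

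The step I expect to be the main obstacle is guaranteeing that the competitors invoked above are \emph{feasible}, since global optimality (and the descent lemma on the sublevel set) may only be used against points in $\Dscr$. In entrywise language this is the lower bound $|s^*_{ij}|\ge\sqrt{2C/K}$ on the surviving entries, whose natural proof compares $\Sb^*$ with the matrix obtained by zeroing the entry $(i,j)$ and therefore requires that zeroed matrix to remain positive semidefinite. This is exactly where the hypothesis $\Sb^*\succ0$ is indispensable: it places $\Sb^*$ in the interior of the PSD cone, so the relevant perturbations (and the prox image, for $\gamma$ in the stated range) stay feasible; making this reduction watertight for \emph{every} $\gamma\in(0,1/(2K))$ is the portion that needs genuine care, whereas the off-support gradient estimate is automatic.

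For the second claim I may take the P-stationarity conditions as hypotheses, so the argument is a convexity estimate with a support split and is considerably cleaner. The entrywise reading of \eqref{P-stat-Lagrangian-prox_v2} gives $[\Gb^*-\Thetab^*]_{ij}=0$ with $|s^*_{ij}|>\sqrt{2\gamma C}$ on $T$, and $|[\Gb^*-\Thetab^*]_{ij}|\le\sqrt{2C/\gamma}$ off $T$. For any feasible $(\Lb,\Sb)$ near $(\Lb^*,\Sb^*)$, convexity yields $f(\Lb,\Sb)-f(\Lb^*,\Sb^*)\ge\innerprod{\nabla_\Lb f}{\Lb-\Lb^*}+\innerprod{\nabla_\Sb f}{\Sb-\Sb^*}$; substituting \eqref{P-stat-Lagrangian-grad-1_v2} and using $\trace(\Lambdab^*\Lb)\ge0$, $\trace(\Thetab^*\Sb)\ge0$ (products of PSD matrices) together with complementary slackness, all contributions are nonnegative except the off-$T$ term $\sum_{(i,j)\notin T}[\Gb^*-\Thetab^*]_{ij}\,s_{ij}$. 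Adding the $\ell_0$ increment $C(\|\Sb\|_0-\|\Sb^*\|_0)$, each newly activated off-$T$ entry contributes at least $C-\sqrt{2C/\gamma}\,|s_{ij}|$, which is positive as soon as $|s_{ij}|<\sqrt{C\gamma/2}$. Shrinking the neighborhood so that the entries on $T$ stay nonzero (possible since $|s^*_{ij}|>\sqrt{2\gamma C}$) and the off-$T$ entries are smaller than $\sqrt{C\gamma/2}$, every term is nonnegative, giving $F(\Lb,\Sb)\ge F(\Lb^*,\Sb^*)$; the only care here is choosing the neighborhood to preserve both the support of $\Sb^*$ and feasibility.
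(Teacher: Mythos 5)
Your proposal is correct and, in Part 1, is essentially the paper's own argument: use $\Sb^*\succ 0$ to force $\Thetab^*=O$, extract $\Lambdab^*=\nabla_\Lb f(\Lb^*,\Sb^*)$ from the KKT conditions of the convex $\Lb$-subproblem \eqref{subpro_l}, and then play the prox minimizing inequality against global optimality so that a strictly positive multiple of $\|\Sb^+-\Sb^*\|_\F^2$ is squeezed to zero. The only difference there is cosmetic: you invoke the descent lemma (constant $K/2$) where the paper uses the mean value theorem plus Cauchy--Schwarz and the Lipschitz bound (constant $K$); both close for $\gamma<1/(2K)$, and yours would in fact tolerate $\gamma<1/K$. (Your aside re-deriving the off-support gradient bound via co-coercivity of single-entry slices is superfluous, and it leans on the $-\log\det$ barrier, which the abstract setting of Assumptions \ref{assump_convex}--\ref{assump_grad_Lipschitz} does not supply; the entrywise bounds are already equivalent to the prox identity, which is how your Part 2 uses them.) In Part 2 you take a mildly different route: the paper splits the neighborhood into points that activate no new entry of $\Sb$ (treated exactly as in your argument, via convexity, complementary slackness, and the on-support identity $\xi^*_{ij}=\theta^*_{ij}$) and points that do activate one, which it dispatches by pure continuity of $f$, choosing $\delta$ so that $|f-f^*|<C$, never actually using the off-support bound $|\xi^*_{ij}-\theta^*_{ij}|\le\sqrt{2C/\gamma}$ that it derives; you instead run one unified estimate in which each newly activated entry contributes at least $C-\sqrt{2C/\gamma}\,|s_{ij}|>0$ once the neighborhood radius is below $\sqrt{C\gamma/2}$. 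Both are valid; yours yields an explicit radius, the paper's case split is slightly shorter. Finally, the feasibility obstacle you single out as the main difficulty of Part 1 is real, but it is a gap in the published proof as much as in your sketch: the paper states the global-optimality inequality \eqref{global_pt_i} only for competitors $\Zb\succeq 0$ with $\Lb^*+\Zb\succ 0$, and then substitutes $\Zb=\prox_{\gamma C\|\cdot\|_0}(\Sb^*-\gamma\Gb^*)$ without verifying either condition (nor that the segment joining $\Sb^*$ and $\Zb$ stays where Assumption \ref{assump_grad_Lipschitz} applies). Your interiority argument from $\Sb^*\succ 0$ repairs this when $\gamma$ is small, since the prox output then lies near $\Sb^*$; for the full range $(0,1/(2K))$ an extra argument is needed, and neither your sketch nor the paper provides one.
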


\begin{proof}
	\lyn{To show Point 1), let us suppose that $(\Lb^*, \Sb^*)$ is a global minimizer of \eqref{optim_model_2}, and then we need to show that the P-stationary point conditions \eqref{P-stationary-cond_v2} hold for $(\Lb^*, \Sb^*)$. }
 
\lyn{The conditions \eqref{P-stat-primal-1_v2} and \eqref{P-stat-primal-2_v2} are just feasibility. 
Then, we first fix $\Sb^*\succ0$, and consider the optimality with respect to $\Lb$.
Indeed, the term $C\Vert \mathbf{S}\Vert _0$ in \eqref{optim_model_2} now becomes a constant $C\Vert \mathbf{S}^*\Vert _0$, which implies that} 
	\begin{equation}\label{subpro_l}
		\Lb^* = \underset{\Lb\succeq 0}{\argmin}\ f(\Lb, \Sb^*) 
	\end{equation}
	where the right hand side is a smooth convex optimization problem. Hence the usual KKT conditions read \lyn{as $\Lambdab^*\succeq0$} in \eqref{P-stat-dual_v2}, $\trace(\Lambdab^* \Lb^*) = 0$ in \eqref{P-stat-complem_slack_v2},  and \eqref{P-stat-Lagrangian-grad-1_v2},
	where $\Lambdab^*$ is the Lagrange multiplier \lyn{associated to the constraint $\Lb\succeq 0$}. 
	
	Next, we fix $\Lb^*$ and derive the remaining conditions, namely  $\Thetab^*\succeq0$ in \eqref{P-stat-dual_v2}, $\trace(\Thetab^* \Sb^*) = 0$ in \eqref{P-stat-complem_slack_v2},  and \eqref{P-stat-Lagrangian-prox_v2} \lyn{by considering the optimality with respect to $\Sb$. Similar to the characterization \eqref{subpro_l},} we have
	\begin{equation}\label{subpro_s}
		\Sb^* = \underset{\substack{\Sb\succeq 0,\, \Lb^*+\Sb\succ 0}}{\argmin}\ f(\Lb^*, \Sb) + C\|\Sb\|_0.
	\end{equation}
 The usual KKT analysis brings about the conditions $\Thetab^*\succeq0$ and $\trace(\Thetab^* \Sb^*) = 0$. In fact, we must have $\Thetab^* =O$ due to the additional assumption $\Sb^*\succ 0$ and $\trace(\Thetab^* \Sb^*)=0$ holds trivially.
 Hence \eqref{P-stat-Lagrangian-prox_v2} reduces to 
 \begin{equation}\label{reduced_Lagrangian_prox}
      \prox_{\gamma C \|\cdot\|_0}\left[\Sb^*-\gamma \nabla_\Sb f(\Lb^*, \Sb^*)\right] = \Sb^*
 \end{equation}
\lyn{which remains to be established in the following}.

By global optimality \lyn{of $(\Lb^*, \Sb^*)$}, we have
	\begin{equation}\label{global_pt_i}
		f(\Lb^*, \Sb^*) + C\|\Sb^*\|_0 \leq f(\Lb^*, \Zb) + C\|\Zb\|_0,
	\end{equation}
 where $\Zb$ is an arbitrary  positive semidefinite matrix that satisfies $\Lb^*+\Zb \succ0$.
 \lyn{Let $\mathbf{\Xi}^* := \nabla_\Sb f(\Lb^*, \Sb^*)$. Then, we obtain the inequalities}
 		\begin{equation}\label{global_pt_ii}
 			\begin{aligned}
 				f(\Lb^*, \Zb) & = f(\Lb^*, \Sb^*) + \innerprod{\nabla_\Sb f(\Lb^*, \Bar{\Sb})}{\Zb-\Sb^*} \\
 				& = f(\Lb^*, \Sb^*) + \innerprod{\lyn{\mathbf{\Xi}^*}}{\Zb-\Sb^*} \\
 				& \quad +\innerprod{\nabla_\Sb f(\Lb^*, \bar{\Sb})-\lyn{\mathbf{\Xi}^*}}{\Zb-\Sb^*} \\
 				& \leq f(\Lb^*, \Sb^*) + \innerprod{\lyn{\mathbf{\Xi}^*}}{\Zb-\Sb^*} + K\|\Zb - \Sb^*\|_\F^2,
 			\end{aligned}
 		\end{equation}
 \lyn{
 where the first equality is due to the mean value theorem with $\Bar{\Sb} = t\Zb + (1-t)\Sb^*$ for some $t\in(0, 1)$, and the last inequality follows from Cauchy--Schwarz and Assumption~\ref{assump_grad_Lipschitz}.
 }
		Now, let us take a particular $\Zb = \prox_{\gamma C \|\cdot\|_0} (\Sb^*-\gamma\lyn{\mathbf{\Xi}^*})$ which is the left side of \eqref{reduced_Lagrangian_prox} with $0<\gamma<{1}/{ (2K)}$. According to the definition of the proximal operator \eqref{prox_matrix}, we have
		\begin{equation}\label{global_pt_iii}
				C\|\Zb\|_0 + \frac{1}{2\gamma} \|\Zb- (\Sb^*-\gamma\lyn{\mathbf{\Xi}^*}) \|_\F^2 \leq C\|\Sb^*\|_0 + \frac{\gamma}{2} \|\lyn{\mathbf{\Xi}^*}\|_\F^2.
			\end{equation}
		Combining the previous inequalities, we obtain the following:
		\begin{subequations}
				\begin{align}
						0 & \leq  C\|\Zb\|_0+f(\Lb^*, \Zb)- C\|\Sb^*\|_0 -f(\Lb^*, \Sb^*) \label{inequal_01} \\ 
						& \leq  C\|\Zb\|_0 -C\|\Sb^*\|_0 + \innerprod{\lyn{\mathbf{\Xi}^*}}{\Zb-\Sb^*} + K\|\Zb - \Sb^*\|_\F^2 \label{inequal_02} \\
						& = C\|\Zb\|_0 -C\|\Sb^*\|_0 + \frac{1}{2\gamma} \|\Zb-(\Sb^*-\gamma\lyn{\mathbf{\Xi}^*})\|_\F^2 \nn \\ 
						& \quad -\frac{\gamma}{2}\|\lyn{\mathbf{\Xi}^*}\|_\F^2 +(K-\frac{1}{2\gamma})\|\Zb-\Sb^*\|_\F^2  \nn \\
						& \leq (K-\frac{1}{2\gamma})\|\Zb-\Sb^*\|_\F^2 \leq 0, \label{inequal_03}
					\end{align}
			\end{subequations}
		where \eqref{inequal_01} is the same as \eqref{global_pt_i}, \eqref{inequal_02} is implied by \eqref{global_pt_ii}, \eqref{inequal_03} is a consequence of \eqref{global_pt_iii}, and the last inequality comes from the fact $K-1/(2\gamma)<0$ due to our choice of $\gamma$. Therefore, we have $\|\Zb-\Sb^*\|_\F=0$ and thus $\Zb=\Sb^*$ which is precisely \eqref{reduced_Lagrangian_prox}, a simplified version of \eqref{P-stat-Lagrangian-prox_v2}.
  
	Now let us treat Point 2). \lyn{Suppose that $(\Lb^*, \Sb^*)\in\Dscr$ is a P-stationary point in the sense of Definition~\ref{def_P-stat_pt_v2}, where $\Dscr$ is the feasible set \eqref{feasible_set}.}
	For simplicity, we write $\Phib^*=(\Lb^*,\Sb^*)$ and still $\lyn{\mathbf{\Xi}^*} = \nabla_\Sb f(\Lb^*, \Sb^*)$.
	The aim is to show that \emph{there exists a sufficiently small neighborhood
	\begin{equation}\label{Phi_star_neighborhd}
		U(\Phib^*, \delta) := \{\Phib=(\Lb, \Sb) : \|\Phib-\Phib^*\|_\F<\delta\}
	\end{equation} 
	with $\delta>0$ such that $\Phib^*$ is locally optimal in $U(\Phib^*,\delta)\cap\Dscr$}. 
	That is to say, for all $(\Lb,\Sb)\in U(\Phib^*,\delta)\cap\Dscr$, it holds that
	\begin{equation}\label{local_optim}
		f(\Lb^*, \Sb^*) + C\|\Sb^*\|_0 \leq f(\mathbf{L},\mathbf{S})+ C\Vert \mathbf{S}\Vert _0.
	\end{equation}
 
	First, we know that there exists $\delta_1>0$ such that 
 \begin{equation}\label{inequal_S_zero_norm_simil}
     \|\Sb\|_0 \geq \|\Sb^*\|_0
 \end{equation}
 holds for all $(\Lb,\Sb)\in U(\Phib^*, \delta_1)$, and the reason has been explained in the proof of Lemma~\ref{lem_lower_semiconti}.
	Then by the continuity of $f(\Lb, \Sb)$, there exists $\delta_2>0$ such that $(\Lb, \Sb)\in U(\Phib^*, \delta_2)$ implies 
	\begin{equation}\label{inequal_continu}
		|f(\Lb,\Sb)-f(\Lb^*,\Sb^*)|<C \implies f(\Lb^*,\Sb^*)-C<f(\Lb,\Sb).
	\end{equation}
	We take $\delta=\min\{\delta_1, \delta_2\}$ in \eqref{Phi_star_neighborhd}, \lyn{and proceed to prove the local optimality of $\Phib^*$, i.e., to show the inequality \eqref{local_optim}.
	
	 Let us introduce some index sets}. Let $\Nbb_p:=\{1,2,\dots,p\}$ and $\Nbb_p^2=\Nbb_p\times\Nbb_p$ be the Cartesian product providing indices for the entries of $p\times p$ matrices. Define $\Gamma_* := \{(i, j) \in\Nbb_p^2 : s^*_{ij}=0\}$ and $\overline{\Gamma}_*:=\Nbb_p^2\backslash\Gamma_*$ the complement. According to the evaluation formula \eqref{prox_scalar_sol1} for the proximal operator, we have the following observations.
	\begin{itemize}
		\item For $(i,j)\in\Gamma_*$, the elementwise version of \eqref{P-stat-Lagrangian-prox_v2} reads as
  $\prox_{\gamma C |\cdot|_0}[s^*_{ij}-\gamma (\lyn{\xi_{ij}^*}-\theta^*_{ij})] = s^*_{ij}=0$. This implies that 
		\begin{equation}
			|\gamma (\lyn{\xi_{ij}^*}-\theta^*_{ij})|\leq \sqrt{2\gamma C} \iff |\lyn{\xi_{ij}^*}-\theta^*_{ij}|\leq \sqrt{2C/\gamma}.
		\end{equation}
		\item For $(i,j)\in \overline{\Gamma}_*$ on the contrary, we have $s_{ij}^*\neq 0$ and $|s^*_{ij}-\gamma (\lyn{\xi_{ij}^*}-\theta^*_{ij})|> \sqrt{2\gamma C}$, which in view of \eqref{prox_scalar_sol1}, leads to
		\begin{equation}\label{property_ind_set_comple}
			s^*_{ij}-\gamma (\lyn{\xi_{ij}^*}-\theta^*_{ij}) = s_{ij}^* \iff\lyn{\xi_{ij}^*} = \theta^*_{ij}.
		\end{equation}
	\end{itemize}
	
	Next we work on two cases corresponding to a partition of the feasible set $\Dscr$. Define
	\begin{equation}
		\Dscr_1 := \Dscr\cap\{\Phib=(\Lb,\Sb) : s_{ij}=0\ \forall (i,j)\in\Gamma_*\}
	\end{equation}
	where the latter set in the intersection is a linear subspace.
	
	\emph{Case 1}: $\Phib\in \Dscr_1\cap U(\Phib^*,\delta)$. We can employ Assumption~\ref{assump_convex} and write down the following chain of inequalities:
	\begin{subequations}
		\begin{align}
			& f(\Lb,\Sb) -f(\Lb^*,\Sb^*) \nn \\ 
			\geq\, &  \innerprod{\nabla_\Lb f(\Lb^*,\Sb^*)}{\Lb-\Lb^*} + \innerprod{\nabla_\Sb f(\Lb^*,\Sb^*)}{\Sb-\Sb^*} \label{local_inequal_01} \\
			= \, & \innerprod{\Lambdab^*}{\Lb-\Lb^*} + \innerprod{\lyn{\mathbf{\Xi}^*}}{\Sb-\Sb^*} \label{local_inequal_02} \\
			= \, & \innerprod{\Lambdab^*}{\Lb} + \innerprod{\lyn{\mathbf{\Xi}^*}}{\Sb-\Sb^*}  \label{local_inequal_03} \\
			\geq\, & \innerprod{\lyn{\mathbf{\Xi}^*}}{\Sb-\Sb^*} - \innerprod{\Thetab^*}{\Sb} = \innerprod{\lyn{\mathbf{\Xi}^*}- \Thetab^*}{\Sb-\Sb^*} \label{local_inequal_04} \\
			= \, & \sum_{(i,j)\in \Gamma_*} (\lyn{\xi_{ij}^*} - \theta_{ij}^*)(s_{ij}-s^*_{ij})  \label{local_inequal_05} \\
			= \, &\sum_{(i,j)\in\Gamma_*} (\lyn{\xi_{ij}^*}-\theta_{ij}^*) s_{ij} =0, \label{local_inequal_06}
		\end{align}
	\end{subequations} 
	where,
	\begin{itemize}
		\item \eqref{local_inequal_01} is the first-order characterization of convexity,
		\item \eqref{local_inequal_02} results from substitutions using \eqref{P-stat-Lagrangian-grad-1_v2} and the definition of \lyn{$\mathbf{\Xi}^*$},
		\item \eqref{local_inequal_03} and \eqref{local_inequal_04} hold in view of \eqref{P-stat-complem_slack_v2}, $\innerprod{\Lambdab^*}{\Lb}\geq 0$ and $\innerprod{\Thetab^*}{\Sb}\geq 0$ since all matrices are positive semidefinite,
		\item \eqref{local_inequal_05}and \eqref{local_inequal_06} is the consequence of \eqref{property_ind_set_comple} and the definitions of $\Gamma_*$ and $\Dscr_1$.
	\end{itemize}
	In short, the reasoning above gives $f(\Lb,\Sb)\geq f(\Lb^*,\Sb^*)$ which plus \eqref{inequal_S_zero_norm_simil} yields the desired inequality \eqref{local_optim}.
	
	\emph{Case 2}: $\Phib\in (\Dscr\backslash\Dscr_1)\cap U(\Phib^*,\delta)$. In this case, there exists some $(i,j)\in\Gamma_*$ such that $s_{ij}\neq 0$ while $s_{ij}^*=0$. It follows that $\|\Sb\|_0\geq \|\Sb^*\|_0 +1$. Combining this point with \eqref{inequal_continu} leads to
	\begin{equation}
		\begin{aligned}
			f(\Lb^*,\Sb^*)+C\|\Sb^*\|_0 & \leq f(\Lb^*,\Sb^*)+C\|\Sb\|_0-C \\
			& < f(\Lb,\Sb)+C\|\Sb\|_0.
		\end{aligned}
	\end{equation}
	
	Therefore, in both cases \eqref{local_optim} holds and this completes the proof of local optimality of a P-stationary point.
\end{proof}

Theorem~\ref{thm_optimality} tells us that in order to obtain a locally optimal solution to \eqref{optim_model_2}, it suffices to find a P-stationary point, and this is the aim of the next section.

\section{Numerical Algorithm}\label{sec:alg}
 
 In this section, we propose an ADMM algorithm to solve the optimization problem \eqref{optim_model_2} and perform convergence analysis of this algorithm.

\subsection{ADMM for \eqref{optim_model_2}}\label{subsec:admm}

The presence of the log-determinant term in \eqref{optim_model_2} implicitly enforces that $\Lb+\Sb\succ 0$. To simplify the algorithm design, we temporarily disregard this constraint. In addition, we introduce two auxiliary variables $\Ub$ and $\Vb$ with equality constraints $\Lb =\Ub$ and $\Sb=\Vb$, and the indicator function to handle the remaining inequality constraints, see \cite[Sec.~5]{boyd2011distributed}. The problem \eqref{optim_model_2} can be reformulated as
\begin{equation}\label{optim_model_admm}
\begin{aligned}
& \underset{\mathbf{L},\, \mathbf{S},\, \Ub, \, \Vb}{\min}\ & &
f(\mathbf{L},\mathbf{S})+ C\Vert \mathbf{S}\Vert _0 + g(\Ub) + g(\Vb) \\
& \quad \text{ s.t. } & & \Lb=\Ub \\
& & & \Sb=\Vb, \\
\end{aligned}
\end{equation}
where the indicator function
\begin{equation}
g(\Ub) = 
\begin{cases}
0 & \text{if}\ \Ub\succeq 0 \\
+\infty & \text{otherwise}
\end{cases}.
\end{equation}
Let \lyn{$\Lambdab$} and \lyn{$\Thetab$} denote the Lagrange multipliers associated with the equality constraints in \eqref{optim_model_admm}. Then we construct the augmented Lagrangian \cite{boyd2011distributed} 
\begin{equation}\label{augment_function}
\begin{aligned}
\Lcal_\rho(\Lb,\Sb,\Ub,\Vb;\Lambdab,\Thetab) = &\ f(\Lb,\Sb) +C\|\Sb\|_0 +g(\Ub) +g(\Vb) \\
 & -\innerprod{\Lambdab}{\Lb-\Ub} + \frac{\rho}{2} \|\Lb-\Ub\|_\F^2 \\
 & -\innerprod{\Thetab}{\Sb-\Vb} + \frac{\rho}{2} \|\Sb-\Vb\|_\F^2,
\end{aligned}
\end{equation}
where $\rho>0$ is a penalty parameter. Given the current iterate $(\Lb^k,\Sb^k,\Ub^k,\Vb^k;\Lambdab^k,\Thetab^k)$, the ADMM updates each variable via a Gauss--Seidel scheme: 
\begin{subequations}\label{update_step}
\begin{align}
\Lb^{k+1} & = \underset{\Lb}{\argmin}\ \Lcal_\rho(\Lb,\Sb^k,\Ub^k,\Vb^k;\Lambdab^k,\Thetab^k) \label{subprob_L} \\
\Sb^{k+1} & = \underset{\Sb}{\argmin}\ \Lcal_\rho(\Lb^{k+1},\Sb,\Ub^k,\Vb^k;\Lambdab^k,\Thetab^k) \label{subprob_S} \\
\Ub^{k+1} & = \underset{\Ub\succeq 0}{\argmin}\ \Lcal_\rho(\Lb^{k+1},\Sb^{k+1},\Ub,\Vb^k;\Lambdab^k,\Thetab^k) \label{subprob_U} \\
\Vb^{k+1} & = \underset{\Vb\succeq0}{\argmin}\ \Lcal_\rho(\Lb^{k+1},\Sb^{k+1},\Ub^{k+1},\Vb;\Lambdab^k,\Thetab^k) \label{subprob_V} \\
\Lambdab^{k+1} & = \Lambdab^k - \rho(\Lb^{k+1}-\Ub^{k+1}) \label{dual_update_Lambda} \\
\Thetab^{k+1} & = \Thetab^k - \rho(\Sb^{k+1}-\Vb^{k+1}). \label{dual_update_Theta}
\end{align}
\end{subequations}

\begin{itemize}
	\item The $\Lb$-subproblem \eqref{subprob_L} can be simplified as
	\begin{equation}\label{subprob_L_equivalent}
		\begin{aligned}
			\Lb^{k+1} &= \underset{\Lb}{\argmin}\ f(\Lb, 	\Sb^k) -\innerprod{\Lambdab^k}{\Lb} + \frac{\rho}{2} \|\Lb-\Ub^k\|_\F^2 \\
		&= \underset{\Lb} {\argmin}  -\log\det(\Lb+\Sb^k) +\frac{\rho}{2\mu }\|\Lb-\Ub^k\|_\F^2\\
			&~~~~~~~~~~~~~ +\innerprod{\Lb}{\tfrac{1}{\mu}\Ib-\tfrac{1}{\mu}\Lambdab^{k}+\check{\Sigmab}^{-1}}.
		\end{aligned}
	\end{equation}
	By \cite[Lemma 1]{de2021graphical}, the global minimizer of \eqref{subprob_L_equivalent} has a closed-form expression
	\begin{equation}\label{solution_L}
		\Lb^{k+1}=  \frac{\mu}{2 \rho}   \Xb^k ( \sqrt{ (\Db^k)^{2}+ \frac{4\rho}{\mu}\Ib }- \Db^k )({\Xb^k})^\top -\Sb^{k},
	\end{equation}
	where $\Xb^k \Db^k({\Xb^k})^\top$ is
	 the spectral decomposition of the symmetric matrix $( \Ib - \Lambdab^{k} +\mu \check{\Sigmab }^{-1} - \rho(\Sb^{k}+\Ub^{k} ))/\mu $.
	\item The $\Sb$-subproblem \eqref{subprob_S} can be expressed as
	\begin{equation}
		\begin{aligned}
			\Sb^{k+1} = \underset{\Sb}{\argmin}\ & f(\Lb^{k+1}, \Sb) +C\|\Sb\|_0 -\innerprod{\Thetab^k}{\Sb} \\
			& + \frac{\rho}{2} \|\Sb-\Vb^k\|_\F^2.
		\end{aligned}
	\end{equation}
	Due to the presence of $\|\Sb\|_0$, it is difficult to find a closed-form solution.
 In this case, we employ a proximal gradient descent step
		\begin{equation}\label{solution_S}
			\begin{aligned}
				\Sb^{k+1} =  \prox_{\gamma C \|\cdot\|_0} \left[ \Sb^k - \gamma \left(\nabla_\Sb f(\Lb^{k+1}, \Sb^k)\right.\right.\\
				\qquad \left.\left.-\Thetab^k +\rho(\Sb^k-\Vb^k)\right) \right],
			\end{aligned}
		\end{equation}
	where $\gamma>0$ is a step size.
	\item The $\Ub$-subproblem \eqref{subprob_U} can be reduced to
	\begin{equation}\label{subprob_U_redu}
		\Ub^{k+1}=\underset{\Ub\succeq 0}{\argmin}\ \innerprod{\Lambdab^k}{\Ub}+\frac{\rho}{2}\|\Lb^{k+1}-\Ub\|_\F^2.
	\end{equation}
	In order to deal with the  constraint $\Ub\succeq 0$, we introduce the projection operator $\Pi_{\Scal_+^p}(\cdot)$ onto $\Scal_{+}^p$,
 the cone of $p\times p$ positive semidefinite matrices. Let $\Tb=\sum_{i=1}^p \sigma_i \tb_i \tb_i^\top$ be the spectral decomposition of a symmetric matrix $\Tb$, where $\sigma_i$ and $\tb_i$ represent the $i$-th eigenvalue and the corresponding eigenvector. Then the projection operator can be expressed as
	\begin{equation}
	 	\Pi_{\Scal_+^p}(\Tb)=\sum_{i=1}^{p} \max\{\sigma_i,0\} \tb_i \tb_i^\top.
	\end{equation}
With the help of the projection operator, we take
		\begin{equation}\label{solution_U}
		\Ub^{k+1}=\Pi_{\Scal_{+}^p}(\Lb^{k+1}-\tfrac{1}{\rho} \Lambdab^{k})
	\end{equation}
 as a solution to \eqref{subprob_U_redu}.
	\item The $\Vb$-subproblem \eqref{subprob_V} is similar to $\eqref{subprob_U}$, and a minimizer  can be calculated by
	\begin{equation}\label{solution_V}
		\Vb^{k+1}=\Pi_{\Scal_{+}^p}(\Sb^{k+1}-\tfrac{1}{\rho} \Thetab^{k}).
	\end{equation}
\end{itemize}
	
Algorithm \ref{alg:admm} summarizes the steps of the ADMM to solve \eqref{optim_model_admm}.
\begin{algorithm}
	\caption{$\ell_0$ ADMM for \eqref{optim_model_2}}
	\label{alg:admm}
	\begin{algorithmic}[1]
		\Require $C$, $\mu$, $\rho$, $\gamma$, $\check{\Sigmab}$, $\texttt{iter}$.
		\State Set $k=0$, and initialize $(\Lb^0, \Sb^0, \Ub^0, \Vb^0; \Lambdab^0, \Thetab^0)$.
		\While{the stopping condition is not met and $k\le \texttt{iter}$}
		\State Update $\Lb^{k+1}$ by \eqref{solution_L}.
		\State Update $\Sb^{k+1}$ by \eqref{solution_S}.
		\State Update $\Ub^{k+1}$ by \eqref{solution_U}.
		\State Update $\Vb^{k+1}$ by \eqref{solution_V}.
		\State Update $\Lambdab^{k+1}$ by \eqref{dual_update_Lambda}.
		\State Update $\Thetab^{k+1}$ by \eqref{dual_update_Theta}.
		\State Set $k=k+1$.
		\EndWhile \\
		\Return the final iterate $(\Lb^k, \Sb^k, \Ub^k, \Vb^k; \Lambdab^k, \Thetab^k)$.
 	\end{algorithmic}
\end{algorithm}

\subsubsection{Initialization of Algorithm~\ref{alg:admm}}
We first compute the spectral decomposition of $\check{\Sigmab}=\Qb\Jb\Qb^\top$ where $\Jb=\diag(\tau_1, \tau_2, \cdots, \tau_p)$ is the eigenvalue matrix such that $\tau_1\geq \tau_2\geq \cdots \geq \tau_p$. Let $\tilde{\Jb}:=\diag(\tau_1, \tau_2, \cdots, \tau_d,0,\cdots,0)$ with $d<p$. Then we initialize the $\ell_0$ ADMM as follows:
 \begin{equation}\label{initial}
	\Lb^0=\Qb\tilde{\Jb}\Qb^\top, \ \Sb^0=\check{\Sigmab}-\Lb^0, \ \Ub^0=\Lb^0,\ \Vb^0=\Sb^0,
\end{equation}
 and the initial Lagrange multiplier matrices $\Lambdab^0$ and $\Thetab^0$ are set to zero matrices.

\subsubsection{Stopping condition of Algorithm~\ref{alg:admm}}
We terminate Algorithm \ref{alg:admm} when two successive iterates are sufficiently close.  Such a condition  can be described as 
\begin{equation}\label{upperbound_error}
	\max\{\beta_1, \beta_2, \beta_3, \beta_4, \beta_5, \beta_6\}<\texttt{tol},
\end{equation} 
where  $\texttt{tol}>0$ represents the tolerance level, and
\begin{equation}\label{termi_cond}
	\begin{aligned}
		\beta_1 & := \|\Lb^{k+1} -\Lb^k\|_\F ,\ \beta_2 :=\|\Sb^{k+1} -\Sb^k\|_\F,\\
		\beta_3& := \|\Ub^{k+1} -\Ub^k\|_\F,\ \beta_4 :=\|\Vb^{k+1} -\Vb^k\|_\F, \\
		\beta_5& := \|\Lambdab^{k+1} -\Lambdab^k\|_\F,\ \beta_6 := \|\Thetab^{k+1} -\Thetab^k\|_\F.
	\end{aligned}
\end{equation}

\subsection{Convergence analysis}
In this subsection, we present convergence analysis of the proposed ADMM algorithm. 
For convenience, let us set $\Ab^k:=(\Lb^k,\Sb^k,\Ub^k,\Vb^k;\Lambdab^k,\Thetab^k)$.
Suppose that $\{\Ab^k\}$ is a sequence generated by Algorithm \ref{alg:admm}. 
The next lemma shows that the sequence of function values $\{\Lcal_\rho(\Ab^k)\}$ satisfies a kind of \emph{sufficient descent} condition at each iteration.

\begin{lemma}\label{Sufficient decrease}
	For a sufficiently large penalty parameter $\rho$, we choose any
		$\gamma\leq \frac{1}{K+2\rho}$. Then the ADMM iterates $\{\Ab^k\}$ satisfy the inequality
	\begin{equation}\label{decrease_alf}
		\begin{aligned}
		&\Lcal_\rho(\Ab^{k})-\Lcal_\rho(\Ab^{k+1}) \\
		\geq & \frac{\epsilon}{2}\left(\|\Lb^{k+1}-\Lb^k\|_\F^2+\|\Sb^{k+1}-\Sb^k\|_\F^2  +\|\Ub^{k+1}-\Ub^k\|_\F^2 \right. \\
		& \left.+\|\Vb^{k+1}-\Vb^k\|_\F^2\right) +\frac{1}{\rho} \left(\|\Lambdab^{k+1}-\Lambdab^k\|_\F^2+\|\Thetab^{k+1}-\Thetab^k\|_\F^2 \right)
	\end{aligned}
	\end{equation}
	for some sufficiently small $\epsilon>0$ and all $k\geq 1$. 
\end{lemma}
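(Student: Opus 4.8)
The plan is to telescope the one-step change $\Lcal_\rho(\Ab^k)-\Lcal_\rho(\Ab^{k+1})$ through the six updates of the Gauss--Seidel scheme \eqref{update_step}, so that it decomposes into four primal block-descents plus the ascent caused by the dual updates \eqref{dual_update_Lambda}--\eqref{dual_update_Theta}. Concretely I would write
\begin{equation}
\Lcal_\rho(\Ab^k)-\Lcal_\rho(\Ab^{k+1}) = D_\Lb + D_\Sb + D_\Ub + D_\Vb + D_{\mathrm{dual}},
\end{equation}
where each $D_\bullet$ is the change in $\Lcal_\rho$ when only the block $\bullet$ advances from its $k$-th to its $(k{+}1)$-th value. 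The aim is to lower-bound the four primal terms by $\tfrac{\rho}{2}\|(\cdot)^{k+1}-(\cdot)^k\|_\F^2$ and to absorb the negative $D_{\mathrm{dual}}$ once $\rho$ is large.

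For the $\Lb$-, $\Ub$- and $\Vb$-subproblems the update is an \emph{exact} minimization of the corresponding slice of $\Lcal_\rho$. Each such slice is a convex function (namely $f(\cdot,\Sb^k)$ for $\Lb$ by Assumption~\ref{assump_convex}, and the indicator $g$ for $\Ub,\Vb$) plus a linear term plus the quadratic penalty $\tfrac{\rho}{2}\|\cdot\|_\F^2$, hence it is $\rho$-strongly convex; the standard estimate ``$\phi$ is $\sigma$-strongly convex and $x^{+}=\argmin\phi$ imply $\phi(x)-\phi(x^{+})\geq\tfrac{\sigma}{2}\|x-x^{+}\|^2$'' then yields $D_\Lb\geq\tfrac{\rho}{2}\|\Lb^{k+1}-\Lb^k\|_\F^2$, and similarly for $D_\Ub,D_\Vb$. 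The $\Sb$-step is the exception because \eqref{solution_S} is a single proximal-gradient step rather than an exact minimizer. Here I would apply the proximal-gradient descent lemma: the smooth part of the $\Sb$-slice has gradient $\nabla_\Sb f(\Lb^{k+1},\cdot)-\Thetab^k+\rho(\cdot-\Vb^k)$, whose Lipschitz modulus is $K+\rho$ by Assumption~\ref{assump_grad_Lipschitz}, and combining its descent inequality with the global optimality of the $\ell_0$ prox \eqref{prox_matrix} gives $D_\Sb\geq(\tfrac{1}{2\gamma}-\tfrac{K+\rho}{2})\|\Sb^{k+1}-\Sb^k\|_\F^2$. The hypothesis $\gamma\leq\tfrac{1}{K+2\rho}$ forces the bracket to be at least $\tfrac{\rho}{2}$, matching the other three blocks; this is precisely where the stated step-size restriction enters.

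The dual term is the obstacle: a direct computation from \eqref{dual_update_Lambda}--\eqref{dual_update_Theta} gives $D_{\mathrm{dual}}=-\tfrac{1}{\rho}(\|\Lambdab^{k+1}-\Lambdab^k\|_\F^2+\|\Thetab^{k+1}-\Thetab^k\|_\F^2)$, which is \emph{negative}, whereas \eqref{decrease_alf} demands a positive multiple of the same quantity. The resolution is to control the dual increments by the primal ones. For $\Lambdab$ I would combine the $\Lb$-optimality condition $\nabla_\Lb f(\Lb^{k+1},\Sb^k)-\Lambdab^k+\rho(\Lb^{k+1}-\Ub^k)=0$ with \eqref{dual_update_Lambda} to express $\Lambdab^{k+1}$ through $\nabla_\Lb f$ and the $\Ub$-increment, then estimate $\|\Lambdab^{k+1}-\Lambdab^k\|_\F$ using the Lipschitz continuity of $\nabla f$ and the nonexpansiveness of the projection $\Pi_{\Scal_+^p}$ in \eqref{solution_U}. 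The analogous estimate for $\Thetab$ must be extracted from the proximal-gradient optimality of \eqref{solution_S} restricted to the support of $\Sb^{k+1}$ together with \eqref{solution_V}. Once a bound of the form $\|\Lambdab^{k+1}-\Lambdab^k\|_\F^2+\|\Thetab^{k+1}-\Thetab^k\|_\F^2\leq M\cdot(\text{squared primal increments})$ holds for some $M=M(K,\rho)$, the claim follows by taking $\rho$ large enough that $\tfrac{\rho}{2}-\tfrac{2M}{\rho}>0$ and setting $\epsilon$ to (twice) this surplus. I expect the $\Thetab$ bound to be the genuinely delicate point, since unlike the $\Lb=\Ub$ constraint whose $\Lb$-side carries the smooth Lipschitz-gradient $f$, the $\Sb=\Vb$ constraint has the nonsmooth $\ell_0$ term on one side and the indicator $g$ on the other, so neither subproblem hands over a clean Lipschitz estimate; one must exploit the hard-thresholding form \eqref{prox_scalar_sol1}, which freezes the active support for small $\gamma$, to argue that on that support the $\Thetab$-increment is again governed by $\nabla_\Sb f$ and the penalty. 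Care is also needed to keep all iterates inside the sublevel set where Assumption~\ref{assump_grad_Lipschitz} applies, which is bootstrapped from the monotone decrease being proved, and to reconcile any coupling between successive iterations that the dual estimate may introduce.
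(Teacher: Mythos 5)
Your telescoping decomposition and all four primal block estimates coincide exactly with the paper's proof: $\rho$-strong convexity of the $\Lb$-, $\Ub$-, $\Vb$-slices gives the $\tfrac{\rho}{2}\|\cdot\|_\F^2$ descents (the paper's \eqref{descent_l}, \eqref{descent_u}, \eqref{descent_v}), the proximal-gradient descent lemma with smooth modulus $K+\rho$ and $\gamma\le\tfrac{1}{K+2\rho}$ gives the $\Sb$-estimate (the paper's \eqref{descent_s}), and the dual updates raise $\Lcal_\rho$ by exactly $\tfrac{1}{\rho}\left(\|\Lambdab^{k+1}-\Lambdab^k\|_\F^2+\|\Thetab^{k+1}-\Thetab^k\|_\F^2\right)$ (the paper's \eqref{descent_lambda}). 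The divergence is entirely in how the negative dual contribution is absorbed, and there your plan has a genuine gap.

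The paper never proves a bound of the form $\|\Lambdab^{k+1}-\Lambdab^k\|_\F^2+\|\Thetab^{k+1}-\Thetab^k\|_\F^2\le M\cdot(\text{squared primal increments})$. Instead it \emph{defines} ``sufficiently large $\rho$'' by condition \eqref{cond_rho}: the hypothesis of the lemma is that $\rho$ strictly dominates, uniformly in $k$, the ratio of dual increments to primal increments along the generated sequence; then $\epsilon$ is chosen so that $\rho-\epsilon$ still dominates, and \eqref{decrease_alf} follows by arithmetic. In other words, the domination you set out to prove is folded into the hypothesis, not established. Your attempt to actually prove it faces concrete obstructions. For $\Lambdab$: combining the $\Lb$-optimality condition with \eqref{dual_update_Lambda} gives $\Lambdab^{k+1}=\nabla_\Lb f(\Lb^{k+1},\Sb^k)+\rho(\Ub^{k+1}-\Ub^k)$, so $\Lambdab^{k+1}-\Lambdab^k$ involves the lagged increments $\Sb^k-\Sb^{k-1}$ and $\Ub^k-\Ub^{k-1}$, which already breaks the per-iteration form of \eqref{decrease_alf} (one would have to pass to a Lyapunov-type function); worse, the term $\rho(\Ub^{k+1}-\Ub^k)$ means $\tfrac{2}{\rho}\|\Lambdab^{k+1}-\Lambdab^k\|_\F^2$ contains a contribution of order $\rho\|\Ub^{k+1}-\Ub^k\|_\F^2$, which the surplus $\tfrac{\rho-\epsilon}{2}\|\Ub^{k+1}-\Ub^k\|_\F^2$ can never absorb, no matter how large $\rho$ is, since both scale linearly in $\rho$. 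For $\Thetab$ the situation is worse still, as you suspected: \eqref{solution_V} gives $\Thetab^{k+1}=-\rho\left[\left(\Sb^{k+1}-\tfrac{1}{\rho}\Thetab^k\right)-\Pi_{\Scal_+^p}\left(\Sb^{k+1}-\tfrac{1}{\rho}\Thetab^k\right)\right]$, which still contains $\Thetab^k$ (a recursion, not a bound), and on the zero entries of $\Sb^{k+1}$ the hard-thresholding operator \eqref{prox_scalar_sol1} yields only an inequality of the form $|s^k_{ij}-\gamma(\cdot)|\le\sqrt{2\gamma C}$, not an equation, so your support-freezing argument controls nothing off the support. The structural reason is that both equality constraints $\Lb=\Ub$ and $\Sb=\Vb$ carry a nonsmooth function (the $\ell_0$ term or the indicator $g$) on at least one side, so no block hands over the clean smooth-gradient representation of the dual variable that this kind of estimate requires; this is precisely why the paper retreats to assuming \eqref{cond_rho}. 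Your primal analysis is correct and identical to the paper's, but the dual-control step, which you correctly identify as the crux, cannot be completed as sketched.
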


\begin{proof}
	See the appendix.
\end{proof}
Based on the above intermediate result, next we demonstrate the boundedness property of the whole sequence of the ADMM iterates.
\begin{proposition}\label{sequence_bound}
Under the conditions of Lemma \ref{Sufficient decrease}, suppose further that the sequence of dual variables $\{\Lambdab^k,\Thetab^k\}$ is bounded. Then $\{\Ab^k\}$ is bounded.
\end{proposition}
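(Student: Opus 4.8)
The plan is to exploit the monotone descent from Lemma~\ref{Sufficient decrease} together with the assumed boundedness of the duals to trap every primal iterate. First I would note that the right-hand side of \eqref{decrease_alf} is nonnegative, so $\{\Lcal_\rho(\Ab^k)\}$ is nonincreasing and hence $\Lcal_\rho(\Ab^k)\le\Lcal_\rho(\Ab^0)<\infty$ for all $k$; finiteness of $\Lcal_\rho(\Ab^0)$ follows from the initialization \eqref{initial}, which is feasible with $\Lb^0+\Sb^0=\check{\Sigmab}\succ0$ and $\Ub^0,\Vb^0\succeq0$. Since the duals are assumed bounded, it remains only to bound the four primal blocks $\Lb^k,\Sb^k,\Ub^k,\Vb^k$.

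Next, the dual recursions \eqref{dual_update_Lambda} and \eqref{dual_update_Theta} give $\Lb^{k}-\Ub^{k}=\tfrac1\rho(\Lambdab^{k-1}-\Lambdab^{k})$ and $\Sb^{k}-\Vb^{k}=\tfrac1\rho(\Thetab^{k-1}-\Thetab^{k})$, so the boundedness of $\{\Lambdab^k,\Thetab^k\}$ immediately bounds $\|\Lb^k-\Ub^k\|_\F$ and $\|\Sb^k-\Vb^k\|_\F$. Because $\Ub^k,\Vb^k\succeq0$ are projections, $g(\Ub^k)=g(\Vb^k)=0$, so from the definition \eqref{augment_function} and $\Lcal_\rho(\Ab^k)\le\Lcal_\rho(\Ab^0)$, dropping the two nonpositive quadratic penalties and bounding the two inner products by Cauchy--Schwarz, I obtain $f(\Lb^k,\Sb^k)+C\|\Sb^k\|_0\le\const$. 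As $C\|\Sb^k\|_0\ge0$, this yields a uniform bound $f(\Lb^k,\Sb^k)\le\const$; moreover each finite $\Lcal_\rho(\Ab^k)$ forces $f(\Lb^k,\Sb^k)<\infty$, so $\Sigmab^k:=\Lb^k+\Sb^k\succ0$ and the log-determinant is well defined.

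The crux is to convert the bound $f(\Lb^k,\Sb^k)\le\const$ into control of the spectrum of $\Sigmab^k$. Writing $\Lb^k=\Ub^k+(\Lb^k-\Ub^k)$ with $\Ub^k\succeq0$, we get $\trace(\Lb^k)\ge\trace(\Lb^k-\Ub^k)\ge-\const$, i.e.\ $\trace(\Lb^k)$ is bounded below. Hence the remaining part of $f$, namely $\mu[\trace(\Sigmab^k\check{\Sigmab}^{-1})-\log\det\Sigmab^k]=f(\Lb^k,\Sb^k)-\trace(\Lb^k)$, is bounded above. Reusing the spectral computation leading to \eqref{sum_compare}, this quantity equals $\sum_{i}(c_i\lambda_{k,i}-\log\lambda_{k,i})$, where the $c_i$ are bounded away from $0$ and above since they depend only on $\check{\Sigmab}$. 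As each summand is bounded below while the total is bounded above, every summand is individually bounded above, and $c_ix-\log x\to\infty$ both as $x\to\infty$ and as $x\to0^+$; this traps all eigenvalues $\lambda_{k,i}$ of $\Sigmab^k$ in a compact subinterval of $(0,\infty)$, so $\|\Sigmab^k\|_\F$ is bounded.

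Finally, from $\Ub^k+\Vb^k=\Sigmab^k-(\Lb^k-\Ub^k)-(\Sb^k-\Vb^k)$ and the three bounds just established, $\|\Ub^k+\Vb^k\|_\F$ is bounded; since $0\preceq\Ub^k\preceq\Ub^k+\Vb^k$ and likewise for $\Vb^k$, operator-norm monotonicity bounds $\|\Ub^k\|_\F$ and $\|\Vb^k\|_\F$ separately, whence $\|\Lb^k\|_\F$ and $\|\Sb^k\|_\F$ follow by the triangle inequality. Together with the bounded duals this shows $\{\Ab^k\}$ is bounded. I expect the main obstacle to be precisely the third paragraph: because the ADMM does not constrain $\Lb^k$ (or $\Sb^k$) to be positive semidefinite, one cannot bound $\|\Lb^k\|_\F$ directly from $\trace(\Lb^k)$ as in Lemma~\ref{lem_diverg}; the remedy is to bound the spectrum of the PSD combination $\Sigmab^k$ first and then recover the individual blocks through the PSD auxiliary variables $\Ub^k,\Vb^k$.
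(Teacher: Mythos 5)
Your proof is correct, and at the decisive step it takes a genuinely different route from the paper's. Both arguments open the same way: monotone descent from Lemma~\ref{Sufficient decrease} caps $\Lcal_\rho(\Ab^k)$ by its initial value, and the assumed boundedness of the duals controls the cross terms. The paper then concludes by invoking Lemma~\ref{lem_diverg} directly on $(\Lb^k,\Sb^k)$: if $\|\Lb^k\|_\F$ or $\|\Sb^k\|_\F$ were unbounded, $f(\Lb^k,\Sb^k)$ and hence $\Lcal_\rho(\Ab^k)$ would diverge, a contradiction; boundedness of $\Ub^k,\Vb^k$ is then extracted from the quadratic-minus-linear terms. You instead (i) read the primal residual bounds straight off the dual updates, $\Lb^k-\Ub^k=\tfrac1\rho(\Lambdab^{k-1}-\Lambdab^k)$, which is cleaner than the paper's lower-bounding of those quadratic terms; and (ii) bound $\trace(\Lb^k)$ below via $\Ub^k\succeq0$, trap the spectrum of the positive definite sum $\Sigmab^k=\Lb^k+\Sb^k$ through the computation \eqref{sum_compare}, and only then recover $\Ub^k,\Vb^k$ (by PSD ordering) and finally $\Lb^k,\Sb^k$ (by the triangle inequality). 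What each buys: the paper's proof is shorter because it reuses Lemma~\ref{lem_diverg} wholesale; but, as you correctly flag in your closing paragraph, that lemma is stated and proved for pairs in $\Dscr$, i.e., with $\Lb^k\succeq0$ and $\Sb^k\succeq0$ (its proof needs nonnegative eigenvalues to pass from $\|\Lb^k\|_\F\to\infty$ to $\trace(\Lb^k)\to\infty$, and needs $\Lb^k+\Sb^k\succ0$ for nonnegativity of the KL term), whereas the ADMM never projects $\Lb^k,\Sb^k$ onto the PSD cone --- only $\Ub^k,\Vb^k$. Your detour through the spectrum of $\Sigmab^k$ and the PSD auxiliaries closes exactly this gap, so yours is the more rigorous of the two arguments. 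Two cosmetic points: the quadratic penalties in \eqref{augment_function} are nonnegative (they become nonpositive only after being moved to the side of the identity on which you drop them), and since Lemma~\ref{Sufficient decrease} is stated for $k\ge1$, the uniform cap should formally be $\Lcal_\rho(\Ab^1)$ rather than $\Lcal_\rho(\Ab^0)$ --- an offset by a constant that the paper's own proof also commits, and which affects nothing.
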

\begin{proof}
 Using the argument in the proof of Lemma \ref{lem_lower_semiconti},
we can show that the augmented Lagrangian $\Lcal_\rho$ is lower semicontinuous. 
Let $\Ab^0$ be the initialization of Algorithm~\ref{alg:admm}.
According to Lemma \ref{Sufficient decrease}, for $k\geq 1$ we have
	\begin{equation}\label{L_rho_upperbound}
		\begin{aligned}
			&\Lcal_\rho(\Ab^0)>\Lcal_\rho(\Ab^k)  \\ &=f(\Lb^k,\Sb^k) +C\|\Sb^k\|_0+\frac{\rho}{2} \|\Lb^k-\Ub^k\|_\F^2 + \frac{\rho}{2} \|\Sb^k-\Vb^k\|_\F^2 \\
			 &\quad -\innerprod{\Lambdab^k}{\Lb^k-\Ub^k} 
			-\innerprod{\Thetab^k}{\Sb^k-\Vb^k}.
		\end{aligned}
\end{equation}
Exploiting the boundedness assumption on $\{\Lambdab^k,\Thetab^k\}$, we can show without difficulty that the quadratic function $\frac{\rho}{2} \|\Lb^k-\Ub^k\|_\F^2-\innerprod{\Lambdab^k}{\Lb^k-\Ub^k}$ is bounded from below, and that the function value diverges to infinity if $\|\Lb^k\|_\F$ or $\|\Ub^k\|_\F$ tends to infinity separately. The same argument applies to the quadratic function $\frac{\rho}{2} \|\Sb^k-\Vb^k\|_\F^2-\innerprod{\Thetab^k}{\Sb^k-\Vb^k}$.
	
Now we are ready to prove the boundedness of $\{\Ab^k\}$. 
Suppose first that $\{\Lb^k\}$ is unbounded, which means that there exists a subsequence, still written as $\Lb^k$, such that
$\|\Lb^k\|_\F\to\infty$. Then we have
	$$\Lcal_\rho(\Ab^k) \geq f(\Lb^k,\Sb^k) +C\|\Sb^k\|_0+\mathrm{const.}.$$
It follows from Lemma \ref{lem_diverg} that $\Lcal_\rho(\Ab^k)$ approaches infinity, which is a contradiction to \eqref{L_rho_upperbound}. Therefore, $\{\Lb^k\}$ remains bounded. In the same manner, we can derive the boundedness of $\{\Sb^k\}$. In addition, boundedness of $\{\Ub^k\}$ and $\{\Vb^k\}$ can be shown via properties of the quadratic functions which are observed right after \eqref{L_rho_upperbound}.
 Consequently, the boundedness of $\{\Ab^k\}$ is established.
\end{proof}

By the Bolzano--Weierstrass theorem, Proposition~\ref{sequence_bound} implies that there exists a  subsequence $\{\Ab^{k_j}\}$ that converges to a cluster point $\Ab^*$. The next result shows that two successive iterates are eventually close to each other.

\begin{proposition}
    Under the conditions of Lemma \ref{Sufficient decrease}, we have $\lim_{k\to\infty}\|\Ab^{k+1}-\Ab^k\|_\F^2=0$.
\end{proposition}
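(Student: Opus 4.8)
The plan is to turn the per-step sufficient decrease of Lemma~\ref{Sufficient decrease} into a convergent series whose summands are exactly the squared increments $\|\Ab^{k+1}-\Ab^k\|_\F^2$. First I would observe that the right-hand side of \eqref{decrease_alf} is a positive combination of the six squared Frobenius differences appearing in the definition of $\Ab^k=(\Lb^k,\Sb^k,\Ub^k,\Vb^k;\Lambdab^k,\Thetab^k)$, where the blocks $\Lb,\Sb,\Ub,\Vb$ carry the coefficient $\epsilon/2$ and the blocks $\Lambdab,\Thetab$ carry the coefficient $1/\rho$. Setting $c:=\min\{\epsilon/2,\,1/\rho\}>0$, the inequality \eqref{decrease_alf} immediately yields the simpler bound
\begin{equation}
\Lcal_\rho(\Ab^{k})-\Lcal_\rho(\Ab^{k+1}) \geq c\,\|\Ab^{k+1}-\Ab^k\|_\F^2
\end{equation}
for all $k\geq 1$, since $\|\Ab^{k+1}-\Ab^k\|_\F^2$ is by definition the sum of those six squared differences.

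Next I would sum this inequality from $k=1$ to $k=N$. The left-hand side telescopes, giving
\begin{equation}
c\sum_{k=1}^{N}\|\Ab^{k+1}-\Ab^k\|_\F^2 \leq \Lcal_\rho(\Ab^{1})-\Lcal_\rho(\Ab^{N+1}).
\end{equation}
By Lemma~\ref{Sufficient decrease} the sequence $\{\Lcal_\rho(\Ab^k)\}$ is monotonically non-increasing, so it converges to a (possibly $-\infty$) limit. Provided this limit is finite, the right-hand side above stays bounded uniformly in $N$, so the nonnegative series $\sum_{k\geq 1}\|\Ab^{k+1}-\Ab^k\|_\F^2$ converges and therefore its general term must tend to zero, which is precisely the desired conclusion.

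The hard part will be establishing that $\{\Lcal_\rho(\Ab^k)\}$ is bounded from below, i.e.\ that its limit is finite. I would argue this by combining boundedness of the iterates with lower semicontinuity: under the conditions in force, Proposition~\ref{sequence_bound} guarantees that $\{\Ab^k\}$ lies in a bounded set, whose closure is compact in finite dimension; since $\Lcal_\rho$ is lower semicontinuous (as noted in the proof of Proposition~\ref{sequence_bound}), it attains a finite minimum on this compact set and hence is bounded below along the sequence. Alternatively, one can argue structurally: the term $f+C\|\Sb\|_0$ is bounded below because $F$ admits a global minimum by Theorem~\ref{thm_exist}, the indicator terms $g(\Ub),g(\Vb)$ are nonnegative, and each mixed quadratic such as $\frac{\rho}{2}\|\Lb^k-\Ub^k\|_\F^2-\innerprod{\Lambdab^k}{\Lb^k-\Ub^k}$ is bounded below once the dual variables are bounded, exactly as observed right after \eqref{L_rho_upperbound}. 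Either route supplies a finite lower bound, closing the telescoping argument and forcing $\lim_{k\to\infty}\|\Ab^{k+1}-\Ab^k\|_\F^2=0$.
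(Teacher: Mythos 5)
Your telescoping core coincides with the paper's own proof of this proposition: monotone decrease of $\{\Lcal_\rho(\Ab^k)\}$ from Lemma~\ref{Sufficient decrease}, summation of \eqref{decrease_alf} over $k$, and convergence of the resulting nonnegative series forcing its general term to vanish. Your constant $c=\min\{\epsilon/2,\,1/\rho\}$ is a harmless repackaging of the paper's term-by-term treatment, and that part of the argument is fine.

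The genuine issue is the finiteness of the limit of $\{\Lcal_\rho(\Ab^k)\}$, which you correctly flag as the hard part but do not resolve under the stated hypotheses. The proposition assumes only ``the conditions of Lemma~\ref{Sufficient decrease},'' yet both of your routes require boundedness of the dual sequence $\{(\Lambdab^k,\Thetab^k)\}$: the first invokes Proposition~\ref{sequence_bound}, whose conclusion is available only under that \emph{additional} assumption, and the second needs bounded duals to control the mixed quadratic terms, exactly as in the discussion after \eqref{L_rho_upperbound}. So as written you prove a conditional version of the statement, with a hypothesis the statement does not grant. The paper instead asserts outright that the lower boundedness of the objective (from the proof of Lemma~\ref{lem_diverg}) transfers ``straightforwardly'' to $\Lcal_\rho(\Ab)\geq C_1$, with no assumption on the duals. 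To be fair to you, that one-line claim is itself loose: $\Lcal_\rho$ in \eqref{augment_function} is \emph{not} bounded below as a function of all six blocks --- taking $\Lb=\Ub+\Lambdab/\rho$ and letting $\|\Lambdab\|_\F\to\infty$ sends the coupling terms to $-\infty$ like $-\|\Lambdab\|_\F^2/(2\rho)$ while $f$ grows only linearly --- so a rigorous lower bound must be established \emph{along the iterates}, which is precisely where extra structure or an assumption such as the dual boundedness you reach for has to enter. To close the gap relative to the statement as posed, you would need to justify $\inf_k \Lcal_\rho(\Ab^k)>-\infty$ without assuming bounded duals (e.g., via the subproblem optimality conditions), or make explicit that the proposition implicitly carries that assumption.
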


\begin{proof}
According to the proof of Lemma~\ref{lem_diverg}, the objective function in \eqref{optim_model_2} is bounded from below. It is then straightforward to show that the same holds for the augmented Lagrangian \eqref{augment_function}, i.e., $\Lcal_\rho(\Ab)\geq C_1$ for some constant $C_1$.
By Lemma \ref{Sufficient decrease}, the sequence $\Lcal_\rho(\Ab^k)$ of function values is monotonically decreasing, and hence must converge by the monotone convergence theorem.
More precisely, we have
    \begin{equation}
		\begin{aligned}
			\infty&>\Lcal_\rho(\Ab^1)-C_1 \\
			&\geq\Lcal_\rho(\Ab^1)-\Lcal_\rho(\Ab^{N+1}) \\
			&=\sum_{k = 1}^N\left[\Lcal_\rho(\Ab^{k})-\Lcal_\rho(\Ab^{k+1}) \right] \\
			&\geq \frac{\epsilon}{2} \left(\sum_{k = 1}^N \|\Lb^{k+1}-\Lb^k\|_\F^2+\sum_{k = 1}^N\|\Sb^{k+1}-\Sb^k\|_\F^2  \right.\\
			&\quad \left.+\sum_{k = 1}^N\|\Ub^{k+1}-\Ub^k\|_\F^2+\sum_{k = 1}^N\|\Vb^{k+1}-\Vb^k\|_\F^2 \right)\\
			&\quad +\frac{1}{\rho} \left( \sum_{k = 1}^N\|\Lambdab^{k+1}-\Lambdab^k\|_\F^2+\sum_{k = 1}^N\|\Thetab^{k+1}-\Thetab^k\|_\F^2 \right).
		\end{aligned}
    \end{equation}
	Letting $N\rightarrow\infty$, we have
	\begin{equation}\label{variable_difference}
		\begin{aligned}
			\sum_{k=1}^{\infty}\left\|\Lb^{k+1}-\Lb^k\right\|_\F^2<\infty, \ \sum_{k=1}^{\infty}\left\|\Sb^{k+1}-\Sb^k\right\|_\F^2<\infty, \\
			\sum_{k=1}^{\infty}\left\|\Ub^{k+1}-\Ub^k\right\|_\F^2<\infty, \ \sum_{k=1}^{\infty}\left\|\Vb^{k+1}-\Vb^k\right\|_\F^2<\infty, \\
			\sum_{k=1}^{\infty}\left\|\Lambdab^{k+1}-\Lambdab^k\right\|_\F^2<\infty,\ \text{and}\ \sum_{k=1}^{\infty}\left\|\Thetab^{k+1}-\Thetab^k\right\|_\F^2<\infty,
		\end{aligned}
	\end{equation}
 which clearly implies
	$\lim_{k\to\infty}\|\Ab^{k+1}-\Ab^k\|_\F^2=0$.
\end{proof}

\begin{remark}
	Global convergence of the ADMM is hard to establish without the \emph{Kurdyka--\L ojasiewicz (K\L) property} in the presence of a nonconvex or nonsmooth term in the objective function \cite{attouch2013convergence,bolte2014proximal, yang2017alternating}.
	It is known that \emph{semialgebraic} functions and \emph{continuous subanalytic} functions over closed domains satisfy the K\L\ property. While our objective function can be written as a sum of functions of these two types, 
	 it is in general not true that the K\L\ functions have the additive property. For this reason, we are unable to prove that our augmented Lagrangian \eqref{augment_function} satisfies the K\L\ property.  Therefore, the issue of global convergence appears very challenging and will be left for future research.
\end{remark}

In the final part of this subsection, we show that \emph{if} the whole sequence of the ADMM iterates converges, the cluster point is indeed a locally optimal solution to \eqref{optim_model_2}.

\begin{proposition}
		Assume that $\{\Ab^k\}$, the sequence generated by Algorithm \ref{alg:admm}, converges and the cluster point $\Ab^*=(\Lb^*, \Sb^*,\Ub^*,\Vb^*;\Lambdab^*,\Thetab^*)$ is still feasible for \eqref{optim_model_2}. Then $\Ab^*$ is a P-stationary point of \eqref{optim_model_2} and hence a local minimizer.
\end{proposition}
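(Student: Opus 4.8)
The plan is to verify that the cluster point $\Ab^*$ fulfils every condition \eqref{P-stat-primal-1_v2}--\eqref{P-stat-Lagrangian-prox_v2} of Definition~\ref{def_P-stat_pt_v2}, after which Point~2 of Theorem~\ref{thm_optimality} immediately yields local optimality. Since the \emph{whole} sequence $\{\Ab^k\}$ is assumed to converge to $\Ab^*$, every block converges ($\Lb^k\to\Lb^*$, $\Sb^k\to\Sb^*$, $\Ub^k\to\Ub^*$, $\Vb^k\to\Vb^*$, $\Lambdab^k\to\Lambdab^*$, $\Thetab^k\to\Thetab^*$) and in particular $\|\Ab^{k+1}-\Ab^k\|_\F\to0$. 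The strategy is then to pass to the limit in each of the six update rules \eqref{update_step}.

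First I would dispose of the feasibility and ``smooth'' optimality conditions. Letting $k\to\infty$ in the dual updates \eqref{dual_update_Lambda}--\eqref{dual_update_Theta} and using $\|\Ab^{k+1}-\Ab^k\|_\F\to0$ forces the consensus identities $\Lb^*=\Ub^*$ and $\Sb^*=\Vb^*$; the primal conditions \eqref{P-stat-primal-1_v2}--\eqref{P-stat-primal-2_v2} are then precisely the assumed feasibility $(\Lb^*,\Sb^*)\in\Dscr$. Next, passing to the limit in the closed forms \eqref{solution_U}--\eqref{solution_V}, using continuity (nonexpansiveness) of $\Pi_{\Scal_+^p}$ and substituting the consensus identities, gives $\Lb^*=\Pi_{\Scal_+^p}(\Lb^*-\tfrac1\rho\Lambdab^*)$ and $\Sb^*=\Pi_{\Scal_+^p}(\Sb^*-\tfrac1\rho\Thetab^*)$. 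I would then invoke the variational characterization of the projection onto the positive semidefinite cone, namely $\Mb=\Pi_{\Scal_+^p}(\Nb)$ iff $\Mb\succeq0$, $\Mb-\Nb\succeq0$, and $\trace[\Mb(\Mb-\Nb)]=0$; applied to the two identities this produces simultaneously the dual feasibility \eqref{P-stat-dual_v2} and the complementary slackness \eqref{P-stat-complem_slack_v2}. For \eqref{P-stat-Lagrangian-grad-1_v2} I would write the first-order stationarity of the smooth strictly convex $\Lb$-subproblem \eqref{subprob_L_equivalent}, i.e.\ $\nabla_\Lb f(\Lb^{k+1},\Sb^k)-\Lambdab^k+\rho(\Lb^{k+1}-\Ub^k)=O$, and pass to the limit using continuity of $\nabla_\Lb f$ on $\{\Lb+\Sb\succ0\}$ (valid near $\Ab^*$ by feasibility) together with $\Lb^*=\Ub^*$.

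The main obstacle is the proximal condition \eqref{P-stat-Lagrangian-prox_v2}, because the $\ell_0$ proximal map in \eqref{solution_S} is the discontinuous hard-thresholding operator, so the limit cannot be commuted with $\prox$ in a naive way. My plan is to sidestep this by returning to the \emph{variational} definition \eqref{prox_matrix}. Writing $\Wb^k$ for the bracketed argument of \eqref{solution_S} and setting $q_k(\Vb):=C\|\Vb\|_0+\tfrac1{2\gamma}\|\Vb-\Wb^k\|_\F^2$, the update says $q_k(\Sb^{k+1})\le q_k(\Vb)$ for every fixed $\Vb$. Using $\Sb^*=\Vb^*$ and continuity of $\nabla_\Sb f$ near $\Ab^*$ one has $\Wb^k\to\Wb^*:=\Sb^*-\gamma(\nabla_\Sb f(\Lb^*,\Sb^*)-\Thetab^*)$. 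I would then take $\liminf_k$ on the left and $\lim_k$ on the right: lower semicontinuity of $\|\cdot\|_0$ (established within the proof of Lemma~\ref{lem_lower_semiconti}) gives $\liminf_k\|\Sb^{k+1}\|_0\ge\|\Sb^*\|_0$ while the quadratic term converges, so $\liminf_k q_k(\Sb^{k+1})\ge q_*(\Sb^*)$ with $q_*(\Vb):=C\|\Vb\|_0+\tfrac1{2\gamma}\|\Vb-\Wb^*\|_\F^2$, and $\liminf_k q_k(\Sb^{k+1})\le\lim_k q_k(\Vb)=q_*(\Vb)$. Hence $q_*(\Sb^*)\le q_*(\Vb)$ for all $\Vb$, i.e.\ $\Sb^*$ minimizes $q_*$ and is therefore a valid value of $\prox_{\gamma C\|\cdot\|_0}(\Wb^*)$, which is exactly \eqref{P-stat-Lagrangian-prox_v2}. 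This semicontinuity route automatically settles the delicate boundary regime $|\,\cdot\,|=\sqrt{2\gamma C}$ of the scalar rule \eqref{prox_scalar_sol1}, where a direct entrywise passage to the limit would be ambiguous.

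Having verified \eqref{P-stat-primal-1_v2}--\eqref{P-stat-Lagrangian-prox_v2}, the point $\Ab^*$ is P-stationary, and since $(\Lb^*,\Sb^*)\in\Dscr$ by hypothesis, Point~2 of Theorem~\ref{thm_optimality} concludes that $(\Lb^*,\Sb^*)$ is a local minimizer of \eqref{optim_model_2}, completing the proof.
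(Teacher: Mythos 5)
Your proposal is correct and follows the same broad strategy as the paper (pass to the limit in each update rule, then invoke Point~2 of Theorem~\ref{thm_optimality}), but it differs in two substantive ways, both to your credit. First, for dual feasibility \eqref{P-stat-dual_v2} and complementary slackness \eqref{P-stat-complem_slack_v2}, the paper writes down the KKT conditions \eqref{KKT_subprob_U} of the $\Ub$- and $\Vb$-subproblems with an auxiliary multiplier sequence $\Yb^{k+1}=\Lambdab^k-\rho(\Lb^{k+1}-\Ub^{k+1})$, argues it converges, and identifies its limit with $\Lambdab^*$; you instead pass to the limit in the closed-form projections \eqref{solution_U}--\eqref{solution_V} and invoke the Moreau-type characterization $\Mb=\Pi_{\Scal_+^p}(\Nb)$ iff $\Mb\succeq0$, $\Mb-\Nb\succeq0$, $\trace[\Mb(\Mb-\Nb)]=0$. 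Both are valid; yours is cleaner since it avoids the auxiliary multipliers altogether. Second, and more importantly, for the prox condition \eqref{P-stat-Lagrangian-prox_v2} the paper literally says ``take limits on both sides of \eqref{solution_S},'' which glosses over the discontinuity of the hard-thresholding operator; your variational argument via $q_k(\Sb^{k+1})\le q_k(\Vb)$, lower semicontinuity of $\|\cdot\|_0$, and $\Wb^k\to\Wb^*$ is a genuine repair of this gap and is the argument the paper should have given. One caveat: what your argument delivers is that $\Sb^*$ is \emph{a} minimizer of $q_*$, i.e.\ membership in the set-valued prox; in the degenerate boundary case where some entry satisfies $|w^*_{ij}|=\sqrt{2\gamma C}$ and $s^*_{ij}=w^*_{ij}\neq0$, this does not coincide with the single-valued convention \eqref{prox_scalar_sol1} used in \eqref{P-stat-Lagrangian-prox_v2}. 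This is a defect of the paper's convention rather than of your proof (the paper's own limit-taking fails in the same case, and the local-optimality argument in Theorem~\ref{thm_optimality} Point~2 only ever uses the inequalities implied by set-membership), but strictly speaking you should either exclude that boundary case or note that \eqref{P-stat-Lagrangian-prox_v2} is to be read in the set-valued sense.
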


\begin{proof}
 Taking the limit of \eqref{dual_update_Lambda}, we have $\Lambdab^{*}  = \Lambdab^* - \rho(\Lb^{*}-\Ub^{*})$ and thus 
	\begin{equation}\label{lstar}
	   	\Lb^{*}=\Ub^{*}.
	\end{equation}
	Since the subproblem \eqref{subprob_U} for $\Ub$ is smooth, we can write down its  KKT conditions as
	\begin{subequations}\label{KKT_subprob_U}
		\begin{align}
			\Ub^{k+1} & \succeq 0 \\ 
			\Yb^{k+1} & \succeq 0 \\ 
			\trace(\Ub^{k+1} \Yb^{k+1}) & = 0 \\ 
			\Lambdab^k - \rho(\Lb^{k+1}-\Ub^{k+1})-\Yb^{k+1} & = O,  \label{relat_Lambda_Y}
		\end{align}
	\end{subequations}
	where $\Yb$ is the Lagrange multiplier for $\Ub\succeq 0$. It follows from \eqref{relat_Lambda_Y} that $\Yb^{k+1}=\Lambdab^k - \rho(\Lb^{k+1}-\Ub^{k+1})$ must also converge to some $\Yb^*$.
 In view of \eqref{lstar}, we immediately have $\Lambdab^*=\Yb^* $. 
 Now we let $k\to\infty$, and  the relations in \eqref{KKT_subprob_U} can be rewritten as 
 $\Lb^{*}\succeq 0$,  $\Lambdab^{*} \succeq 0$, and $\trace(\Lambdab^{*} \Lb^{*}) = 0$.

 Similarly, a limit argument on \eqref{dual_update_Theta} yields $\Sb^*=\Vb^*$, and a KKT analysis on \eqref{subprob_V} gives the conditions
$\Sb^{*} \succeq 0$, $\Thetab^{*} \succeq 0$, and $\trace(\Thetab^{*}\Sb^{*}) = 0$. 

	The first-order optimality condition of the subproblem \eqref{subprob_L} is $\nabla_\Lb f(\Lb^{k+1},\Sb^k) -\Lambdab^{k+1} + \rho(\Lb^{k+1}-\Ub^k) = O$. Taking the limit as $k\to\infty$, we arrive at \eqref{P-stat-Lagrangian-grad-1_v2}.
	In a similar fashion, if we take limits on both sides of \eqref{solution_S}, we obtain \eqref{P-stat-Lagrangian-prox_v2}.
	The remaining condition \eqref{P-stat-primal-2_v2} holds by the premise of theorem. Thus we have shown that the cluster point $\Ab^*$
 is a P-stationary point, and its local optimality is a consequence of Theorem~\ref{thm_optimality}. 
\end{proof}

\section{Numerical Experiments}\label{sec:numerical}

In this section, we evaluate the performance of Algorithm~\ref{alg:admm} via numerical experiments on synthetic and real economic datasets. All these simulations are implemented in MATLAB (R2022a) on a MacBook Pro with 64 GB RAM and an Apple M1 Max processor.

\subsection{Synthetic data examples}

In this subsection, we conduct Monte Carlo simulations, each having 100 repeated trials, following the steps outlined below.
\begin{enumerate}
\item[i)] 
Randomly generate a matrix $\Gammab\in\Rbb^{p\times r}$ with rank $r$ and a \lyn{sparse} $\hat\Sb\succ 0$ of size $p$, where the cross-sectional dimension $p=40$.
\item[ii)] 
Generate a sample sequence $\{\yb_i\}$ of length $N$ according to the FA model \eqref{generate_model} with $\mub=\zerob$.
\item[iii)]  
Compute the sample covariance matrix
$$\check{\Sigmab} = \frac{1}{N} \sum_{i=1}^{N} \yb_i \yb_i^T.$$
\item[iv)]  
 Select parameters $(C, \mu, \rho)$ through the Cross Validation (CV) and then run Algorithm \ref{alg:admm} to compute the estimate $(\Lb^*, \Sb^*)$.
 \item[v)]  Compute the numerical rank $r^*$ of the optimal  $\Lb^*$ via
 \begin{equation}\label{factor_estimate}
 	r^*:=\underset{i\leq i_{\max}}{\argmax}\ {\lambda_i}/{\lambda_{i+1}},
 \end{equation}	
 where $\lambda_1\geq\lambda_2\geq\cdots\geq\lambda_p$ represent the eigenvalues of $\Lb^{*}$, and $i_{\max}$ denotes the minimum index $i$ satisfying $\lambda_{i+1}/\lambda_{i}<0.05$  in accordance with \cite{ciccone2018factor}.
\end{enumerate}

\subsubsection{Evaluation metrics}
Estimation of the number of factors is an important task in FA. Two performance indices are proposed in \cite{ciccone2018factor} which are also used here:
\begin{itemize}
	\item  the root mean squared error (RMSE) between the true rank $r$ and the numerical rank $r^*$, namely
		\begin{equation}\label{rmse}
				\mathrm{RMSE} = \sqrt{\frac{1}{100}\sum_{t=1}^{100}(r_t^*-r)^{2}},
		\end{equation}
  where $t$ is a label for trials in a Monte Carlo simulation, 
	\item \lyn{the subspace recovery accuracy}
	\begin{equation}\label{ratio}
		\text{ratio}(\mathbf{\Gamma}^{*}):=\operatorname{tr}(\mathbf{\Gamma}^\top\Pb\mathbf{\Gamma})/\operatorname{tr}(\mathbf{\Gamma}^\top\mathbf{\Gamma}),
	\end{equation}
	where $\Gammab$ and $\Gammab^{*}$ are the true and estimated factor loading matrices, respectively, and $\Pb$ denotes the projection matrix onto the column space (range) of $\Gammab^{*}$. 
	Evidently, the above ratio is between $0$ and $1$, and a larger value indicates better alignment of the column spaces of $\Gammab$ and $\Gammab^*$. 
\end{itemize}

\subsubsection{Parameters setting} \label{subsubsec:para_setting}

We set the  tolerance parameter in \eqref{upperbound_error} to be  $\texttt{tol}=10^{-3}$ and the maximum number of iterations $\texttt{iter}=10^4$. The tuning parameters $\mu$ and $C$ represent the trade-off between the low-rankness of $\Lb^*$, the model misfit as measured by the KL divergence, and the sparsity of $\Sb^*$, while $\rho$ is the penalty parameter in the augmented Lagrangian \eqref{augment_function}.
We use a CV procedure to select them. 
The candidate set for $C$ and $\mu$ is \lyn{ $\left\{60,110,\cdots,360\right\}$}
 and  $\rho$ is selected from $\left\{2^0, 2^1, \cdots, 2^5\right\}$. We randomly partition the samples $\yb_i$ generated in Step ii) above into training and validation sets of equal size. Using the optimal solution $(\tilde{\Lb}, \tilde{\Sb})$ computed on the training set under a specific parameter configuration $(C, \mu, \rho)$ and the sample covariance matrix $\check{\Sigmab }^{\vrm}$ computed on the validation set, we define the following score function
\begin{equation}\label{score_func}
	(r^*_{\tilde{\Lb}}+\|\tilde{\Sb}\|_{0}) \Dcal_{\KL} (\tilde{\Lb}+\tilde{\Sb} \| \check{\Sigmab}^{\vrm} ).
\end{equation}
The parameter triple $(C,\mu,\rho)$ corresponding to the minimum score function value is selected.

Next, we consider the selection of the parameter $\gamma$ in the update of $\Sb$ in \eqref{solution_S} which comes from the proximal operator \eqref{prox_scalar_sol1} and can be interpreted as a step size.
We must emphasize that in our implementation, once $\gamma$ is selected, it does not change in each simulation, including the CV and the final solution with the chosen parameters $(C,\mu,\rho)$.
If  $\gamma$ is large, it will immediately set $\Sb$ into a zero matrix due to the thresholding operation, thus leading to convergence of the $\Sb$ update.
On the other hand, when $\gamma$ is extremely small, the $\ell_0$ proximal operator \eqref{prox_scalar_sol1} essentially reduces to an identical mapping, and Algorithm \ref{alg:admm} is simply the ADMM for \eqref{optim_model_2} without the sparsity enforcing term $C\|\Sb\|_0$. As a consequence, such a solution does not induce sparsity on $\Sb$. This point can also be observed from our simulations.
Indeed,  we take $\gamma=10^{-2}, 10^{-4}, 10^{-6}, 10^{-8}$, respectively, and perform Monte Carlo simulations on the same dataset with \lyn{$N=1000$, the true rank $r = 4$, and a signal-to-noise ratio (SNR, defined as $\|\Gammab\Gammab^\top\|_\F/\|\hat\Sb\|_\F$) equal to $6$}. To speed up the convergence of Algorithm~\ref{alg:admm}, a projection step onto the cone of semidefinite matrices was incorporated into the updates of $\Lb$. Fig.~\ref{fig:gamma} shows the results in terms of three metrics:
the KL divergence between the computed $\Lb^*+\Sb^*$ and the \emph{true} covariance matrix $\Sigmab$, $\|\Sb^*\|_0$, and the running time. The subfigure~\ref{fig:gamma}(ii)  indicates that when $\gamma=10^{-2}$, the returned $\Sb^*$ is a zero matrix. In contrast, when $\gamma=10^{-6}$ or $\gamma=10^{-8}$,  $\Sb^*$ has poor sparsity. The choice $\gamma=10^{-4}$ appears to produce the best estimate $\Sb^*$. 
In addition, Table~\ref{table:gamma} presents the RMSE \eqref{rmse} of the numerical rank estimate \eqref{factor_estimate} for two Monte Carlo simulations in which $N = \lyn{1000}$ and the true rank is $r = \lyn{4}$ and $r = 10$, respectively. 
In view of the results in Fig.~\ref{fig:gamma} and Table~\ref{table:gamma}, \lyn{the value $\gamma=10^{-4}$ seems reasonable for synthetic datasets with $\SNR=6$}.

\begin{table}[h]
	\centering
	\caption{RMSE of the numerical rank of $\Lb^*$}
	\lyn{
	\begin{tabular}{ccccc}
		\toprule
		\quad\quad  & $\gamma=10^{-2}$ & $\gamma=10^{-4}$ & $\gamma=10^{-6}$ & $\gamma=10^{-8}$  \\
		\midrule
		$r=4$ & 0  & 0 & 0 & 1.3342 \\
		$r=10$ & 0.1000  &0 & 0.1000 & 2.8513 \\
		\bottomrule
	\end{tabular}
}
	\label{table:gamma}
\end{table}
\begin{figure}[t]
	\begin{minipage}[b]{0.3\linewidth}
		\centering
		\centerline{\includegraphics[width=3.3cm]{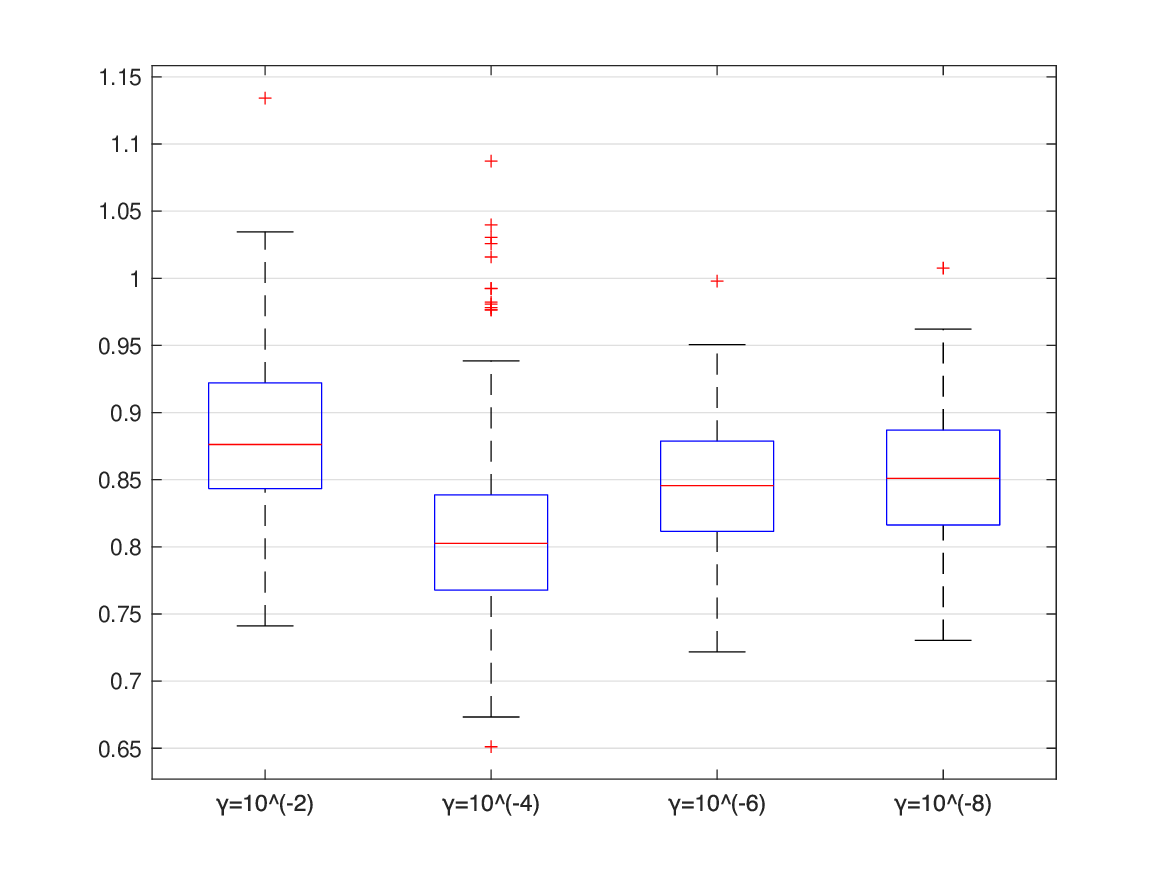}}
		\centerline{\small (i) $\Dcal_{\KL} (\Lb^*+\Sb^* \| \Sigmab$)}
	\end{minipage}
	\hfill
	\begin{minipage}[b]{0.3\linewidth}
		\centering
		\centerline{\includegraphics[width=3.3cm]{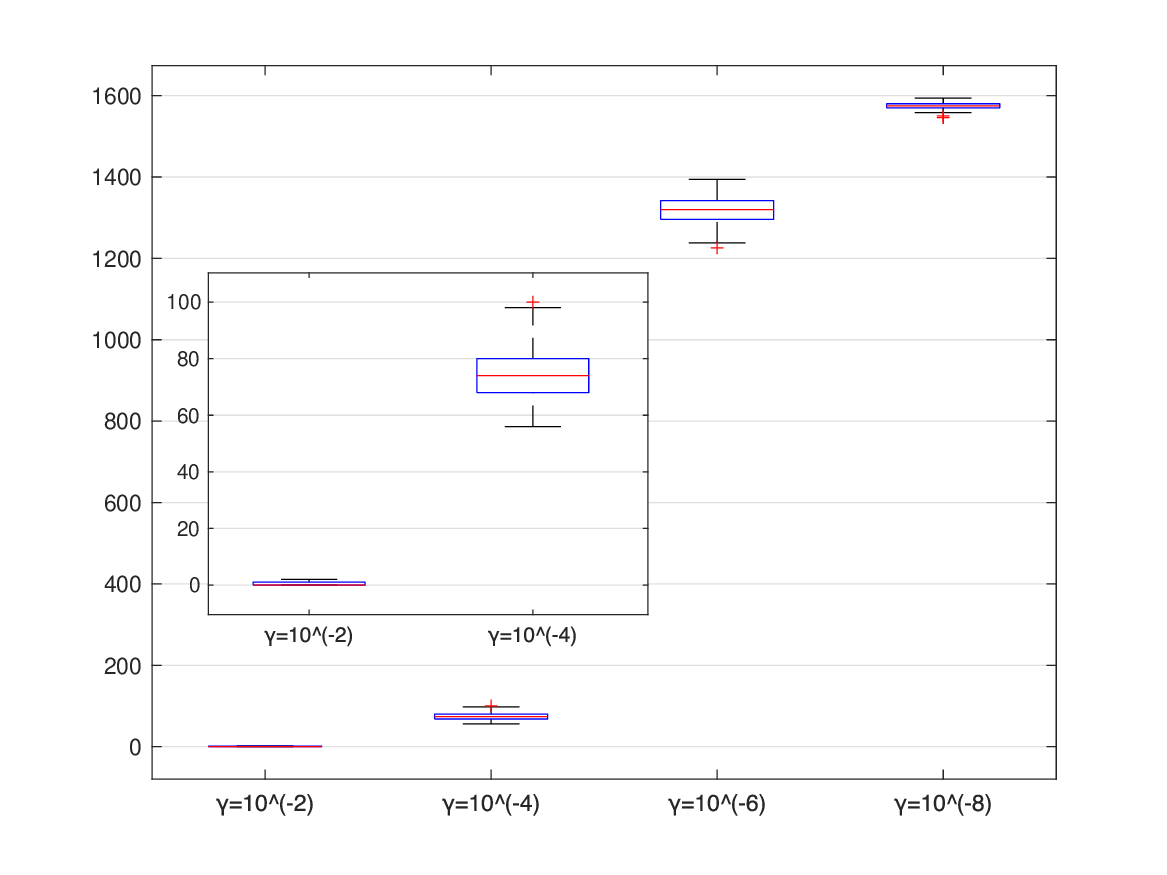}}
		\centerline{\small (ii) $\|\Sb^*\|_0$ }
	\end{minipage}
    \hfill
    \begin{minipage}[b]{.3\linewidth}
		\centering
		\centerline{\includegraphics[width=3.3cm]{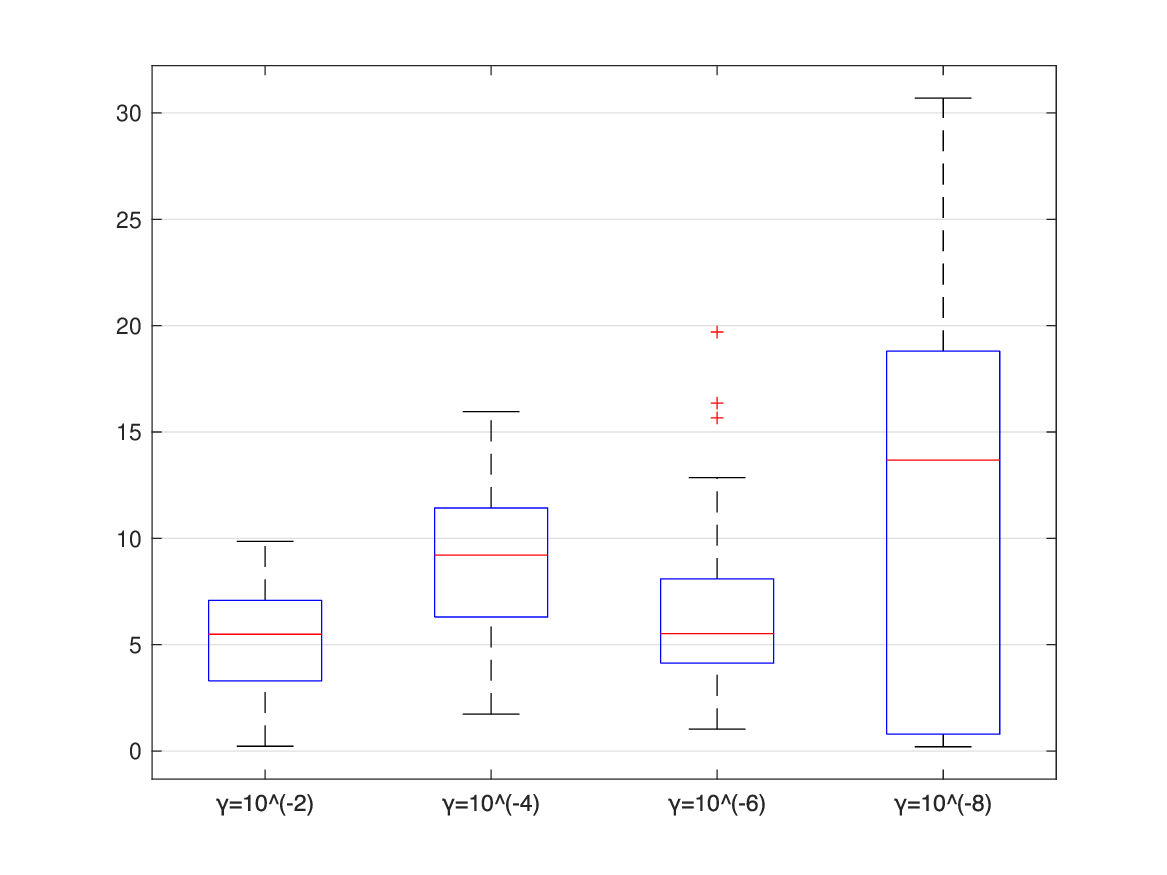}}
		\centerline{\small (iii) Runtime}
	\end{minipage}
	\lyn{
	\caption{The performance of the $\ell_0$ ADMM in terms of three metrics when $\gamma$ takes the value of $10^{-2}$, $10^{-4}$, $10^{-6}$, and $10^{-8}$, respectively, with $N=1000$, the true $r=4$, $\mathrm{SNR}=6$ and the sparsity level of $\hat\Sb$ being $\|\hat \Sb\|_0/p^2=0.055$.}
	\label{fig:gamma}
	}

\end{figure}

\subsubsection{Comparison with the $\ell_1$ ADMM} 

In order to examine the sparsification effect of the $\ell_0$ norm on $\Sb$ via a comparison, we consider the convex relaxation model of \eqref{optim_model_2}, where the sparsity enforcing term is replaced by $C\|\Sb\|_1:=\sum_{i, j} C|s_{ij}|$.  The resulting problem can also be solved via the ADMM, called $\ell_1$ ADMM, once we replace the $\ell_0$ proximal operator in Algorithm~\ref{alg:admm} with the corresponding $\ell_1$ version whose scalar form reads as
\begin{equation}\label{prox_l1}
	\prox_{\gamma C |\cdot|_1} (s)=\sign(s)\cdot \max\{|s|-\gamma C, 0\},
\end{equation}
and the matrix form is also an elementwise application of the scalar form above. The $\ell_1$ proximal operator is also called \emph{soft thresholding operator} in the literature. \lyn{Such a comparison is also motivated by  the fact that the globally optimal solution to the $\ell_1$ relaxed optimization problem is \emph{not} even locally optimal in the $\ell_0$ sense, see \cite[Theorem~1]{FA_CDC_24} for the precise statement}.

\lyn{We separately consider two structures of the true $\hat\Sb$ under $\mathrm{SNR}=6$: diagonal and nondiagonal sparse.
When $\hat \Sb$ is set as a diagonal matrix, more precisely, a scaled identity matrix of the form $\alpha \Ib$ with $\alpha>0$, 
panel (a) of Fig.~\ref{fig:0_1_compare_total} shows graphically the estimates $\Sb^*$ computed from the $\ell_0$ and the $\ell_1$ versions of the ADMM at different values of $\gamma$. 
In the case of $\gamma=10^{-4}$,} we note that the $\ell_0$ ADMM correctly recognizes the underlying diagonal structure of $\hat{\Sb}$, while the $\ell_1$ version returns a zero matrix which is similar to the outcome of the $\ell_0$~ADMM for $\gamma = 10^{-2}$. See the comment in Subsubsec.~\ref{subsubsec:para_setting}. 
 When $\gamma=10^{-6}$, the $\ell_1$ ADMM returns only a few nonzero entries along the diagonal of $\Sb^*$ with small numerical values and fails to reveal the diagonal structure of $\hat \Sb$.  For the case of $\gamma=10^{-8}$, the $\ell_1$ ADMM treats the optimization problem that essentially neglects the sparsity enforcing term $C\|\Sb\|_1$, much similar to the $\ell_0$ case. 
 Indeed, both the $\ell_0$ and the $\ell_1$ proximal operators reduce to the identity mapping when $\gamma\to 0$.
The simulation results are also not far from each other: the $\ell_1$ estimate of $\Sb^*$ has many spurious off-diagonal nonzeros and the situation of the $\ell_0$ estimate can be seen from panel (ii) of Fig.~\ref{fig:gamma}. 
\lyn {
Moreover, when $\hat \Sb$ is set as a nondiagonal sparse matrix, panel (b) of Fig.~\ref{fig:0_1_compare_total} clearly demonstrates the superior performance of the $\ell_0$ ADMM in 
estimating the true sparse structure.}
In summary, the $\ell_0$ ADMM with a suitable choice of $\gamma$ appears to be able to identify the sparse structure of $\hat \Sb$ in a more reliable fashion.
\begin{figure*}
	\begin{minipage}[b]{0.19 \linewidth}
		\centering
		\centerline{\includegraphics[width=0.95\linewidth]{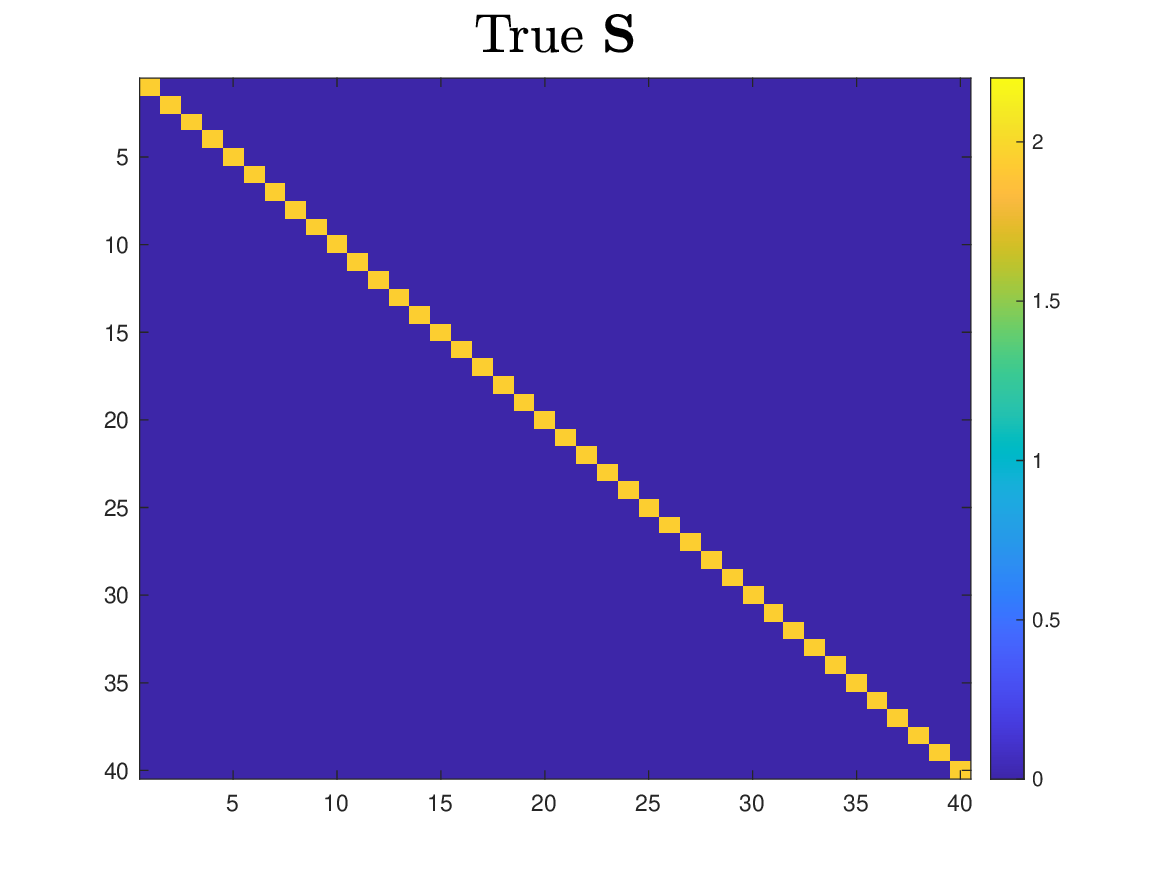}}
	\end{minipage}
	\hfill
	\begin{minipage}[b]{0.19\linewidth}
		\centering
		\centerline{\includegraphics[width=0.95\linewidth]{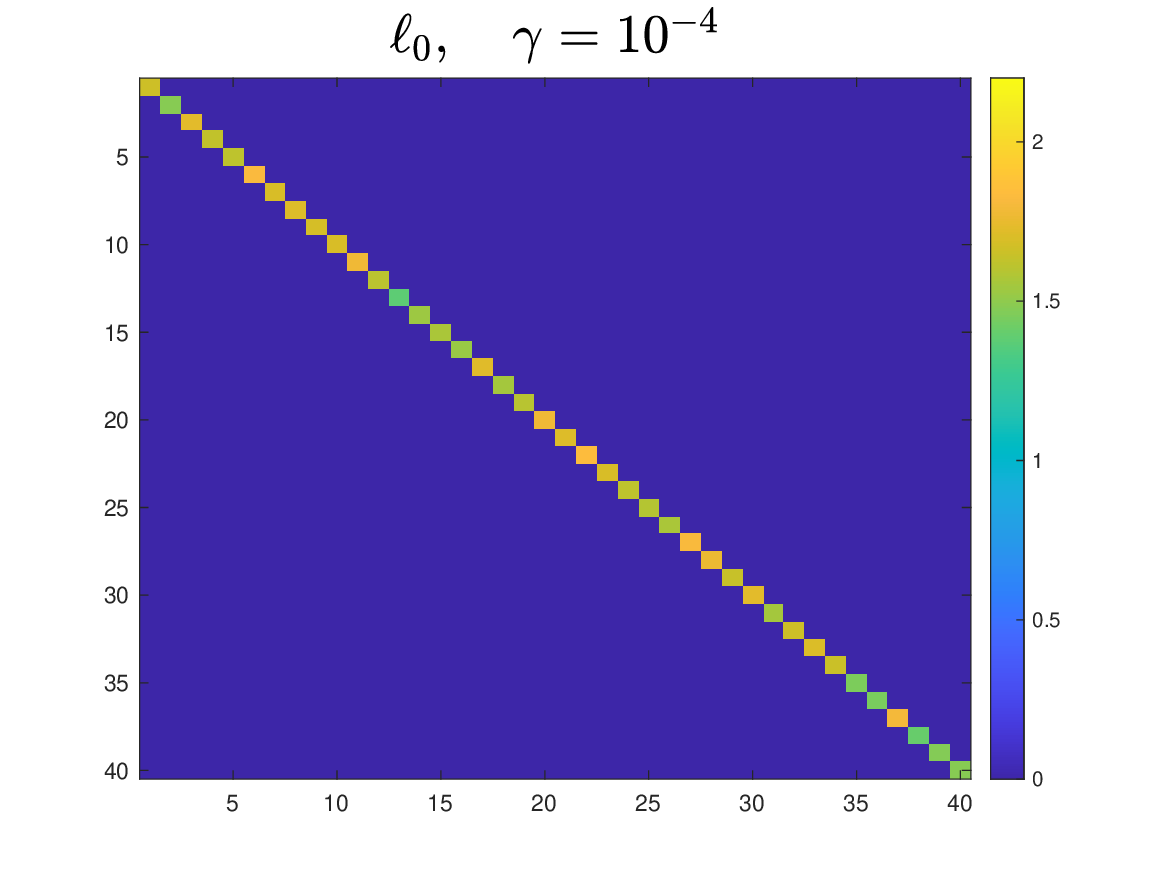}}
	\end{minipage}
	\hfill
	\begin{minipage}[b]{0.19\linewidth}
		\centering
		\centerline{\includegraphics[width=0.95\linewidth]{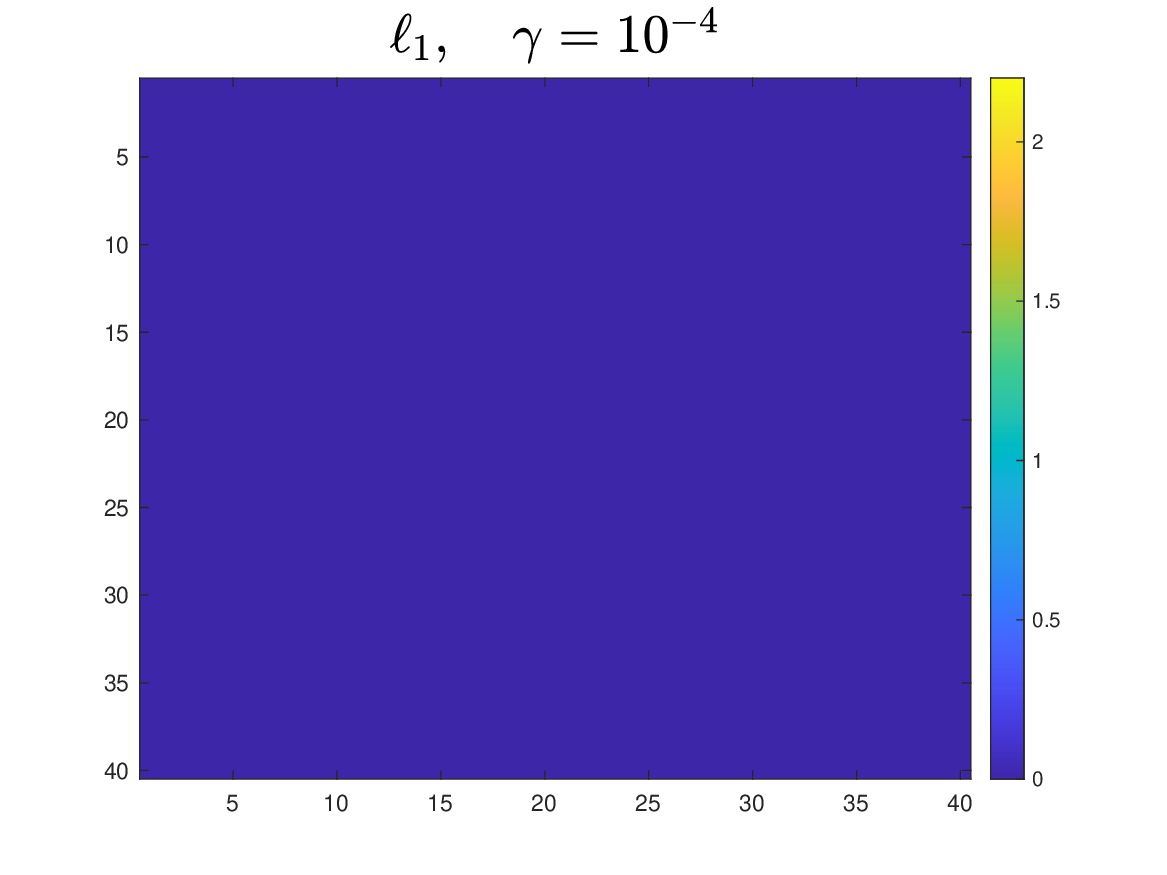}}
	\end{minipage}
	\hfill
	\begin{minipage}[b]{0.19\linewidth}
		\centering
		\centerline{\includegraphics[width=0.95\linewidth]{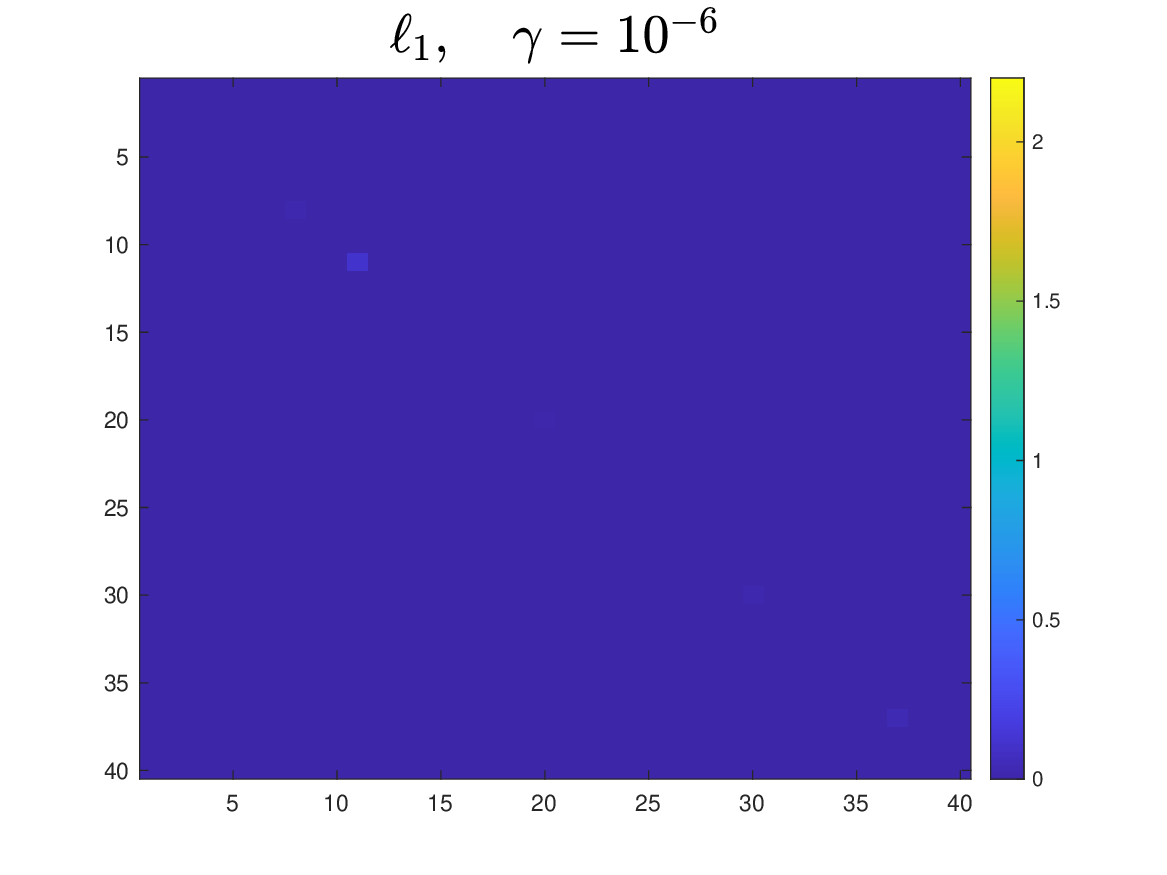}}
	\end{minipage}
	\hfill
	\begin{minipage}[b]{0.19\linewidth}
		\centering
		\centerline{\includegraphics[width=0.95\linewidth]{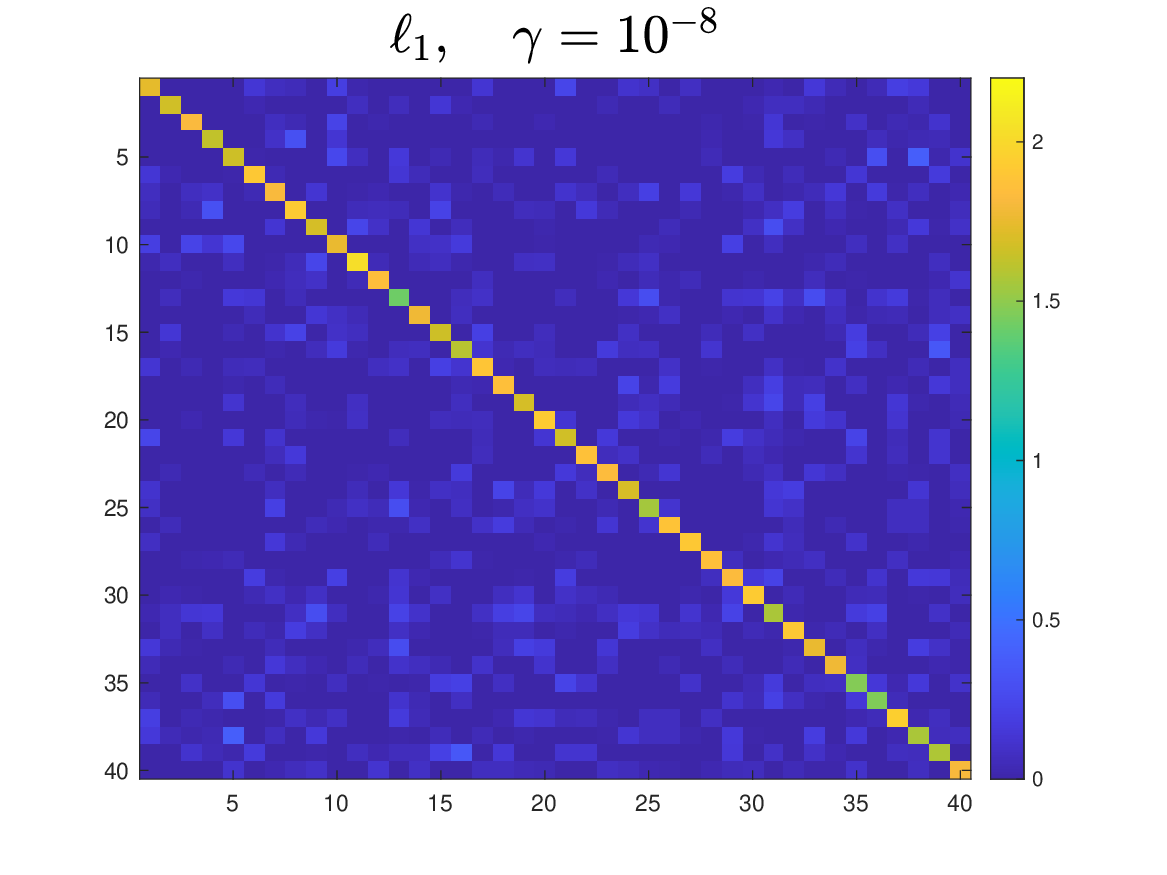}}
	\end{minipage}
	\centerline{\small (a)}
	
	\vspace{0.1cm}
	\begin{minipage}[b]{0.19 \linewidth}
		\centering
		\centerline{\includegraphics[width=0.95\linewidth]{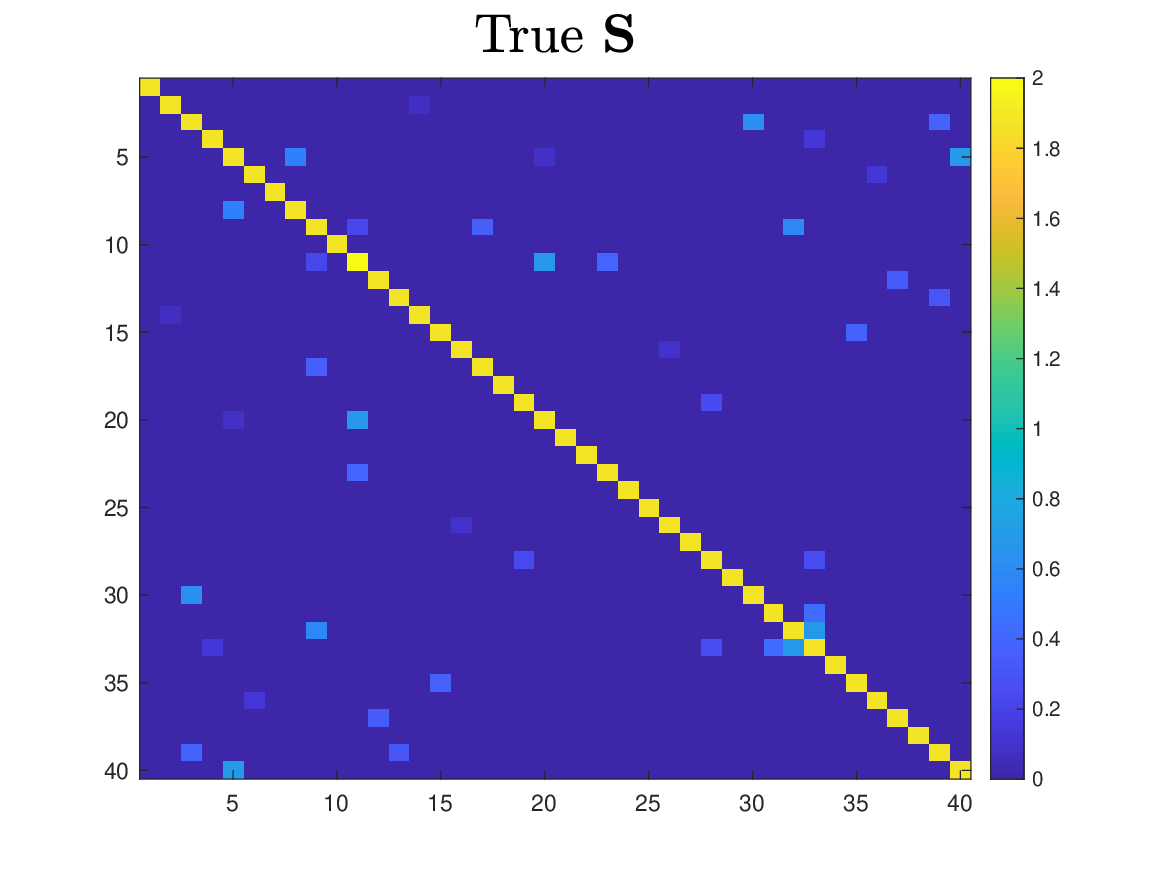}}
	\end{minipage}
	\hfill
	\begin{minipage}[b]{0.19 \linewidth}
		\centering
		\centerline{\includegraphics[width=0.95\linewidth]{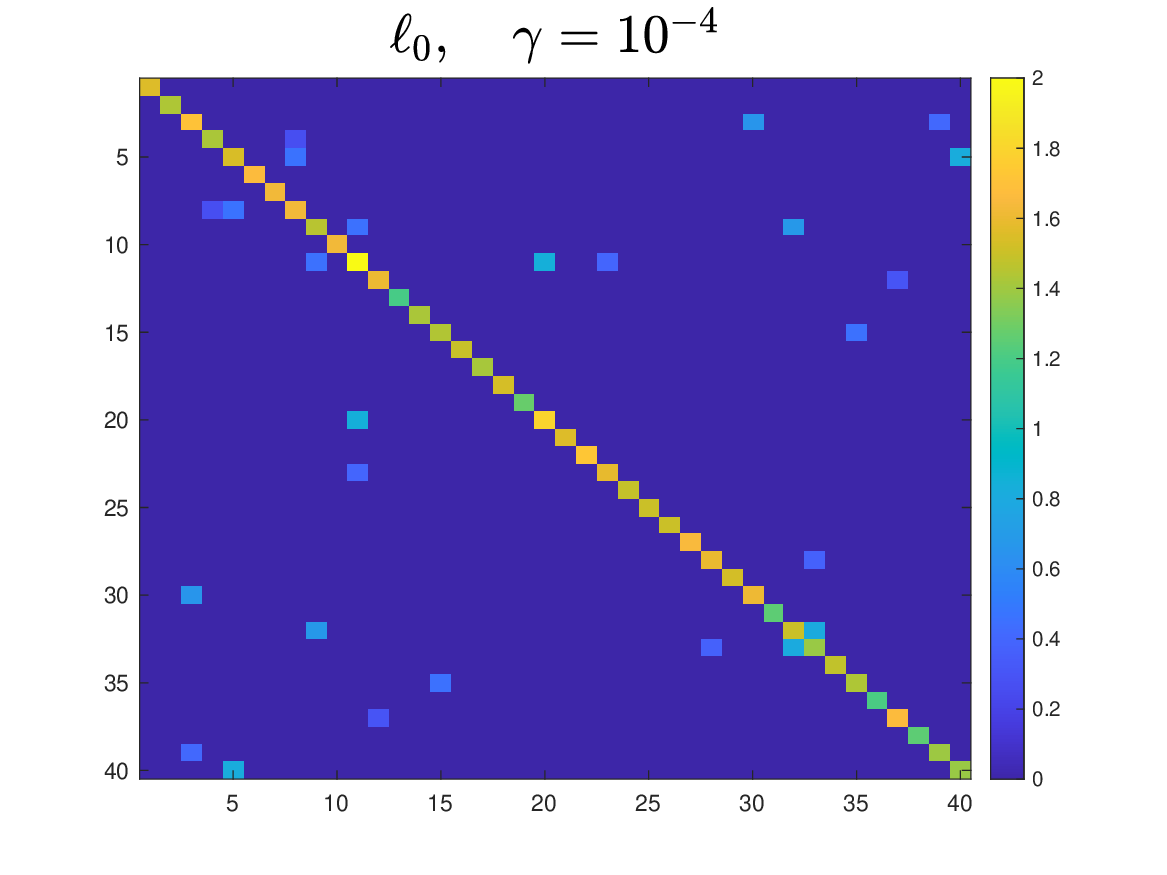}}
	\end{minipage}
	\hfill
	\begin{minipage}[b]{0.19\linewidth}
		\centering
		\centerline{\includegraphics[width=0.95\linewidth]{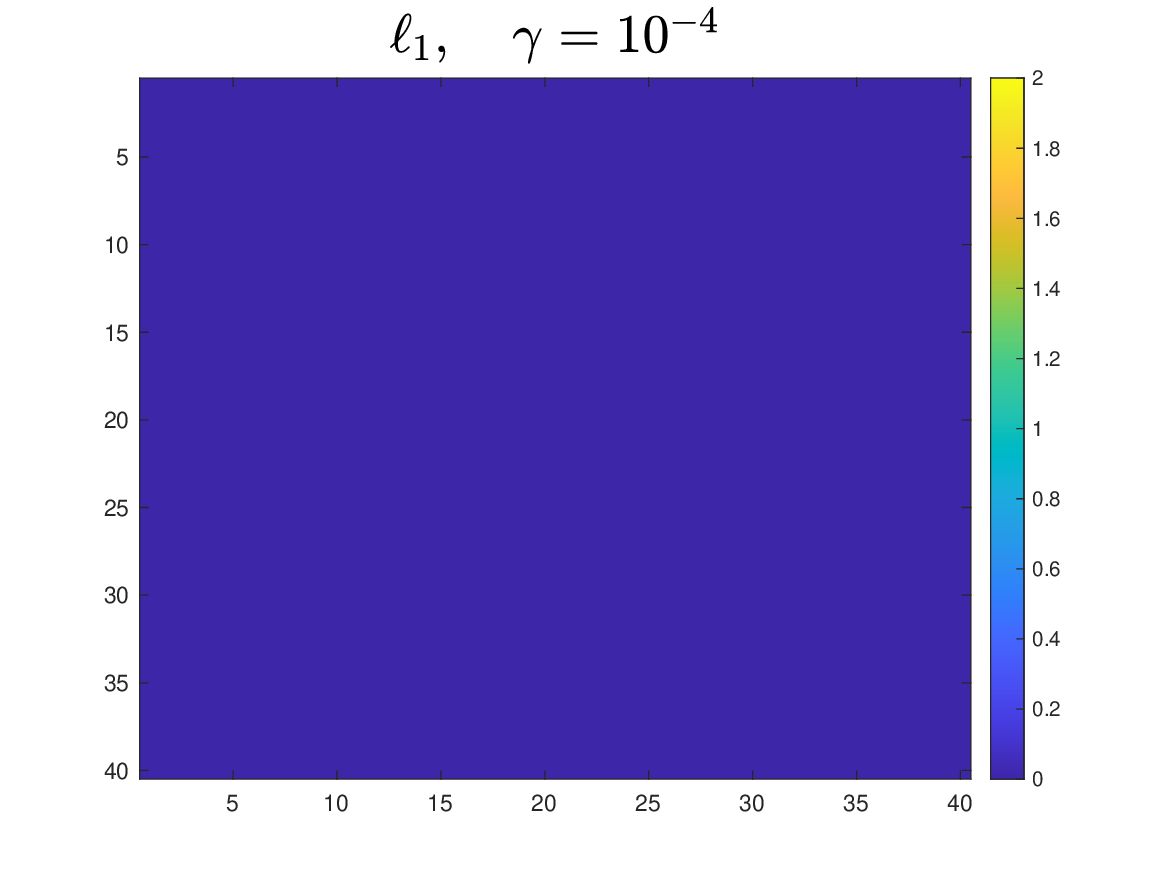}}
	\end{minipage}
	\hfill
	\begin{minipage}[b]{0.19\linewidth}
		\centering
		\centerline{\includegraphics[width=0.95\linewidth]{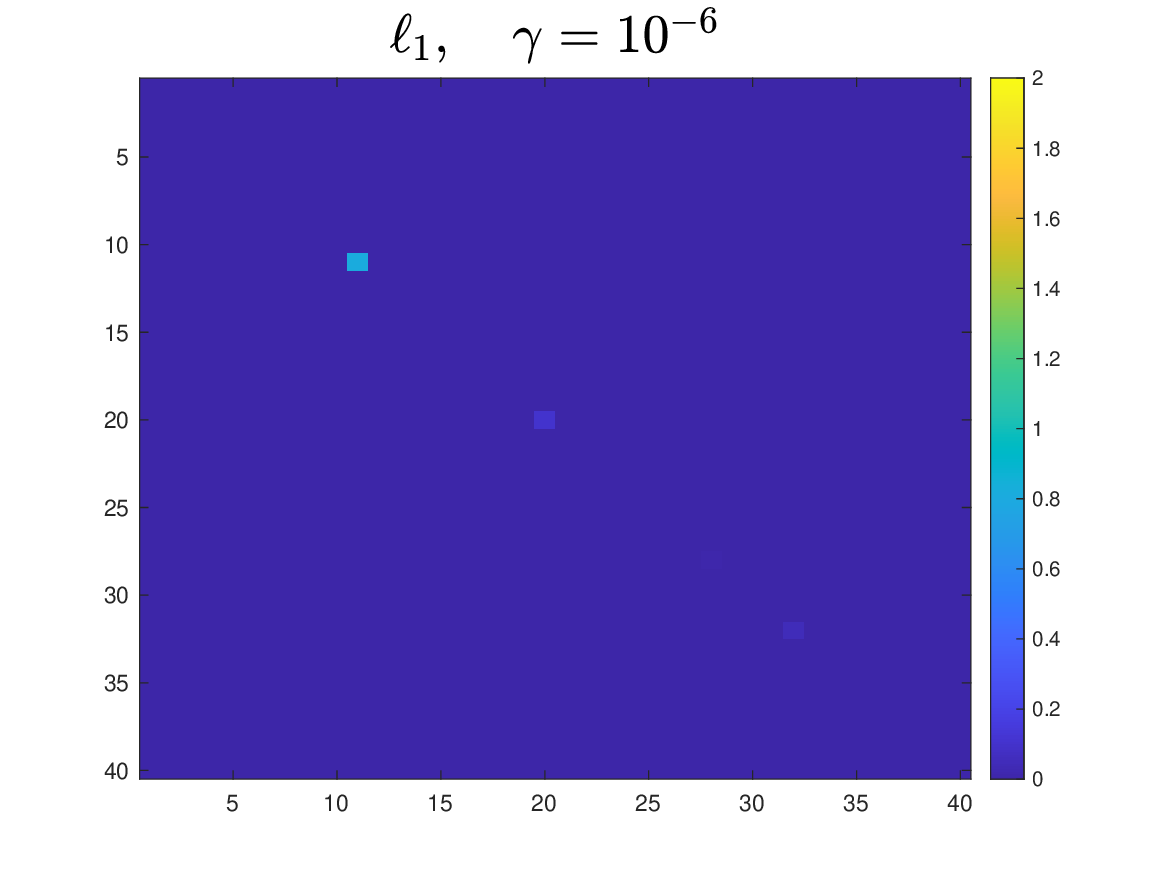}}
	\end{minipage}
	\hfill
	\begin{minipage}[b]{0.19\linewidth}
		\centering
		\centerline{\includegraphics[width=0.95\linewidth]{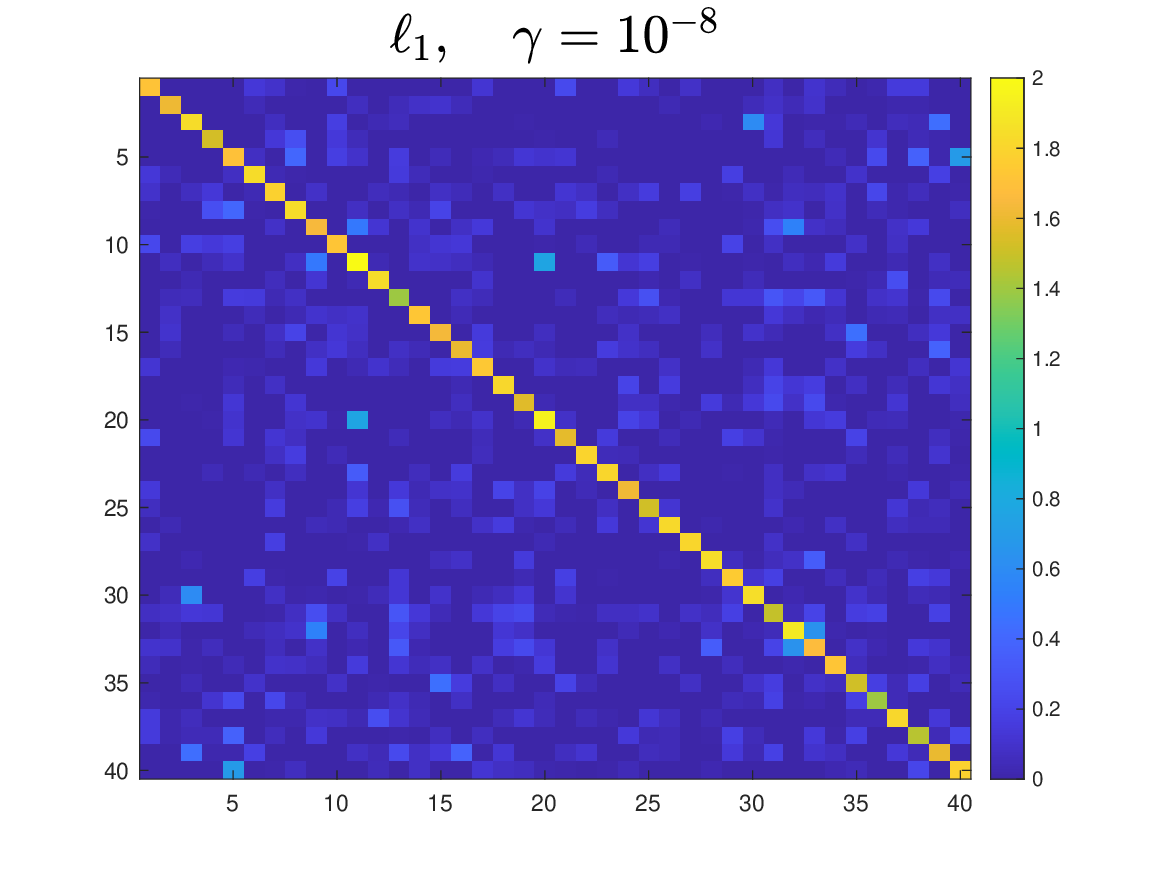}}
	\end{minipage}
	\centerline{\small (b)}
	\caption{\lyn{Comparison of the sparse structure of the estimated $\Sb^*$ under the $\ell_0$ and $\ell_1$ regularizations across different conditions, when $N=1000$, $r=4$ and $\mathrm{SNR}=6$. (a) The true $\hat \Sb$ is diagonal (scaled identity). (b) The true $\hat\Sb$ is nondiagonal sparse.}}
	\label{fig:0_1_compare_total}
\end{figure*}

\subsubsection{Comparison with \cite{ciccone2018factor}}

Similar to the simulation setup in \cite{ciccone2018factor}, \lyn{we set the true sparse component $\hat\Sb$ to be a scaled identity matrix in step i), and  $\SNR=1$. The parameter $\gamma$ is set to $10^{-2}$ in this case. Then,}
we consider two scenarios  in which the true rank of $\Gammab$ is \lyn{$r=4$} and $r=10$, respectively, and perform Monte Carlo simulations \lyn{consisting of $200$ trials} with different data lengths $N$. The subspace recovery index \eqref{ratio} for each trial is shown via the \texttt{boxplot} in Fig.~\ref{fig:ratio_nondiag}.
One can observe that the subspace recovery accuracy increases with the number of available samples. \lyn{Moreover, compared to Fig.~2 in the prior work \cite{ciccone2018factor}, panel (b) of our Fig.~\ref{fig:ratio_nondiag} shows that the method in \cite{ciccone2018factor} performs better in the subspace recovery of $\Gammab$ when the true rank is $r=10$. 
However, panel (a) shows that in the case of $r=4$, the opposite  happens, i.e., our Algorithm~\ref{alg:admm} achieves better performance, even when the outliers (marked with red ``+'') are taken into consideration.}

In addition, we have also generated a set of samples of size \lyn{$N=1000$} and \lyn{$r=4$} to see the effect of parameters $C$, $\mu$, and $\rho$ on the subspace recovery index. More specifically, Fig.~\ref{fig:parameter_cross} illustrates the quantity $\mathrm{ratio}(\mathbf{\Gamma}^{*})$ as a function of $(C, \mu)$ for a fixed $\rho=16$. The surface plot is quite flat which means that Algorithm \ref{alg:admm} achieves robust recovery of the column space of $\Gammab$ under different parameter configurations $(C,\mu)$ which are used in the CV. We add that the figure basically does not change for different values of $\rho$ (with the same range for $(C, \mu)$) and hence they are not reported here.

\begin{figure}[t]
	\begin{minipage}[b]{.47\linewidth}
		\centering
		\centerline{\includegraphics[width=.9\linewidth]{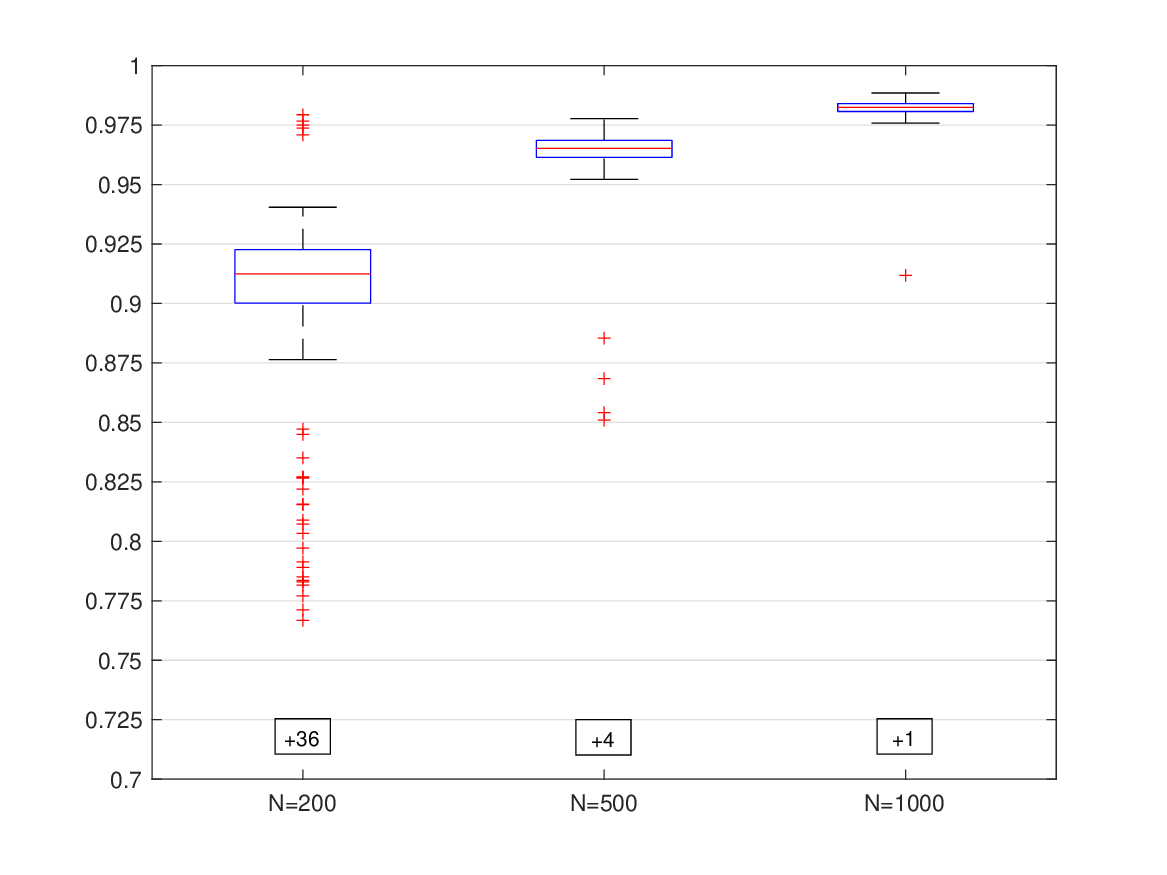}}
		\centerline{(a) \lyn{$r=4$}}
	\end{minipage}
	\hfill
	\begin{minipage}[b]{0.47\linewidth}
		\centering
		\centerline{\includegraphics[width=.9\linewidth]{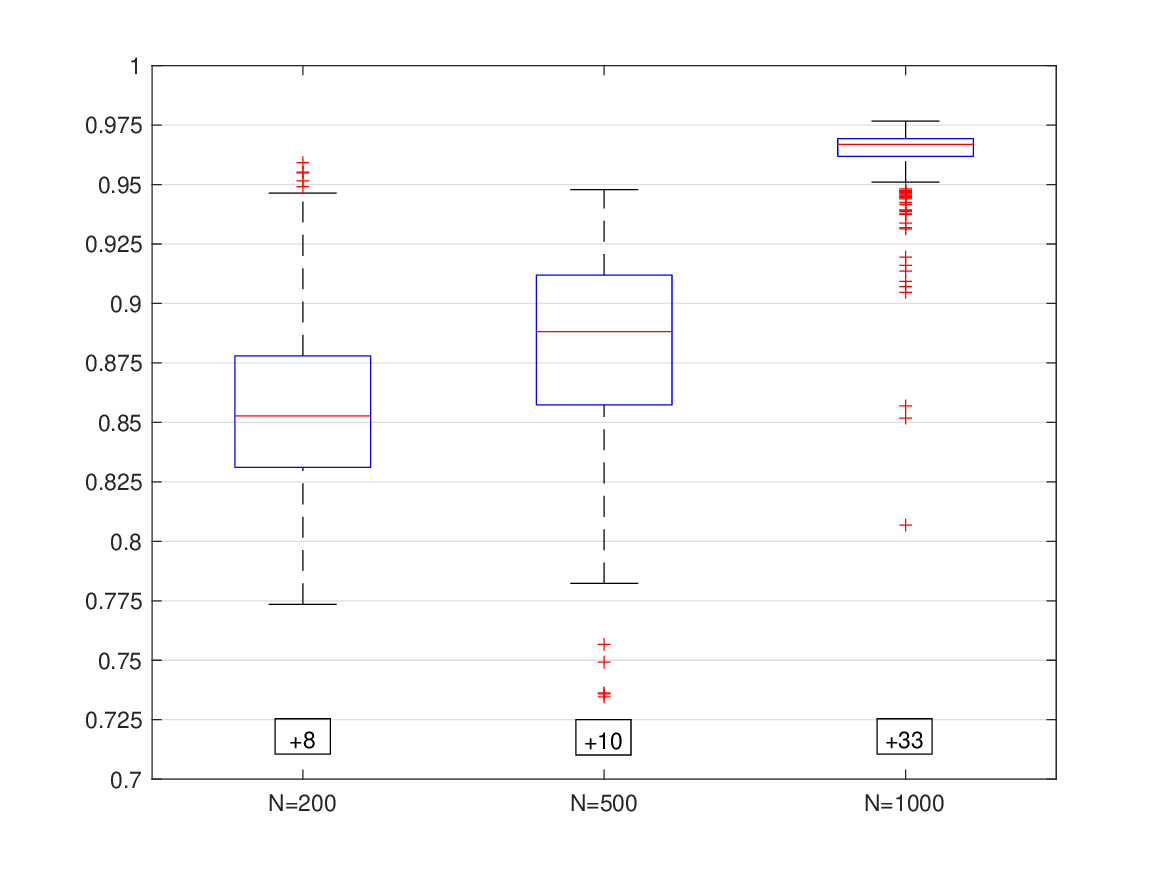}}
		\centerline{(b) $r=10$}
	\end{minipage}
	 \lyn{
	\caption{The subspace recovery accuracy of $\Gammab$ for $N=200$, $500$, and $1000$ in two scenarios: when the true rank $r = 4$ and when $r = 10$, respectively, with the true $\hat\Sb$ being diagonal (scaled identity) and $\mathrm{SNR}=1$. The integer below each boxplot represents the total number of outliers (marked with red ``+'').}
	\label{fig:ratio_nondiag}
	}
\end{figure}

\begin{figure}[t]
	\centering
	\includegraphics[width=.5\linewidth]{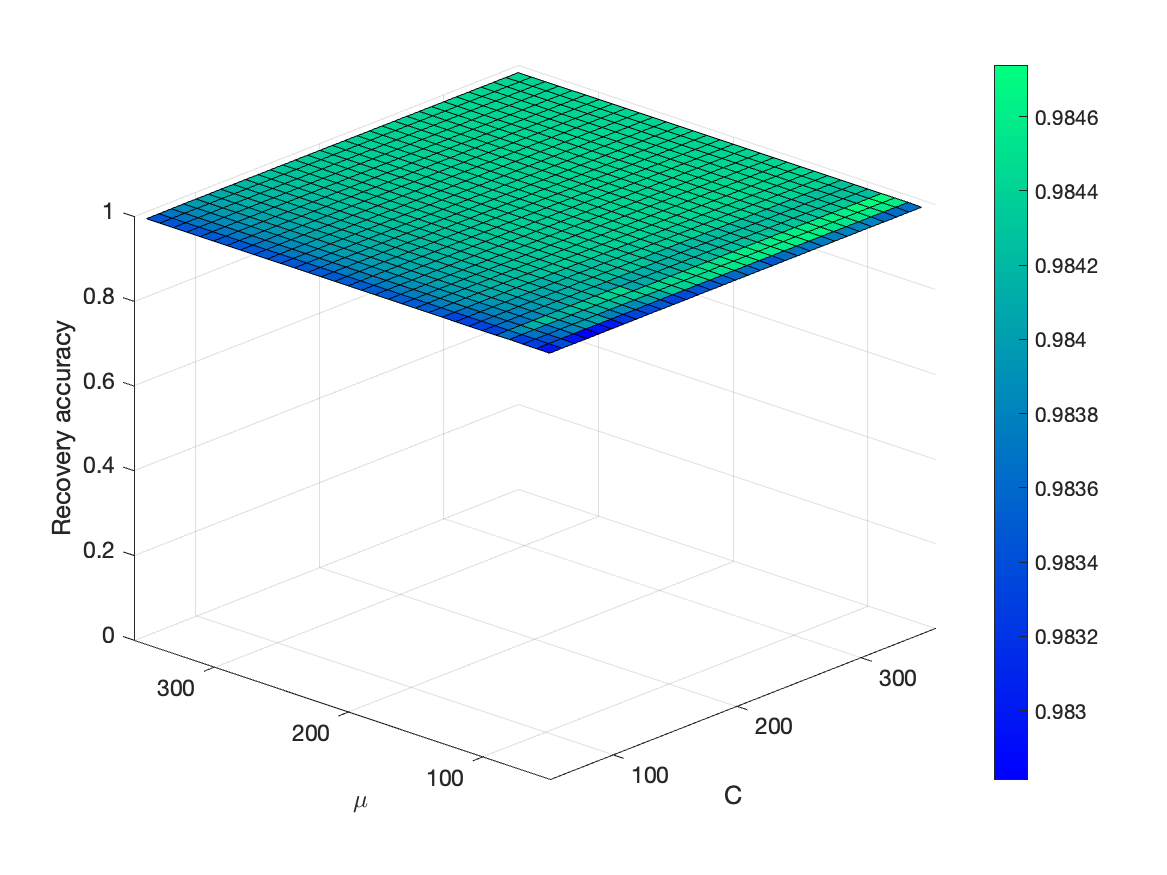}
	\lyn{
	\caption{The subspace recovery accuracy across various parameter configurations $(\mu,C)$ with $\rho=16$ fixed on a dataset of size $N=1000$, the true $r=4$ and $\mathrm{SNR}=1$.}
	\label{fig:parameter_cross}
}
\end{figure}

\subsection{A real data example}

In this subsection, Algorithm \ref{alg:admm} is applied to an economic dataset\footnote{The dataset was sourced from  \url{http://www.stern.nyu.edu/~adamodar/pc/datasets/betas.xls} 
	(downloaded in May 2023).}, which collects the data corresponding to nine financial indicators ($p=9$) across 92 different sectors ($N=92$) of the U.S. economy. These indicators include the Hi-Lo risk, the standard deviation of operating income, the debt/equity ratio, the beta, the unlevered beta, the unlevered beta adjusted for cash, the effective tax rate, the cash/firm value ratio, and the standard deviation of equity. For detailed definitions and calculation of these indicators, we refer the readers to the website written in the footnote.

On this real dataset, since the true factor loading matrix $\Gammab$ and the true noise covariance matrix $\hat\Sb$ are unknown, we are mostly interested in estimating the number of latent factors, i.e., the rank of $\hat\Lb$. 
Similar to simulations on synthetic datasets, we set $\gamma=10^{-4}$, and use the CV to select parameters $C$, $\mu$, and $\rho$ from the candidate sets $\left\{10,15,20,\cdots,60\right\}$, $\left\{10,11,12,\cdots,60\right\}$, and $\left\{2^0, 2^1, \cdots, 2^5\right\}$, respectively. 
The other parameters are identical to those in the synthetic data examples. 
As a result, Algorithm~\ref{alg:admm} returns a rank estimate $r^*=3$. Moreover,
it is important to point out that Algorithm \ref{alg:admm} can robustly estimate the number of latent factors as $3$ under \emph{any} parameter configuration selected from the candidate sets above for the CV\footnote{In this case, we do not carry out the CV procedure for the selection of parameters, but simply take parameters from the candidate sets and run Algorithm~\ref{alg:admm}.}, and \cite{ciccone2018factor} also discussed the rationality of $r^*=3$. In addition, Fig.~\ref{fig:kl} displays the changes in the value of $\Dcal_{\KL} (\Lb^*+\Sb^* \| \check{\Sigmab})$ with respect to the parameters $\rho$ and $\mu$, with $C=20$ fixed. 
Under the same simulation setup, the Alternating Minimization method described in \cite{FA_CDC_24}, which updates the sparse component $\Sb$ using coordinate descent and the low-rank component $\Lb$ using the CVX \cite{cvx,gb08}, can stably return a rank estimate of $3$ \emph{only} when $\mu$ is taken from the subset $\{24, 25, \cdots, 60\}$. Meanwhile, comparing Fig.~\ref{fig:kl} in the present paper with Fig.~4 of \cite{FA_CDC_24} which depicts $\Dcal_{\KL} (\Lb^*+\Sb^* \| \check{\Sigmab})$ as a function of $\mu$ under a fixed $C$,  we see that the refined estimate of the covariance matrix $\Sigmab^*=\Lb^*+\Sb^*$ returned by Algorithm~\ref{alg:admm} appears to stay closer to the sample covariance matrix $\check{\Sigmab}$ as measured by the KL divergence.

\begin{figure}[t]
	\centering
	\includegraphics[width=.5\linewidth]{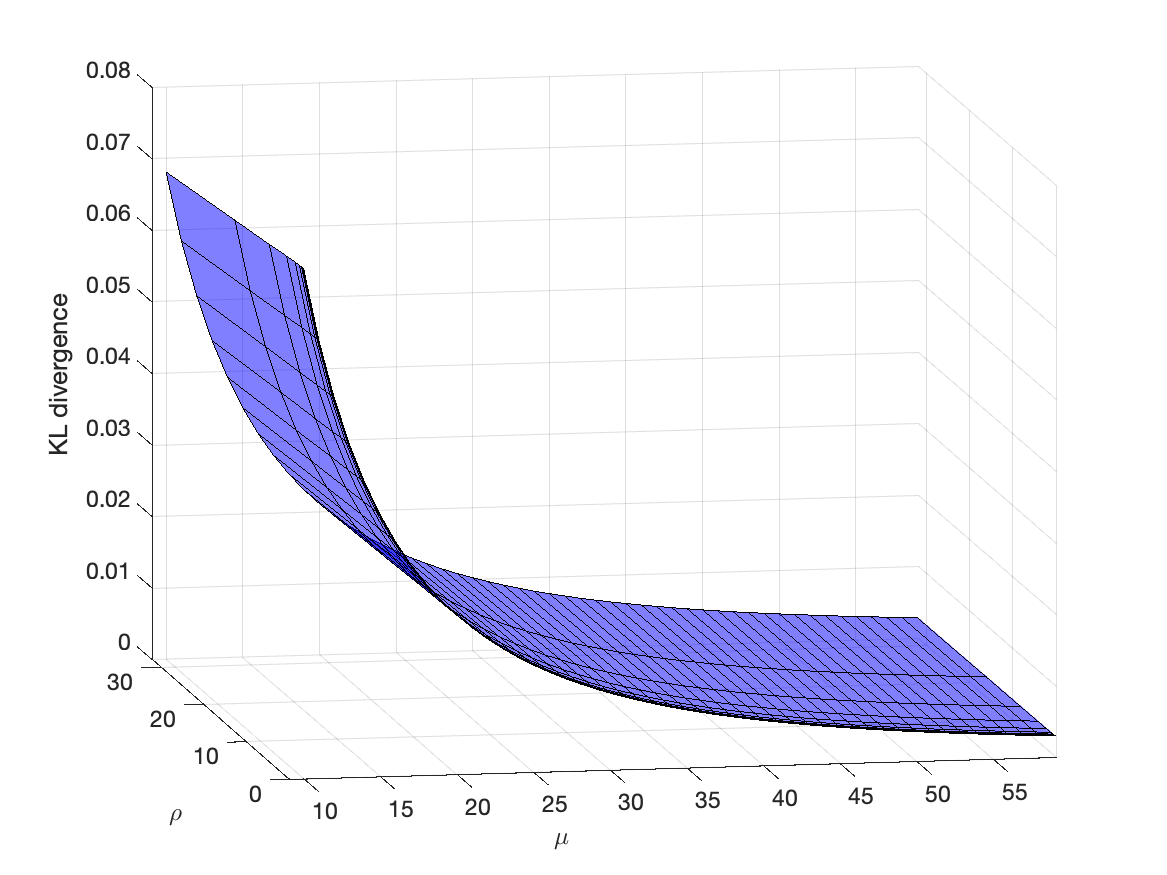}
	\caption{The value of $\Dcal_{\KL} (\Lb^*+\Sb^* \| \check{\Sigmab})$ as a function of $\rho$ and $\mu$ under a fixed $C=20$, where $(\Lb^*, \Sb^*)$ represents the convergent iterate returned by Algorithm~\ref{alg:admm}.} 
	\label{fig:kl}
\end{figure}

\section{Conclusion}\label{sec:conc}

We have formulated an optimization model for Factor Analysis which can be described in terms of low rank plus sparse matrix decomposition.
Although the objective function includes the $\ell_{0}$ norm of a nonconvex nonsmooth nature, we have established an optimality theory using a generalized notion of the KKT point called P-stationary point. 
Furthermore, we have developed an ADMM solver for the optimization problem and have provided convergence analysis of the algorithm.  
At last, results of numerical simulations have demonstrated not only the superiority of our algorithm in subspace recovery of the low-rank component,  but also its ability to explore the unknown structure of the sparse component.
In particular, the number of hidden factors can be estimated in a robust manner as shown in the real data example.

\lyn{For future work, it could be interesting to compare our approach with reweighted $\ell_1$ minimization \cite{candes2008enhancing} and other optimization models involving nonconvex relaxations of the $\ell_0$ norm.}

\appendix

We first recall the definition of a strongly convex function which can be found e.g., in \cite{boyd2004convex}.

\begin{definition}[Strongly convex function]\label{strongly_convex}
	A differentiable function $f:\Rbb^n\rightarrow\Rbb$ is strongly convex, if for any $\xb, \yb \in \Rbb^n$ there exists a constant $m>0$ such that 
\begin{equation}
		f(\yb)\geq f(\xb)+\innerprod{\nabla f(\xb)}{\yb-\xb}+(m/2)\|\yb-\xb\|^2.
\end{equation}
\end{definition}

Following Definition \ref{strongly_convex}, suppose that $\xb^*$ is a minimizer of the strongly convex function $f$ with a constant $m$. Then the inequality
\begin{equation}\label{strong_conv_optim_inequal}
		f(\xb)\geq f(\xb^*)+(m/2)\|\xb-\xb^*\|^2
\end{equation}
holds because $\xb-\xb^*$ is not a descent direction for any feasible $\xb$ and hence $\innerprod{\nabla f(\xb^*)}{\xb-\xb^*}\geq 0$.

{\noindent \it Proof of Lemma~\ref{Sufficient decrease}}.
The Hessian of $\Lcal_\rho(\Ab)$ with respect to $\Lb$ is
$$\nabla_{\Lb}^2 \Lcal_\rho(\Ab)=\mu(\Lb+\Sb)^{-1} \otimes (\Lb+\Sb)^{-1}+\rho \Ib$$
where $\otimes$ denotes the Kronecker product.
Given $\Lb+\Sb\succ 0$ and properties of the Kronecker product \cite{lancaster1985theory}, we have
$$\nabla_{\Lb}^2 \Lcal_\rho(\Ab) \succeq \rho\Ib,$$
 which means that the function $\Lcal_\rho(\Lb,\Sb^k,\Ub^k,\Vb^k;\Lambdab^k,\Thetab^k)$ is $\rho$-strongly convex with respect to $\Lb$.
Therefore, for any $\Lb^k$, the minimizer $\Lb^{k+1}$ of $\Lcal_\rho(\Lb,\Sb^k,\Ub^k,\Vb^k;\Lambdab^k,\Thetab^k)$ satisfies
		\begin{equation}\label{descent_l}
			\begin{aligned}
				&\Lcal_\rho(\Lb^{k+1},\Sb^k,\Ub^k,\Vb^k;\Lambdab^k,\Thetab^k)\\
				\leq & \Lcal_\rho(\Lb^{k},\Sb^k,\Ub^k,\Vb^k;\Lambdab^k,\Thetab^k)-\frac{\rho}{2}\|\Lb^{k+1}-\Lb^k\|_\F^2
			\end{aligned}
		\end{equation}
   in view of \eqref{strong_conv_optim_inequal}.

Next we consider the subproblem \eqref{subprob_S} which can be expressed as 
		\begin{equation}
			\Sb^{k+1}  = \underset{\Sb}{\argmin} \ h(\Sb)+C\|\Sb\|_0,
		\end{equation}
		 where 
		$$h(\Sb) := \innerprod{\mu \check{\Sigmab}^{-1}-\Thetab^k}{\Sb}-\mu\log\det(\Lb^{k+1}+\Sb)+\frac{\rho}{2}\|\Sb-\Vb^k\|_\F^2.$$
Using the fact that  $\|\cdot\|_\F^2$ is a smooth function and following Proposition~\ref{assumption_satisifies}, we can deduce that the gradient of $h(\Sb)$ is Lipschitz continuous with a Lipschitz constant $K+\rho$.
In addition by Lemma \ref{lem_lower_semiconti}, $\|\cdot\|_0$ is a lower semicontinuous function with an obvious lower bound zero.
Since we have chosen $\gamma \leq \frac{1}{K+2\rho}< \frac{1}{K+\rho}$, for any $\Sb^k$, it follows from  \cite[Lemma 2]{bolte2014proximal} that 
		\begin{equation}
			\begin{aligned}
				&h(\Sb^{k+1})+C\|\Sb^{k+1}\|_0 \\
				\leq & h(\Sb^k)+C\|\Sb^k\|_0-\frac{\frac{1}{\gamma}-K-\rho}{2}\|\Sb^{k+1}-\Sb^k\|_\F^2\\
    \leq & h(\Sb^k)+C\|\Sb^k\|_0-\frac{\rho}{2}\|\Sb^{k+1}-\Sb^k\|_\F^2,
			\end{aligned}
		\end{equation}
		namely
		\begin{equation}\label{descent_s}
			\begin{aligned}
				&\Lcal_\rho(\Lb^{k+1},\Sb^{k+1},\Ub^k,\Vb^k;\Lambdab^k,\Thetab^k)\\
				\leq & \Lcal_\rho(\Lb^{k+1},\Sb^k,\Ub^k,\Vb^k;\Lambdab^k,\Thetab^k)-\frac{\rho}{2}\|\Sb^{k+1}-\Sb^k\|_\F^2.
			\end{aligned}
		\end{equation}
		
		Notice also that  $\Lcal_\rho(\Lb^{k+1},\Sb^{k+1},\Ub,\Vb^k;\Lambdab^k,\Thetab^k)$ is $\rho$-strongly convex with respect to $\Ub$. Hence for any $\Ub^k$, we have
		\begin{equation}\label{descent_u}
			\begin{aligned}
				&\Lcal_\rho(\Lb^{k+1},\Sb^{k+1},\Ub^{k+1},\Vb^k;\Lambdab^k,\Thetab^k)\\
				\leq & \Lcal_\rho(\Lb^{k+1},\Sb^{k+1},\Ub^k,\Vb^k;\Lambdab^k,\Thetab^k)-\frac{\rho}{2}\|\Ub^{k+1}-\Ub^k\|_\F^2,
			\end{aligned}
		\end{equation}
		and similarly, 
		\begin{equation}\label{descent_v}
			\begin{aligned}
				&\Lcal_\rho(\Lb^{k+1},\Sb^{k+1},\Ub^{k+1},\Vb^{k+1};\Lambdab^k,\Thetab^k)\\
			\leq & \Lcal_\rho(\Lb^{k+1},\Sb^{k+1},\Ub^{k+1},\Vb^k;\Lambdab^k,\Thetab^k)-\frac{\rho}{2}\|\Vb^{k+1}-\Vb^k\|_\F^2.
			\end{aligned}
		\end{equation}
		
		From the update formulas \eqref{dual_update_Lambda} and \eqref{dual_update_Theta}, it can be calculated that
		\begin{equation}\label{descent_lambda}
			\begin{aligned}
  & \Lcal_\rho(\Lb^{k+1},\Sb^{k+1},\Ub^{k+1},\Vb^{k+1};\Lambdab^{k+1},\Thetab^{k+1}) \\
			= & \Lcal_\rho(\Lb^{k+1},\Sb^{k+1},\Ub^{k+1},\Vb^{k+1};\Lambdab^k,\Thetab^k) \\
				& +\frac{1}{\rho}\|\Lambdab^{k+1}-\Lambdab^k\|_\F^2+\frac{1}{\rho}\|\Thetab^{k+1}-\Thetab^k\|_\F^2.
			\end{aligned}
		\end{equation}

    Combining the formulas \eqref{descent_l}, \eqref{descent_s}, \eqref{descent_u}, \eqref{descent_v}, and \eqref{descent_lambda}, for any $\epsilon\in (0, \rho)$, we obtain
		\begin{equation}\label{descent_Sum}
			\begin{aligned}
				& \Lcal_\rho(\Ab^{k})-\Lcal_\rho(\Ab^{k+1})\\
			 \geq & \frac{\rho}{2} (\star)  - \frac{1}{\rho} \left( \|\Lambdab^{k+1}-\Lambdab^k\|_\F^2 + \|\Thetab^{k+1}-\Thetab^k\|_\F^2 \right) \\
			= &  \frac{\epsilon}{2} (\star )+	\frac{1}{\rho} \left(\|\Lambdab^{k+1}-\Lambdab^k\|_\F^2+\|\Thetab^{k+1}-\Thetab^k\|_\F^2 \right)\\
				& +\frac{\rho-\epsilon}{2} (\star)
				-\frac{2}{\rho} \left(\|\Lambdab^{k+1}-\Lambdab^k\|_\F^2+\|\Thetab^{k+1}-\Thetab^k\|_\F^2 \right),
			\end{aligned}
		\end{equation} 
		where $(\star) := \|\Lb^{k+1}-\Lb^k\|_\F^2+\|\Sb^{k+1}-\Sb^k\|_\F^2+\|\Ub^{k+1}-\Ub^k\|_\F^2+\|\Vb^{k+1}-\Vb^k\|_\F^2$.
  It remains to show that the terms in the last line of \eqref{descent_Sum} are nonnegative for some suitable choice of $\rho$ and $\epsilon$.
  To this end, we impose that
  \begin{equation}\label{cond_rho}
			\rho >  \sup_{k\geq 1} \frac{2\left( \|\Lambdab^{k+1}-\Lambdab^k\|_\F^2 + \|\Thetab^{k+1}-\Thetab^k\|_\F^2\right)^{1/2}}{(\star)^{1/2}}.
  \end{equation}	
  Due to the strict inequality, there must exist a sufficiently small $\epsilon\in (0, \rho)$ such that the left hand side of \eqref{cond_rho} can be replaced by $\rho-\epsilon$. These two strict inequalities imply that
		 $$
		 \frac{\rho-\epsilon}{2}(\star) \geq \frac{2}{\rho} \left(\|\Lambdab^{k+1}-\Lambdab^k\|_\F^2+\|\Thetab^{k+1}-\Thetab^k\|_\F^2 \right)
		 $$
   holds for all $k\geq 1$. Thus we complete the proof.

\bibliographystyle{ieeetr}
\bibliography{refs}

\begin{IEEEbiography}[{\includegraphics[width=1in,height=1.25in,clip,keepaspectratio]{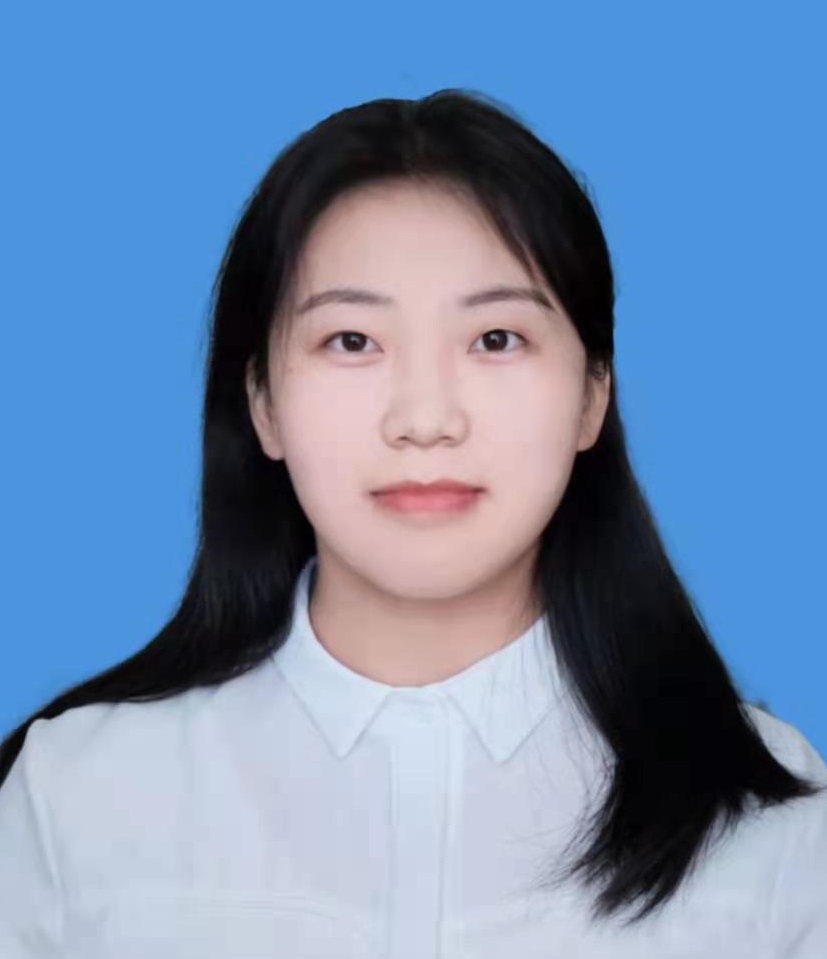}}]{Linyang Wang} (Student Member, IEEE) received the B.S. degree in Applied Mathematics from Qufu Normal University, China, in 2018, the M.S. degree in Operation Research and Control Theory from the Beijing University of Technology, in 2022. She is currently pursuing the Ph.D. degree in the School of Intelligent Systems Engineering, Sun Yat-sen University. Her current research interests include systems identification, signal processing, and sparse optimization.
\end{IEEEbiography}

\vskip -2.5\baselineskip plus -1fil

\begin{IEEEbiography}[{\includegraphics[width=1in,height=1.25in,clip,keepaspectratio]{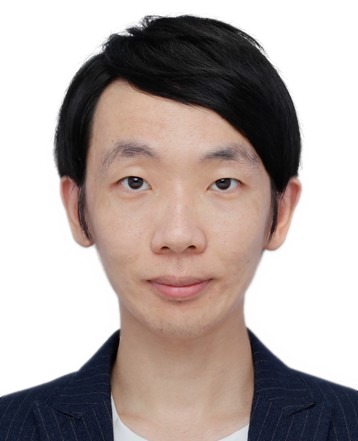}}]{Bin Zhu} (Member, IEEE) received a B.Eng. degree from Xi'an Jiaotong University, Xi'an, China in 2012 and a M.Eng. degree from Shanghai Jiao Tong University, Shanghai, China in 2015, both in control science and engineering. In 2019, he obtained a Ph.D. degree in information engineering from University of Padova, Padova, Italy, and he had a one-year postdoc position in the same university. Since December 2019, he has been working at the School of Intelligent Systems Engineering, Sun Yat-sen University, Shenzhen, China, where he is now an associate professor. His current research interest includes spectral analysis, frequency estimation, and sparsity-promoting techniques for system identification, signal processing, and machine learning.
	
	Dr. Zhu has been appointed as an Associate Editor of the Editorial Board of the International Conference on System Theory, Control and Computing (ICSTCC) since December 2023.
\end{IEEEbiography}

\vskip -2.5\baselineskip plus -1fil

\begin{IEEEbiography}[{\includegraphics[width=1in,height=1.25in,clip,keepaspectratio]{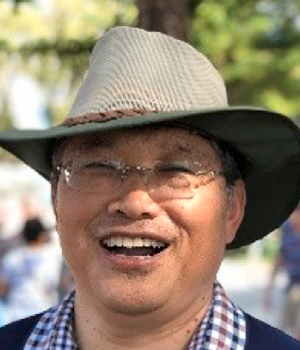}}]{Wanquan Liu} (Senior Member, IEEE) received the B.S. degree in applied mathematics from Qufu Normal University, China, in 1985, the M.S. degree in control theory and operation research from the Chinese Academy of Sciences in 1988, and the Ph.D. degree in electrical engineering from Shanghai Jiao Tong University in 1993. He is currently a Full Professor with the School of Intelligent Systems Engineering, Sun Yat-sen University, Shenzhen, China. He once held the ARC Fellowship, the U2000 Fellowship, and the JSPS Fellowship, and attracted research funds from different resources over 2.4 million dollars. His current research interests include large-scale pattern recognition, signal processing, machine learning, and control systems.
\end{IEEEbiography}

\end{document}